\documentclass[a4paper,11pt]{article}

\input{preamble.tex}

\newcommand{\TheAuthorADM}{Alberto De~Marchi}
\newcommand{\TheAffiliationADM}{Institute of Applied Mathematics and Scientific Computing, Department of Aerospace Engineering, University of the Bundeswehr Munich, Germany}
\newcommand{\TheAuthorTH}{Tim Hoheisel}
\newcommand{\TheAffiliationTH}{Department of Mathematics and Statistics, McGill University, Montréal, Canada}
\newcommand{\TheAuthorPM}{Patrick Mehlitz}
\newcommand{\TheAffiliationPM}{Department of Mathematics and Computer Science, Marburg University, Germany}
\newcommand{\TheTitle}{Augmented Lagrangian methods for fully convex composite optimization}

\newcommand{\TheKeywords}{%
	Convex composite optimization\keywordsAnd%
	Augmented Lagrangian framework\keywordsAnd%
	Proximal point algorithm\keywordsAnd%
	Convergence properties\keywordsAnd%
	Multiplier safeguarding%
}
\newcommand{\TheMSCClasses}{%
	\amsmscLink{49M37}\keywordsAnd%
	\amsmscLink{65K05}\keywordsAnd%
	\amsmscLink{90C25}\keywordsAnd%
	\amsmscLink{90C30}\keywordsAnd%
	\amsmscLink{90C46}
}%
\newcommand{\TheAbstract}{%
	This paper is concerned with augmented Lagrangian methods 
	for the treatment of fully convex composite optimization problems.
	We extend the classical relationship between augmented Lagrangian methods 
	and the proximal point algorithm to the inexact and safeguarded scheme
	in order to state global primal-dual convergence results.
	Our analysis distinguishes the regular case, where a stationary minimizer exists, 
	and the irregular case, where all minimizers are nonstationary.
	Furthermore, we suggest an elastic modification of the standard safeguarding scheme which preserves
	primal convergence properties while guaranteeing convergence of the dual sequence to a
	multiplier in the regular situation.
	Although important for nonconvex problems, 
	the standard safeguarding mechanism leads to weaker convergence guarantees for convex problems 
	than the classical augmented Lagrangian method.
	Our elastic safeguarding scheme combines the advantages of both while avoiding their shortcomings.
}

\begin{document}
	
\title{\bfseries \TheTitle}
\author{\TheAuthorADM\thanks{\TheAffiliationADM. \emailLink{alberto.demarchi@unibw.de}, \orcidLink{0000-0002-3545-6898}.}%
	\and%
	\TheAuthorTH\thanks{\TheAffiliationTH. \emailLink{tim.hoheisel@mcgill.ca}, \orcidLink{0000-0002-0782-6302}.}%
	\and%
	\TheAuthorPM\thanks{\TheAffiliationPM. \emailLink{mehlitz@uni-marburg.de}, \orcidLink{0000-0002-9355-850X}.}%
}
\date{\today}

\maketitle

\begin{center}
	\emph{Dedicated to Christian Kanzow on the occasion of his 60th birthday}
\end{center}

\begin{abstract}
	\TheAbstract
	\medskip
	
	\keywords{\TheKeywords}
	\medskip
	
	\mscclasses{\TheMSCClasses}
\end{abstract}

\tableofcontents

\section{Introduction}\label{sec:intro}

Consider a \emphdef{composite problem} of the form
\begin{equation}
	\minimize_{x}\,
	\Phi(x) \coloneqq f(x) + g( c(x) ) ,
	\tag{P}\label{eq:P}
\end{equation}
where $\func{f}{\XX}{\R}$ is continuously differentiable, 
$\func{g}{\YY}{\Rinf \coloneqq \R \cup \{+\infty\}}$ is lower semicontinuous, proper, convex, and 
$\func{c}{\XX}{\YY}$ is continuously differentiable. 
We assume throughout that \eqref{eq:P} is feasible, i.e., $\dom \Phi=c^{-1}(\dom g)$ is nonempty and, for algorithmic considerations, 
that $g$ is prox-friendly, i.e., its proximal operator can be computed (at least inexactly) with reasonable effort. 
Let us note that all results in this paper remain true if $\XX$ and $\YY$ are replaced by arbitrary finite-dimensional Hilbert spaces.
Problem \eqref{eq:P} is, due to convexity of $g$, referred to as a \emphdef{convex composite problem} 
or an \emphdef{extended nonlinear program} in the literature to which many people have contributed,
most prominently, Burke et al.\ \cite{burke1985descent,burke1995gauss,burke2013epiconvergent,burke1992optimality}
and Rockafellar \cite{rockafellar2000extended,rockafellar2023augmented,rockafellar2023convergence}.

In this study, we focus on the case where $f$ and $g\circ c$, and hence $\Phi$, are convex, 
i.e., we study \emphdef{fully convex composite problems}. 
We refer the reader to \cref{ass:convexP} in \cref{sec:what_is_convex} for minimal assumptions that guarantee this. 
Although this limits the range of the framework to a degree, the model problem \eqref{eq:P} covers diverse applications.
The obvious ones are (regularized) linear least-squares and constrained convex programming (including semidefinite programming)
where $f$ is the objective function, $g$ is an indicator function, and $c$ models the constraint functions.
Some non-generic applications are matrix optimization problems involving variational Gram functions \cite{BurkeHoheiselGuan2019,JalaliFazelXiao2017},
where $g$ is the support function of a compact set of positive semidefinite matrices 
and $c(X)\coloneqq X X^\top$ for any input matrix $X$.
The fully convex version of \eqref{eq:P} in which we are interested has been studied from a theoretical viewpoint by Bo\c{t} et al.\ \cite{Bot2010,bot2007constraint, bot2009generalized}
and Burke et al.\ \cite{BurkeHoheiselGuan2019} who provide complete conjugacy and subdifferential results, as well as a plethora of applications.
Doikov and Nesterov \cite{doikov2022highorder} discuss algorithmic approaches.

In order to solve composite problems numerically, 
the augmented Lagrangian method (ALM) turned out to deliver a suitable algorithmic framework,
see, e.g., \cite{DhingraKhongJavanovic2019,HangSarabi2021,hang2023convergence,rockafellar2023convergence,rockafellar2024generalizations}
for some recent contributions,
as the emerging subproblems can be tackled via proximal-gradient methods,
for example \cite{demarchi2022proximal,KanzowMehlitz2022}.

Following the literature, ALMs enjoy convincing local convergence
properties in the presence of suitable second-order conditions,
see, e.g., \cite{HangSarabi2021,hang2023convergence,rockafellar2023convergence}
for such results addressing convex composite optimization problems.
However, in the convex composite situation, 
global convergence properties, i.e., general assertions about primal
accumulation points, are difficult to obtain.
This has been illustrated in \cite{kanzow2017example}
by means of a simple one-dimensional convex optimization problem
whose feasible set is described via a nonconvex constraint.
The gap of global convergence guarantees can be closed when relying on so-called \emphdef{safeguarding} techniques
which keep the dual variables in the augmented Lagrangian scheme bounded,
see, e.g., \cite{birgin2014practical} for a general view
and the recent contributions \cite{demarchi2023constrained,DeMarchiMehlitz2024}
for studies addressing general composite optimization problems.
In numerical practice, safeguarded ALMs are typically preferred
over the classical scheme due to their superior global properties.

The convergence analysis of ALMs is closely related
to the one of the famous proximal point algorithm (PPA) as the dual iterates typically
turn out to correspond to the sequence generated by a (potentially inexact)
PPA with variable stepsizes when applied to a suitable dual optimization problem.
First results in this direction have been obtained 
by Rockafellar in \cite{rockafellar1973dual,rockafellar1976augmented}
and address smooth convex optimization problems with inequality constraints.
However, an extension to convex composite problems is also possible, 
see \cite{rockafellar2023convergence}
where, however, this connection has not been used to infer
global convergence properties of the ALM.
In fact, the focus of \cite{rockafellar2023convergence} is on local properties,
see \cite{rockafellar2023augmented} as well.
Let us note that, in the safeguarded situation, the dual iterates correspond to the sequence
generated by a generalized PPA, see \cite{kanzow2017generalized},
which also does not exploit these observations to distill
global convergence properties of the safeguarded ALM.
The purpose of this paper is to complement the results 
in \cite{kanzow2017generalized,rockafellar2023convergence}.

\paragraph*{Contributions}

Now we are in a position to outline the main contributions of this paper.
To start,
we \emph{analyze the fully convex composite model \eqref{eq:P} 
and extend the well-known (dual) connection between 
the ALM and the PPA} to it.
We give a generalized relationship between ALM and PPA 
when applied to the broad problem class \eqref{eq:P} 
capturing inexact solves of emerging subproblems.
This relation is then used to infer \emph{two types of convergence results for the classical ALM}.
First, we investigate the behavior of the sequence of dual iterates via the properties of PPA.
Second, with the aid of these insights,
it is shown that all accumulation points of the sequence of primal iterates 
are global minimizers of \eqref{eq:P}.
These results complement those in the recent papers \cite{andrews2025augmented,rockafellar2023convergence}
in a straightforward way and provide the base for subsequent considerations.
In \cite{andrews2025augmented}, the authors consider a more specific convex model problem \eqref{eq:P} 
where the role of $g$ is played by an indicator function.
In contrast, \eqref{eq:P} may be infeasible in \cite{andrews2025augmented} while we will assume throughout
that \eqref{eq:P} possesses a (not necessarily stationary) minimizer.
The focus of \cite{rockafellar2023convergence}, which is concerned with convex composite problems, 
is on proving full convergence of the primal ALM iterates
in the presence of so-called \emphdef{strong variational convexity} which amounts to a second-order condition in practice, 
see, e.g., \cite{KhanhMordukhovichPhat2023}, which is not required in this paper.
Particularly, we (separately) address the situation where all minimizers of \eqref{eq:P}
are nonstationary, which is a setting that is not at all considered 
in \cite{rockafellar2023convergence}.

Second, we \emph{extend the PPA-based analysis to safeguarded ALMs},
and to the best of our knowledge, our findings are largely new.
Therefore, we relate the dual update of the method with
a generalized PPA 
which can be also interpreted as an instance 
of the backward-backward splitting method from \cite{banert2014backward}
whenever the safeguarding is implemented via projection onto the safeguarding set.
This rather natural property allows us to distill finer convergence results
than postulated in \cite{kanzow2017generalized}, 
and we are in a position to incorporate inexact evaluations of the proximal operator.
These findings allow us to infer primal-dual convergence results for the safeguarded ALM 
which are similar to the ones obtained for the classical ALM.
However, one has to be slightly more careful with the update of the penalty parameter 
in the safeguarded setting.
The fact that accumulation points of the primal sequence are global minimizers of \eqref{eq:P}
whenever the update of the penalty parameter is somewhat reasonable 
is well known and can be found,
e.g., in \cite{birgin2014practical,DeMarchiMehlitz2024,KanzowKraemerMehlitzWachsmuthWerner2025}.
Whenever the safeguarding set is too small, however,
it may happen that the dual sequence does not capture Lagrange multipliers in case of existence,
and this also affects the behavior of the sequence of primal iterates, see \cite{andreani2008augmented}.

The third contribution of the paper
overcomes this shortcoming:
we suggest to adopt an \emph{elastic safeguarding},
a slight modification of the standard technique  
which allows for an enlargement of the safeguarding set in certain situations.
With this mechanism we can infer that the sequence of dual iterates of the associated ALM
always converges to a Lagrange multiplier in case of existence,
while keeping the desirable global convergence properties of the primal sequence.
Hence, this approach combines the advantages of both classical and (rigidly) safeguarded ALMs. 
To the best of our knowledge, the latter findings are completely new,
and we conjecture that, at least in parts, they can be extended to
more general composite optimization problems.

\paragraph*{Roadmap}

After introducing some notation and recalling preliminary results in \cref{sec:preliminaries}, 
we will present some basics about fully convex composite problems in \cref{sec:what_is_convex}.
The connection between classical ALM and PPA is recalled in \cref{sec:classical_ALM}, 
before investigating safeguarded ALMs in \cref{sec:safeguarded_ALM}.
The proposed elastic safeguarding technique is
discussed in \cref{sec:safeguarded_ALM_elastic}.
\cref{sec:summary_and_numerics} presents some numerical experiments
to illustrate our findings.
Finally, some remarks and open questions are collected in \cref{sec:conclusions}.

For the classical ALM from \cref{alg:ALMclassic}, the convergence guarantees in \cref{thm:global_convergence,thm:global_convergence_irregular} are obtained 
exploiting the (relatively standard) ``equivalence'' between PPA and ALM iterates (\cref{thm:inexactPPA:saddle}) and the characterization of PPA (\cref{lem:PPA}).
Let us mention that neither \cref{lem:PPA} nor \cref{thm:inexactPPA:saddle,thm:global_convergence} are new per se,
as these findings merely complement results from the literature.
However, we decided to include them to keep the paper self-contained.
Furthermore, proofs are presented to set up the stage 
for the considerations in \cref{sec:convergence_rigid_safeguard,sec:convergence_elastic_safeguard}
where related techniques are used to obtain the main results of the paper.
Indeed, for the ALM with rigid safeguard from \cref{alg:ALMsafeguarded},
\cref{lem:PPA} is replaced by \cref{lem:dual_sequences_safeguarded}, 
which inspects the ``generalized'' PPA \eqref{eq:generalized_PPA} arising from the safeguarding mechanism \cite{kanzow2017generalized}.
Then, patterning the (dual) analysis for the classical ALM, 
\cref{thm:global_convergence_safeguarded_alm_dual,thm:global_convergence_irregular_safeguarded} provide slightly weaker guarantees, 
and so does \cref{cor:global_convergence_safeguarded_alm_primal}, 
due to the arbitrary choice of the safeguarding set.
Whenever the penalty parameter is updated according to a certain well-established rule, see, e.g., \cite{birgin2014practical},
convergence guarantees can be strengthened, 
see \cref{cor:global_convergence_safeguarded_alm_primal_HPR} and \cref{thm:global_convergence_irregular_safeguarded_primal}.
Allowing the safeguard to be elastic,
our analysis recovers convergence guarantees on par with classical ALM whenever stationary minimizers exist
(\cref{thm:global_convergence_special_safeguarded_alm}),
by overcoming the limitation of an arbitrarily prescribed safeguarding set.
Since \cref{thm:global_convergence_special_safeguarded_alm} provides stronger guarantees than \cref{thm:global_convergence_safeguarded_alm_dual}, 
the elastic safeguarding of \cref{alg:ALMsafeguarded_growing} should be preferred over the rigid one of \cref{alg:ALMsafeguarded}, 
even in the regular case.
An analogous verdict is valid for the irregular case, 
with \cref{thm:global_convergence_special_safeguarded_alm_irregular} surpassing \cref{thm:global_convergence_irregular_safeguarded}.
Interestingly, \cref{thm:global_convergence_special_safeguarded_alm_irregular} even outnumbers 
the comparatively weak convergence guarantees for the classical ALM in the irregular setting (\cref{thm:global_convergence_irregular}),
an achievement attributed to safeguarding.
Hence, \cref{alg:ALMsafeguarded_growing} surpasses both \cref{alg:ALMclassic,alg:ALMsafeguarded} 
in terms of primal-dual global convergence guarantees.
\cref{tab:summary} presents an overview of the findings presented in the paper.

\begin{table}[tbh]
	\centering%
	\begin{tabular}{cccc}
		\hline
		ALM & regular case & irregular case & \\
		\hline\hline
		classical & \cref{thm:global_convergence} & \cref{thm:global_convergence_irregular} & using \cref{lem:PPA,thm:inexactPPA:saddle} \\
		\hline
		& \cref{thm:global_convergence_safeguarded_alm_dual}
		& \multirow{2}{*}{\cref{thm:global_convergence_irregular_safeguarded}} & \multirow{2}{*}{using \cref{lem:dual_sequences_safeguarded}} \\
		rigid safeguard & \cref{cor:global_convergence_safeguarded_alm_primal} & &\\
		& \cref{cor:global_convergence_safeguarded_alm_primal_HPR} & \cref{thm:global_convergence_irregular_safeguarded_primal} & for \cref{set:algencan_penalty_parameter} \\
		\hline
		elastic safeguard & \cref{thm:global_convergence_special_safeguarded_alm} &  \cref{thm:global_convergence_special_safeguarded_alm_irregular} & \\
		\hline
	\end{tabular}%
	\caption{Summary of results on classical and safeguarded ALMs for \eqref{eq:P}.}%
	\label{tab:summary}%
\end{table}

Focusing at the situation where all minimizers are stationary,
the attentive reader may wonder as to
why to construct an ALM with elastic safeguard to just recover the guarantees of the classical ALM.
In order to make this clear, 
one has to observe that the latter scheme fails to have meaningful global convergence properties 
(in the sense of arbitrary initial points) already 
for general convex composite problems, see, e.g., \cite{kanzow2017example} again,
whereas the former maintains this useful property 
(proving this fact is simple, based on results for the rigid safeguard, such as \cite[Thm~6.2]{birgin2014practical}).
Thus, the elastic safeguard enables good behavior in both the convex and nonconvex settings.
The latter is important as, in numerical practice, it might be an involved or even impossible task
to check convexity of some arbitrary given optimization problem.

\section{Notation and preliminaries}
\label{sec:preliminaries}

This section introduces notational conventions, preliminary results on PPA, 
the Lagrangian framework in convex composite optimization, and other useful tools.

\subsection{Fundamentals}
\label{sec:notation}

For notation, we mainly follow the classical book \cite{rockafellar1998variational}. 

We denote the real numbers by $\R$ and set $\Rinf\coloneqq\R\cup\{+\infty\}$ .
We let $\R_+$ and $\R_-$ be the set of nonnegative and nonpositive real numbers, respectively.
The set of positive integers is denoted by $\N$, and we set $\N_0 \coloneqq \N \cup \{0\}$.
We equip the appearing Euclidean spaces with inner product $\innprod{\cdot}{\cdot}$ with the associated Euclidean norm $\|\cdot\|$.
We will make use of the so-called \emphdef{four-point inequality}, see, e.g., \cite[Eq.\ (3)]{banert2014backward},
given by
\begin{equation}\label{eq:four_point_inequality}
	\forall x,y,z,w\in\YY\colon\quad
	\norm{x-y}^2 + \norm{z-w}^2
	\leq
	\norm{x-z}^2 + \norm{x-w}^2 + \norm{y-z}^2 + \norm{y-w}^2
	.
\end{equation}

Given a set $C\subseteq \YY$ and a point $\bar z\in\YY$, 
we use $\bar z + C \coloneqq C+\bar z\coloneqq \{z+\bar z\in\YY \,|\, z\in C\}$ for brevity.
The notation $\{z^k\}_{k\in K}$ represents a sequence indexed by elements of the set $K\subseteq \N$, 
and we write $\{z^k\}_{k\in K}\subseteq C$ to indicate that $z^k \in C$ for all indices $k\in K$.
Whenever clear from context, we may simply write $\{z^k\}$ to indicate $\{z^k\}_{k\in\N}$. 
Notation $z^k\to_K \bar z$ is used to express convergence of $\{z^k\}_{k\in K}$ to $\bar z$.
If $n=1$, we use $\{t_k\}$ to emphasize that we are dealing with sequences of scalars
(using subscripts instead of superscripts), 
and subsequences are denoted similarly as mentioned above.
Given a scalar $t\in\R$ and a sequence $\{t_k\}\subseteq\R$, the notation $t_k \to t$ means that $t_k$ converges to $t$; 
convergence from above is denoted $t_k \downto t$; $t_k \downtoneq t$ means that additionally $t_k \ne t$ for all $k\in\N$.

Below, we present a helpful result which can be used to verify full convergence of sequences.
It is a finite-dimensional counterpart of \cite[Lem.\ 5]{banert2014backward}.
As we slightly weakened the assumptions, a proof is included for the purpose of completeness.
\begin{mybox}
\begin{lemma}\label{lem:Banerts_lemma}
	Let $\{y^k\}\subseteq\YY$ be a sequence, and let $Y\subseteq\YY$ be a nonempty set.
	Assume that each accumulation point of $\{y^k\}$ belongs to $Y$
	and that, for each $y\in Y$, $\{\norm{y^k-y}\}$ converges.
	Then there exists $\bar y\in Y$ such that $y^k\to\bar y$.
\end{lemma}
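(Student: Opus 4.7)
The plan is to combine the Fejér-type monotonicity-in-distance hypothesis with the Bolzano--Weierstrass theorem available in finite dimensions; no use of the four-point inequality \eqref{eq:four_point_inequality} should be needed, which is one of the simplifications relative to the infinite-dimensional version in \cite{banert2014backward}.

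First I would establish boundedness of $\{y^k\}$. Since $Y$ is assumed nonempty, pick any $\hat y\in Y$. By hypothesis $\{\norm{y^k-\hat y}\}$ is a convergent, hence bounded, sequence of real numbers, so $\{y^k\}$ is bounded in $\YY$. Because $\YY$ is finite-dimensional, Bolzano--Weierstrass yields at least one accumulation point $\bar y$ of $\{y^k\}$, and by the first assumption of the lemma $\bar y\in Y$.

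Next I would promote this subsequential limit to a full limit. Since $\bar y\in Y$, the hypothesis applied to $y\coloneqq\bar y$ tells me that $\{\norm{y^k-\bar y}\}$ converges to some $\ell\in\R_+$. On the other hand, along the subsequence $\{y^{k_j}\}_{j\in\N}$ realizing $\bar y$ as an accumulation point, $\norm{y^{k_j}-\bar y}\to 0$. By uniqueness of the limit, $\ell=0$, which gives $y^k\to\bar y$ and proves the claim.

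The only point requiring any care is the opening move: invoking the hypothesis for some $\hat y\in Y$ requires $Y\neq\emptyset$, which is exactly why that assumption appears in the statement (note that under the stated hypotheses $Y$ could a priori be disjoint from the closure of $\{y^k\}$, but boundedness together with the accumulation-point condition rules this out once an accumulation point is produced). I do not foresee a genuine obstacle here; the real subtlety in the underlying Banert-type result lies in the infinite-dimensional case where one must substitute weak for strong accumulation and invoke the four-point inequality to upgrade subsequential to full (weak) convergence, none of which is needed here.
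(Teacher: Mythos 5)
Your proof is correct and follows essentially the same route as the paper's: establish boundedness from the convergence of $\{\norm{y^k-y}\}$ for one $y\in Y$, extract an accumulation point $\bar y\in Y$, and use the convergence of $\{\norm{y^k-\bar y}\}$ together with the vanishing subsequence to conclude the limit is $0$. The paper phrases this as uniqueness of accumulation points before concluding full convergence, but the key step is identical, and your version is, if anything, slightly more direct.
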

\end{mybox}
\begin{proof}
	As $Y$ is nonempty, we find $y\in Y$,
	and the convergence of $\{\norm{y^k-y}\}$
	immediately yields the boundedness of $\{y^k\}$ via the inverse triangle inequality.
	Hence, $\{y^k\}$ possesses accumulation points.
	We now prove that $\{y^k\}$ possesses a unique accumulation point $\bar y$
	which then already must be the limit of $\{y^k\}$,
	and $\bar y\in Y$ follows from the assumptions.
	
	Suppose that $\hat y,\check y\in Y$ are accumulation points of $\{y^k\}$.
	Then, for each $k\in\N$,
	we have $\norm{\hat y-\check y}\leq\norm{\hat y-y^k}+\norm{y^k-\check y}$.
	By assumption, the sequences $\{\norm{\hat y-y^k}\}$ and $\{\norm{y^k-\check y}\}$
	converge, and as $\hat y$ and $\check y$ are accumulation points of $\{y^k\}$,
	their limit must be $0$.
	Hence, $\norm{\hat y-y^k}+\norm{y^k-\check y}\to 0$ follows,
	which yields $\hat y=\check y$.
\end{proof}

For a given nonempty, closed, convex set $C\subseteq \YY$, 
the \emphdef{distance} $\func{\dist_C}{\YY}{[0,\infty)}$ of $C$ is $\dist_C(y)\coloneqq \inf_{z\in C} \norm{z-y}$,
and the uniquely determined point $z_y\in C$ which satisfies $\dist_C(y)=\norm{z_y-y}$
is denoted by $\proj_C(y)\coloneqq z_y$.
Mapping $\func{\proj_C}{\YY}{\YY}$ is called the \emphdef{projection} of $C$.
With $\func{\indicator_C}{\YY}{\Rinf}$ we denote the \emphdef{indicator} of $C$, 
given by $\indicator_C(z)=0$ if $z\in C$ and $\indicator_C(z)=\infty$ otherwise.
We refer to $C^\circ\coloneqq\{y\in\YY\,|\,\forall z\in C\colon\,\innprod{y}{z}\leq 0\}$ as the \emphdef{polar cone} of $C$
and note that it is a nonempty, closed, convex cone.

The \emphdef{domain} of a function $\func{\varphi}{\YY}{\R\cup\{\pm\infty\}}$ is denoted by $\dom \varphi \coloneqq \{ z\in\YY \,|\, \varphi(z)<\infty \}$.
We say that $\varphi$ is \emphdef{proper} if $\dom \varphi$ is a nonempty set on which $\varphi$ is finite.
Moreover, $\varphi$ is \emphdef{lower semicontinuous} (lsc) if $\varphi(\bar{z}) \leq \liminf_{z\to\bar{z}} \varphi(z)$ is valid for all $\bar{z}\in\YY$.
The set $\epi \varphi \coloneqq \{ (z,\alpha) \in \YY \times \R \,|\, \alpha\geq \varphi(z) \}$ is called the \emphdef{epigraph} of $\varphi$.
Note that $\varphi$ is lsc if and only if $\epi \varphi$ is closed.
Whenever $\varphi$ is convex and $\bar z\in\dom \varphi$ is a point where $\varphi$ is finite,
\[
	\partial \varphi(\bar z)\coloneqq \{y\in\YY\,|\,\forall z\in\dom \varphi\colon\,\varphi(z)\geq \varphi(\bar z) + \innprod{y}{z-\bar z}\}
\]
is called the \emphdef{subdifferential} of $\varphi$ at $\bar z$.
Partial subdifferentiation will be denoted in canonical way using appropriate subscripts.
For $\varphi\coloneqq\indicator_C$, where $C\subseteq\YY$ is nonempty, closed, and convex,
and $\bar z\in C$, $N_C(\bar z)\coloneqq\partial\indicator_C(\bar z)$
is the \emphdef{normal cone} to $C$ at $\bar z$.

We remind also that the \emphdef{conjugate} of a proper, lsc, convex function $\func{\varphi}{\YY}{\Rinf}$ 
is the proper, lsc, convex function $\func{\varphi^\conj}{\YY}{\Rinf}$ defined as $\varphi^\conj(y) \coloneqq \sup_{z\in\YY} \{ \innprod{z}{y}-\varphi(z)\}$, 
and that one then has $y \in\partial \varphi(z)$ if and only if $z \in\partial \varphi^\conj(y)$.
The \emphdef{support} $\func{\support_C}{\YY}{\Rinf}$ of a nonempty, closed, convex set $C\subseteq \YY$ is the convex function
given by $\support_C(y)\coloneqq \indicator^\conj_C(y)= \sup_{z\in C} \innprod{y}{z}$.

Let $\func{\varphi}{\YY}{\Rinf}$ be proper, lsc, and convex.
For $\gamma>0$, the proper, lsc, convex function $\func{\varphi^\gamma}{\YY}{\R}$ given by
\[
	\varphi^\gamma(z)
	\coloneqq
	\inf_w\left\{\varphi(w)+\frac1{2\gamma}\norm{w-z}^2\right\}
\]
is referred to as the \emphdef{Moreau envelope} of $\varphi$ for $\gamma$.
Observe that $\varphi^\gamma$ is continuously differentiable, 
see \cite[Prop.\ 12.29]{BauschkeCombettes2011}.
Noting that $\varphi + \nicefrac12\norm{\cdot - z}^2$ is uniformly convex for each $z\in\YY$,
there exists a uniquely determined point $w_z\in\YY$ such that
$\varphi^1(z)=\varphi(w_z)+\nicefrac12\norm{w_z-z}^2$,
and we set $\prox_\varphi(z)\coloneqq w_z$.
The associated mapping $\func{\prox_{\varphi}}{\YY}{\YY}$ 
is called the \emphdef{proximal mapping} of $\varphi$.
Note that $\varphi^\gamma$ is the marginal function associated with $\prox_{\gamma\varphi}$ for each $\gamma>0$.
We will exploit the relation
\begin{equation}\label{eq:three_point_inequality}
	\forall z,w\in\YY\colon\quad
	\varphi(\prox_\varphi(z))
	\leq
	\varphi(w) + \frac{1}{2} \norm{z-w}^2 - \frac{1}{2} \norm{z-\prox_\varphi(z)}^2 - \frac{1}{2} \norm{w-\prox_\varphi(z)}^2,
\end{equation}
which can be found in \cite[Eq.\ (4)]{banert2014backward}.

Given a function $\func{\Theta}{\XX}{\YY}$ and $y\in\YY$, $\func{\innprod{y}{\Theta}}{\XX}{\R}$ defined via
$\innprod{y}{\Theta}(x)\coloneqq\innprod{y}{\Theta(x)}$ is the \emphdef{scalarization function} of $\Theta$ given $y$.
Whenever $\Theta$ is differentiable at $\bar x\in\XX$, $\Theta'(\bar x)\in\R^{m\times n}$ denotes its \emphdef{Jacobian}.
In the case $m\coloneqq 1$, $\nabla \Theta(\bar x)\coloneqq \Theta'(\bar x)^\top$ represents the \emphdef{gradient} of $\Theta$.
Accordingly, partial derivatives are denoted in canonical way.  

\subsection{Recession cones and functions}

For a nonempty, closed, convex set $C\subseteq\YY$, we use
\[
	C_\infty \coloneqq \{v\in\YY\,|\,\forall z\in C\colon\,z+v\in C\}
\]
to denote the \emphdef{recession cone} of $C$ which is a nonempty, closed, convex cone,
see, e.g., \cite[Thms~8.1 and~8.2]{Rockafellar1970}.
Furthermore, for a proper, lsc, convex function $\func{\varphi}{\YY}{\Rinf}$, 
we refer to the proper, lsc, convex function $\func{\varphi_\infty}{\YY}{\Rinf}$
given via
$
	\epi\varphi_\infty \coloneqq (\epi\varphi)_\infty
$
as the \emphdef{recession function} of $\varphi$. 

The following lemma shows that a convex function and its Moreau envelope share the same recession function.
Its mainly technical proof is presented in \cref{sec:appendix}.
\begin{mybox}
	\begin{lemma}\label{lem:recession_function_of_moreau_envelope}
		Let $\func{\varphi}{\YY}{\Rinf}$ be proper, lsc, and convex.
		Then, for each $\gamma>0$, we have $(\varphi^\gamma)_\infty=\varphi_\infty$, i.e.,
		the recession functions of $\varphi$ and its Moreau envelope $\varphi^\gamma$ are the same.
	\end{lemma}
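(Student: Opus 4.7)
The plan is to reduce the claim to an identity between recession cones of epigraphs, and then to establish that identity via Minkowski-sum calculus. Setting $q \coloneqq \tfrac{1}{2\gamma}\|\cdot\|^2$, the Moreau envelope is exactly the inf-convolution of $\varphi$ and $q$, and coercivity of $q$ ensures the infimum is attained at $w = \prox_{\gamma\varphi}(z)$; by standard epigraphic calculus this exactness yields $\epi \varphi^\gamma = \epi \varphi + \epi q$. Since $\epi(\varphi^\gamma)_\infty = (\epi \varphi^\gamma)_\infty$ and $\epi \varphi_\infty = (\epi \varphi)_\infty$ by the very definition of the recession function, the claim is equivalent to showing $(\epi \varphi + \epi q)_\infty = (\epi \varphi)_\infty$.

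For this I would invoke the classical sum rule for recession cones of closed convex sets: if $A, B$ are nonempty, closed, and convex with $A_\infty \cap (-B_\infty) = \{0\}$, then $(A+B)_\infty = A_\infty + B_\infty$. With $A = \epi \varphi$ and $B = \epi q$, the superlinear growth of $q$ gives $(\epi q)_\infty = \{0\}\times\R_+$, so $-(\epi q)_\infty = \{0\}\times\R_-$. Any element $(v,r)$ of $(\epi \varphi)_\infty \cap (\{0\}\times\R_-)$ must then satisfy $v=0$ and $\varphi_\infty(0) \leq r \leq 0$; since $\varphi_\infty(0) = 0$ for every proper, lsc, convex function, the intersection reduces to $\{(0,0)\}$, the sum rule applies, and one obtains $(\epi \varphi^\gamma)_\infty = (\epi \varphi)_\infty + \{0\}\times\R_+$. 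This last sum collapses to $(\epi \varphi)_\infty$ because that cone, being itself an epigraph, already absorbs the vertical ray $\{0\}\times\R_+$, completing the argument.

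The main obstacle is really just the verification of the no-conflict condition, which boils down to the near-tautological $\varphi_\infty(0)=0$; the remainder is bookkeeping. As a self-contained alternative I would work directly with the limit formula $\psi_\infty(v) = \lim_{t\to\infty}\tfrac{\psi(z_0+tv)-\psi(z_0)}{t}$ available at any $z_0 \in \dom \psi$ for proper, lsc, convex $\psi$: the inequality $(\varphi^\gamma)_\infty \leq \varphi_\infty$ is immediate from $\varphi^\gamma \leq \varphi$, while the reverse inequality can be read off from the conjugate identity $(\varphi^\gamma)^\conj = \varphi^\conj + \tfrac{\gamma}{2}\|\cdot\|^2$, which makes $\dom(\varphi^\gamma)^\conj$ and $\dom\varphi^\conj$ coincide and forces $\varphi^\gamma$ and $\varphi$ to have identical horizon slopes.
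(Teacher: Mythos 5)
Your argument is correct, and it reaches the conclusion by a genuinely different route than the paper. The paper's proof represents $(\varphi^\gamma)_\infty$ as the pointwise limit of the epi-multiples $t\epimult\varphi^\gamma$ as $t\downtoneq 0$ (via \cite[Thm~8.5]{Rockafellar1970}), rewrites $t\epimult\varphi^\gamma$ as the infimal convolution of $t\epimult\varphi$ with a rescaled quadratic, and then invokes the McLinden--Bergstrom theorem on epi-convergence of infimal convolutions. You instead exploit the exactness of the Moreau infimal convolution (the infimum is attained at the prox point) to obtain the Minkowski-sum identity $\epi\varphi^\gamma=\epi\varphi+\epi q$ with $q\coloneqq\nicefrac{1}{2\gamma}\norm{\cdot}^2$, and then apply the classical recession-cone sum rule \cite[Cor.~9.1.1]{Rockafellar1970}; this avoids the epi-convergence machinery entirely and is the more elementary argument. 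Notably, both proofs hinge on exactly the same compatibility condition, namely $(\epi\varphi)_\infty\cap(\{0\}\times\R_-)=\{(0,0)\}$, verified by the same one-line contradiction. Your second, conjugacy-based alternative is also sound and arguably the slickest: from $(\varphi^\gamma)^\conj=\varphi^\conj+\tfrac{\gamma}{2}\norm{\cdot}^2$ one gets $\dom(\varphi^\gamma)^\conj=\dom\varphi^\conj$, and since the recession function of a proper, lsc, convex function is the support function of the domain of its conjugate (\cite[Thm~13.3]{Rockafellar1970}), equality of the recession functions follows at once; this route also dovetails with \cref{lem:horizon}\,\ref{item:horizon_explicit}, which is how the lemma is ultimately used in the paper. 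The only point worth spelling out in a final write-up is the exactness step $\epi(\varphi\square q)=\epi\varphi+\epi q$ (in general only the strict epigraphs add), but your justification via attainment of the infimum is the right one.
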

\end{mybox}

In the course of the paper, for a proper, lsc, convex function $\func{\varphi}{\YY}{\Rinf}$,
we make use of
\[
	\hzn \varphi \coloneqq \{v\in\YY\,|\,\forall z\in\dom\varphi\colon\,\varphi(z+v)\leq\varphi(z)\},
\]
the so-called \emphdef{horizon} of $\varphi$. 
In the subsequently stated lemma, we summarize some properties of the horizon operation.

\begin{mybox}
	\begin{lemma}\label{lem:horizon}
		Let $\func{\varphi}{\YY}{\Rinf}$ be a proper, lsc, convex function.
		Then the following assertions hold.
		\begin{enumerate}[label=(\roman{*})]
			\item\label{item:horizon_explicit} 
				We have $\hzn\varphi = \{v\in\XX\,|\,\varphi_\infty(v)\leq 0\}=(\dom \varphi^\conj)^\circ$.
				Particularly, $\hzn\varphi$ is a nonempty, closed, convex cone.
			\item\label{item:horizon_monotonicity} 
				For $z_1,z_2\in\XX$ such that $z_1-z_2\in\hzn \varphi$, 
				$\varphi(z_1)\leq\varphi(z_2)$ holds true.
			\item\label{item:horizon_of_moreau_envelope}
				For each $\gamma>0$, $\hzn \varphi^\gamma = \hzn \varphi$.
		\end{enumerate}
	\end{lemma}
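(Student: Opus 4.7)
The plan is to prove the three claims in order, with (i) doing the bulk of the work and (ii), (iii) following quickly from it.

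For part (i), I would establish the two equalities separately. The first identity $\hzn\varphi=\{v\in\YY\,|\,\varphi_\infty(v)\leq 0\}$ is essentially a matter of passing between the ``one-step'' condition $\varphi(z+v)\leq\varphi(z)$ and its ``all-scalings'' counterpart. The inclusion $\supseteq$ is immediate upon specializing to $t=1$ in the representation $\varphi_\infty(v)=\sup_{t>0}\frac{\varphi(z_0+tv)-\varphi(z_0)}{t}$ valid for any $z_0\in\dom\varphi$. For the converse, given $v\in\hzn\varphi$ and any $z\in\dom\varphi$, iterating the defining inequality yields $\varphi(z+nv)\leq\varphi(z)$ for all $n\in\N$, and convexity along the segment $[z,z+v]$ gives $\varphi(z+sv)\leq\varphi(z)$ for $s\in[0,1]$. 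Writing $t=n+s$ with $n\in\N_0$, $s\in[0,1)$, and combining these two observations yields $\varphi(z+tv)\leq\varphi(z)$ for all $t>0$, hence $\varphi_\infty(v)\leq 0$. For the second identity, I would invoke the standard convex-analysis fact (see e.g.~\cite[Thm~13.3]{Rockafellar1970}) that $\varphi_\infty=\sigma_{\dom\varphi^\conj}$ for proper, lsc, convex $\varphi$; then $\varphi_\infty(v)\leq 0$ amounts to $\innprod{y}{v}\leq 0$ for every $y\in\dom\varphi^\conj$, which is precisely membership in $(\dom\varphi^\conj)^\circ$. The fact that this set is a nonempty, closed, convex cone is automatic from being a polar cone.

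For part (ii), write $v\coloneqq z_1-z_2\in\hzn\varphi$. If $z_2\in\dom\varphi$, the desired inequality $\varphi(z_1)=\varphi(z_2+v)\leq\varphi(z_2)$ is immediate from the definition of $\hzn\varphi$. Otherwise, $\varphi(z_2)=+\infty$ and the inequality is trivial.

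For part (iii), the envelope $\varphi^\gamma$ is itself proper, lsc, and convex (it is finite-valued and continuously differentiable), so part (i) applies to it and gives $\hzn\varphi^\gamma=\{v\,|\,(\varphi^\gamma)_\infty(v)\leq 0\}$. Appealing to \cref{lem:recession_function_of_moreau_envelope}, which asserts $(\varphi^\gamma)_\infty=\varphi_\infty$, the right-hand side coincides with $\{v\,|\,\varphi_\infty(v)\leq 0\}=\hzn\varphi$.

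The only nontrivial step is the bootstrap from $t=1$ to all $t>0$ in part (i); everything else is a direct application of the definitions, a cited conjugacy identity, and the previously proved lemma on Moreau envelopes.
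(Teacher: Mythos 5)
Your proof is correct and follows essentially the same route as the paper: part (i) via the recession-function and support-function characterizations, part (ii) by plugging $z\coloneqq z_2$ into the defining inequality (with the trivial case $z_2\notin\dom\varphi$ handled separately), and part (iii) by combining part (i) with \cref{lem:recession_function_of_moreau_envelope}. The only difference is cosmetic: for the first identity in (i) you give a self-contained bootstrap from $t=1$ to all $t>0$ using the monotone difference quotient, iteration, and convexity, whereas the paper simply cites \cite[Thm~8.7]{Rockafellar1970}; your argument is sound (note that $v\in\hzn\varphi$ forces $z+v\in\dom\varphi$ whenever $z\in\dom\varphi$, so the iteration is legitimate).
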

\end{mybox}
\begin{proof}
	The equations in assertion~\ref{item:horizon_explicit} 
	are due to \cite[Thm~8.7]{Rockafellar1970} and \cite[Thm~14.2]{Rockafellar1970}, respectively.
	The final statements follow from the properties of the polar cone.
	
	For the proof of assertion~\ref{item:horizon_monotonicity}, pick $z_1,z_2\in\YY$ 
	such that $z_1-z_2\in\hzn \varphi$. This yields $\varphi(z+(z_1-z_2))\leq\varphi(z)$
	for each $z\in\dom\varphi$.
	If $z_2\notin\dom\varphi$, $\varphi(z_1)\leq\varphi(z_2)$ is trivially satisfied.
	Otherwise, we can pick $z\coloneqq z_2$ to obtain this relation as well.
	
	Finally, let us prove assertion~\ref{item:horizon_of_moreau_envelope}.
	Picking $\gamma>0$ arbitrarily, we can apply \cref{lem:recession_function_of_moreau_envelope}
	and assertion~\ref{item:horizon_explicit} in order to find
	\[
		\hzn\varphi
		=
		\{v\in\YY\,|\,\varphi_\infty(v)\leq 0\}
		=
		\{v\in\YY\,|\,(\varphi^\gamma)_\infty(v)\leq 0\}
		=
		\hzn\varphi^\gamma 
	\]
	which already yields the claim.
\end{proof}

Let us note that assertion~\ref{item:horizon_monotonicity} of \cref{lem:horizon}
can already be found in \cite[Lem.\ 7]{BurkeHoheiselNguyen2021}
where a more involved proof is presented.

\subsection{On inexact PPA}\label{sec:foundations_of_PPA}

We consider a given proper, lsc, and convex function $\func{\varphi}{\YY}{\Rinf}$.
The exact proximal operator $\prox_{\gamma\varphi}$ of $\varphi$ with stepsize $\gamma>0$ 
has been introduced in \cref{sec:notation}.
Let us represent the \emphdef{inexact proximal mapping} of $\varphi$ with stepsize $\gamma>0$ by
\begin{equation}\label{eq:inexact_prox_mapping}
	\prox_{\gamma\varphi}^\varepsilon(z)
	\coloneqq
	\left\{
		y \in \YY
	\,\middle\vert\,
		\norm{y-\prox_{\gamma\varphi}(z)}\leq\sqrt{2\gamma\varepsilon}
	\right\}
\end{equation}
for each $z\in\YY$ and $\varepsilon\geq 0$.
A similar notion of inexactness for proximal operators is exploited in 
\cite{andrews2025augmented,rockafellar1976monotone}.
It is apparent that, for all $z\in\YY$, $\prox_{\gamma\varphi}^0(z) = \{\prox_{\gamma\varphi}(z)\}$ 
and, for all $\varepsilon > 0$, 
$\prox_{\gamma\varphi}(z) \in \prox_{\gamma\varphi}^\varepsilon(z)$.
Notice that, whenever $\varepsilon>0$, $\prox_{\gamma\varphi}^\varepsilon$ is set-valued.
Furthermore, this notion of inexactness is closely related to proximal objective suboptimality.
Indeed, picking $y\in\YY$ such that
\[
	\varphi(y) + \frac{1}{2\gamma} \norm{y-z}^2
	\leq
	\varphi^\gamma(z) + \varepsilon,
\]
the $\frac{1}{\gamma}$-uniform convexity of the function $\Phi_z\coloneqq\varphi+\nicefrac{1}{2\gamma}\norm{\cdot-z}^2$
and the inclusion $0\in\partial \Phi_z(\prox_{\gamma\varphi}(z))$ for the exact proximal evaluation lead to the inequalities
\begin{equation*}
	\varepsilon \geq \Phi_z(y) - \Phi_z(\prox_{\gamma\varphi}(z)) \geq \frac{1}{2\gamma} \norm{y-\prox_{\gamma\varphi}(z)}^2,
\end{equation*}
i.e., $y\in\prox_{\gamma\varphi}^\varepsilon(z)$.
Hence, the notion of inexactness exploited here is weaker, and thus allows for more flexibility,
than the one based on proximal objective suboptimality,
which was adopted, among others, in \cite{salzo2012inexact}.

The (inexact) PPA we are dealing with is presented in \cref{alg:PPA}.
It allows for adaptive stepsizes (bounded away from zero) and exploits the inexact proximal operator
defined in \eqref{eq:inexact_prox_mapping}.

\begin{algorithm2e}[htb]
	\DontPrintSemicolon
	\KwData{$\underline\gamma>0$; $y^0\in\YY$\;}
	\For{$k = 0,1,2\ldots$}{
		Select $\gamma_k\geq\underline\gamma$ and $\varepsilon_k\geq 0$.
		\;
		Compute $y^{k+1}\in\prox_{\gamma_k\varphi}^{\varepsilon_k}(y^k)$.
	}
	\caption{Inexact PPA for minimization of $\varphi$.}
	\label{alg:PPA}
\end{algorithm2e}

In the following proposition, we present convergence properties of \cref{alg:PPA}.
We note that these results can also be retrieved 
from the classical findings in \cite[Thm~1]{rockafellar1976monotone}.
Regardless, we present here an explicit and self-contained proof,
inspired by the one of \cite[Thm~1]{banert2014backward},
so we can fall back on several of its arguments when we prove some of our main results in 
\cref{lem:dual_sequences_safeguarded}
as well as
\cref{thm:global_convergence_safeguarded_alm_dual,thm:global_convergence_special_safeguarded_alm}.

\begin{mybox}
	\begin{proposition}\label{lem:PPA}
		Let $\func{\varphi}{\YY}{\Rinf}$ be proper, lsc, convex, and bounded below.
		Let $\{y^k\}$ be a sequence generated by \cref{alg:PPA}
		such that $\sum_{k=0}^\infty\sqrt{\gamma_k\varepsilon_k}<\infty$.
		Then the following hold:
		\begin{enumerate}[label=(\roman{*})]
			\item\label{item:PPA_summable}
				$\sum_{k=0}^{\infty}\gamma_k^{-1}\norm{y^{k+1}-y^k}^2<\infty$, hence $\gamma_k^{-1}(y^{k+1}-y^k)\to 0$;		
			\item\label{item:PPA_diverges} 
				if $\varphi$ does not possess a minimizer,
				then $\norm{y^k}\to\infty$;
			\item\label{item:PPA_converges} 
				if $\varphi$ possesses a minimizer, 
				then $\{y^k\}$ converges to a minimizer of $\varphi$.
		\end{enumerate}
	\end{proposition}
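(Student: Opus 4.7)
The plan is to adapt the argument behind \cite[Thm~1]{banert2014backward} to the inexact setting, taking the three-point inequality \eqref{eq:three_point_inequality} as the main workhorse. Writing $\tilde y^{k+1}\coloneqq\prox_{\gamma_k\varphi}(y^k)$, the inexactness criterion \eqref{eq:inexact_prox_mapping} yields $\|y^{k+1}-\tilde y^{k+1}\|\leq\sqrt{2\gamma_k\varepsilon_k}$. Also, $\gamma_k\geq\underline\gamma>0$ combined with $\sum\sqrt{\gamma_k\varepsilon_k}<\infty$ implies $\sum\sqrt{\varepsilon_k}<\infty$ and hence $\sum\varepsilon_k<\infty$.

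For (i), I would apply \eqref{eq:three_point_inequality} to $\gamma_k\varphi$ at $z=y^k$ with $w=y^k$ to obtain the descent estimate $\varphi(\tilde y^{k+1})\leq\varphi(y^k)-\gamma_k^{-1}\|y^k-\tilde y^{k+1}\|^2$. Next, the subgradient inclusion $\gamma_k^{-1}(y^k-\tilde y^{k+1})\in\partial\varphi(\tilde y^{k+1})$ evaluated at $y^{k+1}$, followed by Cauchy--Schwarz and Young's inequality, gives $\varphi(\tilde y^{k+1})-\varphi(y^{k+1})\leq \gamma_k^{-1}\|y^k-\tilde y^{k+1}\|\,\|y^{k+1}-\tilde y^{k+1}\|\leq \frac{1}{2\gamma_k}\|y^k-\tilde y^{k+1}\|^2+\varepsilon_k$. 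Combining the two produces the quasi-descent bound $\frac{1}{2\gamma_k}\|y^k-\tilde y^{k+1}\|^2\leq\varphi(y^k)-\varphi(y^{k+1})+\varepsilon_k$. Telescoping, using the lower boundedness of $\varphi$ and $\sum\varepsilon_k<\infty$, yields $\sum_k\gamma_k^{-1}\|y^k-\tilde y^{k+1}\|^2<\infty$. The inequality $\|y^{k+1}-y^k\|^2\leq 2\|y^{k+1}-\tilde y^{k+1}\|^2+2\|y^k-\tilde y^{k+1}\|^2\leq 4\gamma_k\varepsilon_k+2\|y^k-\tilde y^{k+1}\|^2$ then gives the summability claim in (i), and $\gamma_k\geq\underline\gamma$ yields $\gamma_k^{-1}(y^{k+1}-y^k)\to 0$.

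For (iii), fix any $y^\ast\in\argmin\varphi$. Applying \eqref{eq:three_point_inequality} to $\gamma_k\varphi$ at $z=y^k$ with $w=y^\ast$ and using $\varphi(\tilde y^{k+1})\geq\varphi(y^\ast)$ produces the Fej\'er-type inequality $\|\tilde y^{k+1}-y^\ast\|^2+\|y^k-\tilde y^{k+1}\|^2\leq\|y^k-y^\ast\|^2$; combined with the triangle inequality this yields $\|y^{k+1}-y^\ast\|\leq\|y^k-y^\ast\|+\sqrt{2\gamma_k\varepsilon_k}$. A standard Polyak-type result together with $\sum\sqrt{\gamma_k\varepsilon_k}<\infty$ implies that $\{\|y^k-y^\ast\|\}$ converges for every $y^\ast\in\argmin\varphi$. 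In particular $\{y^k\}$ is bounded, and by (i) the element $\gamma_k^{-1}(y^k-\tilde y^{k+1})\in\partial\varphi(\tilde y^{k+1})$ tends to zero. Outer semicontinuity of the convex subdifferential then forces $0\in\partial\varphi(\bar y)$ at every accumulation point $\bar y$ of $\{y^k\}$, so $\bar y\in\argmin\varphi$. An application of \cref{lem:Banerts_lemma} with $Y\coloneqq\argmin\varphi$ completes the full convergence argument.

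Assertion (ii) then follows by contraposition with the identical subgradient-limit argument: a bounded subsequence of $\{y^k\}$ would produce an accumulation point at which $0\in\partial\varphi$, contradicting the standing assumption. The main subtlety is the Young balancing in the proof of (i), which converts the cross-term $\gamma_k^{-1}\|y^k-\tilde y^{k+1}\|\sqrt{2\gamma_k\varepsilon_k}$ into a summable remainder plus half of the descent's quadratic term, so that telescoping closes \emph{without} requiring the existence of a minimizer.
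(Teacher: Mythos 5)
Your treatment of \ref{item:PPA_converges} (quasi-Fej\'er monotonicity via \eqref{eq:three_point_inequality} with $w=y^\ast$, convergence of $\{\norm{y^k-y^\ast}\}$, the closed-graph argument for $0\in\partial\varphi(\bar y)$, and \cref{lem:Banerts_lemma}) is sound and essentially the paper's route. The problem is your proof of \ref{item:PPA_summable}, on which both \ref{item:PPA_diverges} and your subgradient-limit step in \ref{item:PPA_converges} rest. The claimed quasi-descent bound $\tfrac{1}{2\gamma_k}\norm{y^k-\tilde y^{k+1}}^2\leq\varphi(y^k)-\varphi(y^{k+1})+\varepsilon_k$ does not follow from the two ingredients you combine. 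Telescoping $\varphi(y^k)-\varphi(y^{k+1})$ requires an \emph{upper} bound on $\varphi(y^{k+1})$ in terms of $\varphi(\tilde y^{k+1})$, i.e.\ $\varphi(y^{k+1})\leq\varphi(\tilde y^{k+1})+X_k$ with summable $X_k$. What the subgradient inclusion at $\tilde y^{k+1}$ delivers is the opposite inequality $\varphi(y^{k+1})\geq\varphi(\tilde y^{k+1})+\innprod{v_k}{y^{k+1}-\tilde y^{k+1}}$, i.e.\ the bound $\varphi(\tilde y^{k+1})-\varphi(y^{k+1})\leq X_k$ that you correctly derive is a \emph{lower} bound on $\varphi(y^{k+1})$, which feeds into the chain with the wrong sign. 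No upper bound on $\varphi(y^{k+1})$ is available: the inexactness notion \eqref{eq:inexact_prox_mapping} is purely metric, so $y^{k+1}$ may lie outside $\dom\varphi$ altogether (take $\varphi=\indicator_{\{0\}}$ and $y^{k+1}\neq 0$ with $\norm{y^{k+1}}\leq\sqrt{2\gamma_k\varepsilon_k}$), making $\varphi(y^{k+1})=+\infty$ and the right-hand side of your quasi-descent bound equal to $-\infty$; even inside $\dom\varphi$, convexity gives no local upper bound near the boundary of the domain. The same difficulty contaminates the descent step $\varphi(\tilde y^{k+1})\leq\varphi(y^k)-\gamma_k^{-1}\norm{y^k-\tilde y^{k+1}}^2$, which is vacuous when $\varphi(y^k)=+\infty$.

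The paper circumvents this by never evaluating $\varphi$ at the inexact iterates: it telescopes along the \emph{exact} prox points $y^{k}_{\rm e}=\prox_{\gamma_{k-1}\varphi}(y^{k-1})$, using \eqref{eq:three_point_inequality} with $z=y^k$ (the inexact point, appearing only as the prox center) and $w=y^k_{\rm e}$ to get
$\varphi(y^{k+1}_{\rm e})\leq\varphi(y^k_{\rm e})+\tfrac{1}{2\gamma_k}\bigl(\norm{y^k-y^k_{\rm e}}^2-\norm{y^{k+1}_{\rm e}-y^k_{\rm e}}^2\bigr)$,
where the error term $\norm{y^k-y^k_{\rm e}}^2\leq 2\gamma_{k-1}\varepsilon_{k-1}$ is summable after division by $\gamma_k\geq\underline\gamma$; summability of $\sum\gamma_k^{-1}\norm{y^{k+1}_{\rm e}-y^k_{\rm e}}^2$ then transfers to $\sum\gamma_k^{-1}\norm{y^{k+1}-y^k}^2$ by the triangle inequality. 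You should restructure \ref{item:PPA_summable} along these lines; once that is done, your arguments for \ref{item:PPA_diverges} and \ref{item:PPA_converges} go through as written.
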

\end{mybox}
\begin{proof}
	To start, let us prove assertion \ref{item:PPA_summable}.
	Note that we have
	\[
		\sum_{k=0}^\infty\sqrt{\varepsilon_k}
		=
		\frac{1}{\sqrt{\underline{\gamma}}}\sum_{k=0}^\infty\sqrt{\underline{\gamma}\varepsilon_k}
		\leq
		\frac{1}{\sqrt{\underline{\gamma}}}\sum_{k=0}^\infty\sqrt{\gamma_k\varepsilon_k}
		<
		\infty,
	\]
	and, thus, as necessarily $\varepsilon_k\downto 0$, we have
	$\sum_{k=0}^\infty\varepsilon_k<\infty$.
	Similarly, we find that $\sum_{k=0}^\infty\gamma_k\varepsilon_k<\infty$.
	For each $k\in\N_0$, we define the exact proximal step as
	$
		y^{k+1}_{\rm e}\coloneqq \prox_{\gamma_k\varphi}(y^k)
	$.
	Then we have
	\begin{equation}\label{eq:inexact_prox_bound_distance_concrete}
		\norm{y^{k+1}-y^{k+1}_{\rm e}}\leq\sqrt{2\gamma_k\varepsilon_k}
	\end{equation}
	for each $k\in\N_0$.
	For each $k\in\N$, \eqref{eq:three_point_inequality} gives us
	\[
		\varphi(y^{k+1}_{\rm e})
		\leq
		\varphi(y^k_{\rm e}) 
		+ 
		\frac{1}{2\gamma_k}\left( \norm{y^k-y^k_{\rm e}}^2 - \norm{y^{k+1}_{\rm e}-y^k_{\rm e}}^2 \right).
	\]
	Summing up the above inequalities for $k=1,\ldots,N$ yields
	\begin{equation}\label{eq:inexact_prox_bound_telescope}
		\varphi(y^{N+1}_{\rm e})
		\leq
		\varphi(y^1_{\rm e})
		+
		\frac12\sum_{k=1}^N\frac{1}{\gamma_k}
				\left( \norm{y^k-y^k_{\rm e}}^2 - \norm{y^{k+1}_{\rm e}-y^k_{\rm e}}^2 \right).
	\end{equation}
	Hence, we find
	\begin{align*}
		\sum_{k=1}^N\frac{1}{\gamma_k}\norm{y^{k+1}-y^k}^2
		\leq{}&
		\sum_{k=1}^N\frac{1}{\gamma_k}\left(
			\norm{y^{k+1}-y^{k+1}_{\rm e}}
			+
			\norm{y^{k+1}_{\rm e}-y^k_{\rm e}}
			+
			\norm{y^k_{\rm e}-y^k}
			\right)^2
		\\
		\leq{}&
		3\sum_{k=1}^N\frac{1}{\gamma_k}\left(
			\norm{y^{k+1}-y^{k+1}_{\rm e}}^2
			+
			\norm{y^{k+1}_{\rm e}-y^k_{\rm e}}^2
			+
			\norm{y^k_{\rm e}-y^k}^2
			\right)
		\\
		\leq{}&
		6\sum_{k=0}^{\infty}\varepsilon_k
		+
		3\sum_{k=1}^N\frac{1}{\gamma_k}\norm{y^{k+1}_{\rm e}-y^k_{\rm e}}^2
		+
		\frac{6}{\underline{\gamma}}\sum_{k=0}^\infty\gamma_k\varepsilon_k
		\\
		\leq{}&
		6\sum_{k=0}^{\infty}\varepsilon_k
		+
		\frac{12}{\underline{\gamma}}\sum_{k=0}^\infty\gamma_k\varepsilon_k
		+
		6\left( \varphi(y^1_{\rm e})-\inf\varphi \right),
	\end{align*}
	where the last two inequalities are due to \eqref{eq:inexact_prox_bound_distance_concrete}
	and \eqref{eq:inexact_prox_bound_telescope}.	
	Noting that the right-hand side of this estimate is a finite constant,
	taking the limit $N\to\infty$ yields the first statement in \ref{item:PPA_summable}.
	The latter implies $\gamma_k^{-1}\norm{y^{k+1}-y^k}^2\to 0$,
	and noting that $\{\gamma_k^{-1}\}$ is bounded,
	$\gamma_k^{-1}(y^{k+1}-y^k)\to 0$ follows.
	
	As a next step, we aim to show that each accumulation point of $\{y^k\}$
	is a minimizer of $\varphi$.
	Therefore, pick a subsequence $\{y^{k+1}\}_{k\in K}$ and some point $\tilde y\in\YY$
	such that $y^{k+1}\to_K\tilde y$.
	Then \eqref{eq:inexact_prox_bound_distance_concrete} immediately yields $y^{k+1}_{\rm e}\to_K\tilde y$.
	By definition of the proximal mapping,
	we find $0\in\partial(\varphi + \nicefrac{1}{2\gamma_k}\norm{\cdot-y^k}^2)(y^{k+1}_{\rm e})$
	or, equivalently,
	\begin{equation}\label{eq:OC_from_prox}
		-\frac{1}{\gamma_k}(y^{k+1}_{\rm e}-y^k) 
		\in 
		\partial\varphi(y^{k+1}_{\rm e})
	\end{equation}
	for each $k\in K$.
	Noting that
	\[
		\frac{1}{\gamma_k}(y^{k+1}_{\rm e}-y^k)
		=
		\frac{1}{\gamma_k}(y^{k+1}_{\rm e}-y^{k+1})
		+
		\frac{1}{\gamma_k}(y^{k+1}-y^k)
		\to
		0
	\]
	holds according to the boundedness of $\{\gamma_k^{-1}\}$, 
	\eqref{eq:inexact_prox_bound_distance_concrete}, and the first part of the proof,
	taking the limit $k\to_K\infty$ in \eqref{eq:OC_from_prox} while respecting 
	\cite[Thm~24.4]{Rockafellar1970} yields $0\in\partial\varphi(\tilde y)$.
	This, however, means that $\tilde y$ is a minimizer of $\varphi$.
	
	The above immediately proves assertion~\ref{item:PPA_diverges}.
	Indeed, if $\{y^k\}$ possesses a bounded subsequence, then the latter possesses an accumulation
	point which would be a minimizer of $\varphi$. This, however, is a contradiction.
	
	To complete the proof, let us verify assertion~\ref{item:PPA_converges}.
	Therefore, let $\bar y\in\YY$ be a minimizer of $\varphi$.
	Noting that $\bar y$ is a fixed point of $\prox_{\gamma\varphi}$ for each $\gamma>0$,
	by \eqref{eq:inexact_prox_bound_distance_concrete} we find
	\begin{multline*}
		\norm{y^{k+1}-\bar y}
		\leq
		\norm{y^{k+1}-y^{k+1}_{\rm e}} + \norm{y^{k+1}_{\rm e} - \bar y}
		\\
		\leq
		\sqrt{2\gamma_k\varepsilon_k} + \norm{\prox_{\gamma_k\varphi}(y^k)-\prox_{\gamma_k\varphi}(\bar y)}
		\leq
		\sqrt{2\gamma_k\varepsilon_k} + \norm{y^k-\bar y}
	\end{multline*}
	for each $k\in\N_0$,
	exploiting
	the nonexpansiveness of the proximal operator.
	The above yields that the sequence $\{\norm{y^k-\bar y}-\sum_{\ell=0}^{k-1}\sqrt{2\gamma_\ell\varepsilon_\ell}\}$
	is monotonically nonincreasing, since
	\begin{equation*}
		\norm{y^{k+1}-\bar y}-\sum_{\ell=0}^{k}\sqrt{2\gamma_\ell\varepsilon_\ell}
		\leq
		\sqrt{2\gamma_k\varepsilon_k} + \norm{y^k-\bar y}-\sum_{\ell=0}^{k}\sqrt{2\gamma_\ell\varepsilon_\ell} 
		=
		\norm{y^k-\bar y}-\sum_{\ell=0}^{k-1}\sqrt{2\gamma_\ell\varepsilon_\ell}
	\end{equation*}
	holds for all $k\in\N$,
	and bounded below by the finite constant $-\sum_{\ell=0}^\infty\sqrt{2\gamma_k\varepsilon_k}$.
	Hence, it converges, 
	and the calculus rules for convergent sequences yield that $\{\norm{y^k-\bar y}\}$ converges.
	Noting that the argumentation is valid for each minimizer of $\varphi$,
	the assertion follows from \cref{lem:Banerts_lemma}.
\end{proof}

\subsection{Lagrangian framework}

Let us compile here some underlying concepts of ALMs.
Introducing a (Lagrange) multiplier $y\in\YY$, we define 
the \emphdef{Lagrangian} function $\func{\LL}{\XX\times\YY}{\R\cup\{-\infty\}}$ of \eqref{eq:P} by
\begin{equation*}
	\LL(x,y)
	\coloneqq
	f(x) + \inf_z \left\{ g(z) + \innprod{y}{c(x)-z} \right\}
	=
	f(x) + \innprod{y}{c(x)} - g^\conj(y)
	.
\end{equation*}
By definition, the mapping $\LL(x,\cdot)$ is concave for each $x\in\XX$.
Rockafellar \cite{rockafellar2023convergence} refers to $\LL$ as the \emphdef{generalized Lagrangian} function
associated to \eqref{eq:P}.
For more details, the interested reader may also study the discussion 
in \cite[Rem.\ 3.5]{DeMarchiMehlitz2024}.
By \cite[Thm~11.1]{rockafellar1998variational} we have
\begin{equation}\label{eq:saddleLL}
	\Phi(x)
	=
	f(x)+g(c(x))
	=
	f(x)+g^{\conj\conj}(c(x))
	=
	f(x)+\sup_{y}\left\{ \innprod{y}{c(x)}-g^\conj(y) \right\}
	=
	\sup_y \LL(x,y)
\end{equation}
for each $x\in\XX$.
Following the same pattern,
adding a quadratic penalty for the constraint violation
leads to the definition of the \emphdef{augmented Lagrangian} function
$\func{\LL_\mu}{\XX \times \YY}{\R}$ associated to \eqref{eq:P}, given by
\begin{align}\label{eq:ALMz:AugLagrangian}
	\LL_\mu(x,y)
	{}\coloneqq{}&
	f(x) + \inf_z \left\{ g(z) + \innprod{y}{c(x)-z} + \frac{1}{2 \mu} \norm{c(x) - z}^2 \right\} \\
	={}&
	f(x)
	+ \inf_z \left\{ g(z) + \frac{1}{2 \mu} \norm{c(x) + \mu y - z}^2 \right\} - \frac{\mu}{2} \norm{y}^2 \nonumber\\
	{}={}&
	f(x) + g^\mu( c(x) + \mu y ) - \frac{\mu}{2} \norm{y}^2
	\nonumber
\end{align}
for some penalty parameter $\mu>0$.
Let us note that, 
according to \cite[Prop.\ 13.24\,(iii)]{BauschkeCombettes2011} and \cite[Thm~11.1]{rockafellar1998variational},
we have
\begin{align*}
		\LL_\mu(x,y)
		&=
		f(x) + (g^\mu)^{\conj\conj}(c(x)+\mu y) - \frac{\mu}{2}\norm{y}^2
		\\
		&=
		f(x) + \sup_z\left\{\innprod{z}{c(x)+\mu y} - (g^\mu)^{\conj}(z)\right\} - \frac{\mu}{2}\norm{y}^2
		\\
		&=
		f(x) + \sup_z\left\{\innprod{z}{c(x)+\mu y} - g^\conj(z) - \frac{\mu}{2}\norm{z}^2 - \frac{\mu}{2}\norm{y}^2\right\}
		\\
		&=
		f(x) + \sup_z\left\{\innprod{z}{c(x)} - g^\conj(z) - \frac{\mu}{2}\norm{z-y}^2\right\}
		\\
		&=
		\sup_z\left\{\LL(x,z) - \frac{\mu}{2}\norm{z-y}^2\right\}.
		\numberthis\label{eq:rep_AL_func}
\end{align*}

The underlying idea of the ALM is now to
minimize, in each iteration, the augmented Lagrangian $\LL_\mu(\cdot,y)$,
and to perform a suitable update of the penalty parameter as well as the multiplier afterwards.
Note that our assumptions will guarantee that $\LL_\mu(\cdot,y)$ is convex which
allows for its (approximate) global minimization, see \cref{lem:convex_setting} below.

Let us comment on some more elementary terminology regarding \eqref{eq:P}.
A feasible point $\bar x\in\XX$ of \eqref{eq:P} is called \emphdef{stationary} 
whenever there exists $\bar y\in\YY$ 
such that $0 \in \partial_x \LL(\bar x,\bar y)$ and $0 \in \partial_y (-\LL)(\bar x,\bar y)$,
i.e., conditions
\begin{equation*}
\nabla_x\LL(\bar x,\bar y)=0
,\qquad
c(\bar x) \in \partial g^\conj(\bar y)
\end{equation*}
are valid.
Owing to convexity of $g$, the latter system is equivalent to
\begin{equation*}
\nabla_x\LL(\bar x,\bar y)=0
,\qquad
\bar y\in\partial g(c(\bar x))
,
\end{equation*}
and, thus, implicitly requires feasibility of $\bar x$.
Let us note that stationarity of $\bar x$ implies $0\in\partial\Phi(\bar x)$,
and thus, as $\Phi$ is assumed to be convex, minimality of $\bar x$ for $\Phi$.
The converse is true in the presence of a qualification condition.
Exemplary, the condition
\begin{equation}\label{eq:QC}
	c'(\bar x)^\top y=0
	,\,
	y\in N_{\dom g}(c(\bar x))
	\quad\implies\quad
	y=0
\end{equation}
can be used for that purpose, see, e.g., \cite[Prop.\ 5.2]{BurkeHoheisel2017},
as it implies $\partial(g\circ c)(\bar x)=c'(\bar x)^\top\partial g(c(\bar x))$.
Throughout the paper, however, we will sometimes merely assume the existence of a
minimizer of \eqref{eq:P} which is stationary,
and disregard qualification conditions.
Finally, let
\[
\optiYY(\bar x)\coloneqq\{y\in\partial g(c(\bar x))\,|\,\nabla_x\LL(\bar x,y)=0\}
\]
be the set of multipliers associated with $\bar x$.
The set-valued mapping $\ffunc{\optiYY}{\XX}{\YY}$ is
the so-called multiplier mapping and
possesses a closed graph, 
see \cite[Ex.\ 13.30]{rockafellar1998variational}.

\section{Fully convex composite optimization}
\label{sec:what_is_convex}

Throughout the remainder of the paper, 
let us assume that \eqref{eq:P} satisfies the following assumptions
which, in parts, have already been outlined in \cref{sec:intro}.
\begin{mybox}
	\begin{assum}\label{ass:convexP}
		The following hold for \eqref{eq:P}:
		\begin{itemize}
		\item 
		$f$ is continuously differentiable and convex;
		\item 
		$g$ is proper, lsc, convex,
		and its proximal mapping is numerically accessible;	
		\item 
		$c$ is continuously differentiable and $(-\hzn g)$-convex, i.e.,
		for all $x,x'\in\XX$ and $\lambda\in[0,1]$, we have
		\[
		c(\lambda x+(1-\lambda)x') - \lambda c(x) - (1-\lambda)c(x') \in \hzn g;
		\]
		\item 
		problem \eqref{eq:P} is feasible, possesses a minimizer,
		and its minimal value is $\Phi^\star$.
		\end{itemize}
	\end{assum}
\end{mybox}

Below, we summarize some essential consequences of this setting.
Among others, it is shown that \cref{ass:convexP} guarantees that
\eqref{eq:P} is a convex problem, and that the augmented Lagrangian function
associated with \eqref{eq:P}, see \eqref{eq:ALMz:AugLagrangian},
is convex in $x$ for each $\mu>0$ and $y\in\YY$.

\begin{mybox}
	\begin{lemma}\label{lem:convex_setting}
		Suppose that \cref{ass:convexP} holds.
		Then the following hold.
		\begin{enumerate}[label=(\roman{*})]
			\item\label{item:convex_composition}
				The function $g\circ c$ is convex
				and, particularly, $\Phi$ is convex.
			\item\label{item:convex_scalarization}
				For each $y\in \dom g^\conj$, the scalarization function $\innprod{y}{c}$ is convex.
			\item \label{item:convex_composition_envelope}
				For each $\mu>0$ and for each $y\in\YY$, the mapping $g^\mu(c(\cdot)+\mu y)$ is convex.
				Particularly, $\LL_\mu(\cdot,y)$ is convex.
			\item\label{item:convex_AL_function_concave_in_y}
				For each $\mu>0$ and for each $x\in\XX$, $\LL_\mu(x,\cdot)$ is concave.
		\end{enumerate}
	\end{lemma}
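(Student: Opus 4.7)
The plan is to exploit the $(-\hzn g)$-convexity of $c$ together with the horizon calculus collected in \cref{lem:horizon}, and then derive each assertion either by a direct two-step estimate (one step via horizon monotonicity, one step via convexity) or by rewriting $\LL_\mu$ in a form whose (con)vexity is manifest.

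For assertion~\ref{item:convex_composition}, I would pick $x,x'\in\XX$ and $\lambda\in[0,1]$, set $v\coloneqq c(\lambda x+(1-\lambda)x')-(\lambda c(x)+(1-\lambda)c(x'))\in\hzn g$, and apply \cref{lem:horizon}\,\ref{item:horizon_monotonicity} to $z_1\coloneqq c(\lambda x+(1-\lambda)x')$ and $z_2\coloneqq \lambda c(x)+(1-\lambda)c(x')$. This yields $g(z_1)\leq g(z_2)$, and then convexity of $g$ gives $g(z_2)\leq \lambda g(c(x))+(1-\lambda)g(c(x'))$. Convexity of $\Phi=f+g\circ c$ then follows since $f$ is convex. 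For assertion~\ref{item:convex_scalarization}, the key observation is the identity $\hzn g=(\dom g^\conj)^\circ$ from \cref{lem:horizon}\,\ref{item:horizon_explicit}: for $y\in\dom g^\conj$ and the same $v$, one has $\innprod{y}{v}\leq 0$, i.e.\
\[
\innprod{y}{c(\lambda x+(1-\lambda)x')}\leq \lambda\innprod{y}{c(x)}+(1-\lambda)\innprod{y}{c(x')},
\]
so $\innprod{y}{c}$ is convex.

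For assertion~\ref{item:convex_composition_envelope}, I would reuse the argument of~\ref{item:convex_composition}, but now applied to the Moreau envelope $g^\mu$. The crucial ingredients are that $g^\mu$ is itself proper, lsc, and convex, and that $\hzn g^\mu=\hzn g$ by \cref{lem:horizon}\,\ref{item:horizon_of_moreau_envelope}. With $v$ as above and $y\in\YY$ arbitrary, monotonicity in the direction $v\in\hzn g^\mu$ gives
\[
g^\mu\bigl(c(\lambda x+(1-\lambda)x')+\mu y\bigr)\leq g^\mu\bigl(\lambda c(x)+(1-\lambda)c(x')+\mu y\bigr),
\]
and writing the right-hand argument as $\lambda(c(x)+\mu y)+(1-\lambda)(c(x')+\mu y)$ followed by convexity of $g^\mu$ finishes the estimate. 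Adding the (convex in $x$) term $f(x)$ and the (constant in $x$) term $-\frac{\mu}{2}\norm{y}^2$ then gives convexity of $\LL_\mu(\cdot,y)$.

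For assertion~\ref{item:convex_AL_function_concave_in_y}, the cleanest route is to return to the defining expression
\[
\LL_\mu(x,y)=\inf_z\left\{f(x)+g(z)+\innprod{y}{c(x)-z}+\frac{1}{2\mu}\norm{c(x)-z}^2\right\}.
\]
With $x$ fixed, the bracketed expression is \emph{affine} in $y$ for every $z$, so $\LL_\mu(x,\cdot)$ is an infimum of affine functions and hence concave. (Equivalently, using \eqref{eq:rep_AL_func} one sees $-\LL_\mu(x,\cdot)$ as the Moreau envelope of the convex function $z\mapsto g^\conj(z)-\innprod{z}{c(x)}-f(x)$, which is convex.) I do not expect any serious obstacle here; the only subtle point is to correctly identify that the representation in \eqref{eq:ALMz:AugLagrangian} genuinely exhibits $\LL_\mu(x,\cdot)$ as an infimum of affine functions, which is why I would start from that form rather than from the envelope representation $f+g^\mu(c(\cdot)+\mu y)-\tfrac\mu2\norm{y}^2$.
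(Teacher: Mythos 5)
Your proof is correct, and for assertions~\ref{item:convex_composition} and~\ref{item:convex_composition_envelope} it is essentially the paper's argument: horizon monotonicity (\cref{lem:horizon}\,\ref{item:horizon_monotonicity}) in the direction $v\in\hzn g=\hzn g^\mu$ followed by convexity of $g$ resp.\ $g^\mu$. Where you diverge is in~\ref{item:convex_scalarization} and~\ref{item:convex_AL_function_concave_in_y}, and in both places your route is more elementary and self-contained. For~\ref{item:convex_scalarization} the paper invokes an external result (\cite[Thm~3a]{BurkeHoheiselNguyen2021}) after noting $\dom g^\conj\subseteq(\hzn g)^\circ$; you instead read off $\innprod{y}{v}\leq 0$ directly from the polarity $\hzn g=(\dom g^\conj)^\circ$ of \cref{lem:horizon}\,\ref{item:horizon_explicit}, which gives the convexity inequality for $\innprod{y}{c}$ in one line. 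For~\ref{item:convex_AL_function_concave_in_y} the paper uses the representation \eqref{eq:rep_AL_func} together with concavity of $\LL(x,\cdot)$ (equivalently, $-\LL_\mu(x,\cdot)$ is the Moreau envelope of the convex function $-\LL(x,\cdot)$); your observation that the first line of \eqref{eq:ALMz:AugLagrangian} exhibits $\LL_\mu(x,\cdot)$ as an infimum over $z$ of functions affine in $y$ is correct and arguably cleaner, since concavity of a pointwise infimum of affine functions needs no machinery. Both of your substitutions are sound; the paper's choices merely reuse results it needs elsewhere anyway.
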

\end{mybox}
\begin{proof}
	Assertion~\ref{item:convex_composition} is an immediate consequence of \cref{lem:horizon}\,\ref{item:horizon_monotonicity}
	and the $(-\hzn g)$-convexity of $c$.
	
	In order to prove \ref{item:convex_scalarization}, we emphasize that
	$\dom g^\conj\subset(\hzn g)^\circ=-(-\hzn g)^\circ$ holds due to \cref{lem:horizon}\,\ref{item:horizon_explicit},
	so that the assertion immediately follows from \cite[Thm~3a]{BurkeHoheiselNguyen2021}.
	
	For the proof of assertion~\ref{item:convex_composition_envelope}, we first observe that the mapping
	$c(\cdot)+\mu y$ is $(-\hzn g)$-convex as well since the same holds for $c$.
	Furthermore, \cref{lem:horizon}\,\ref{item:horizon_of_moreau_envelope} yields that it is already
	$(-\hzn g^\mu)$-convex as $\hzn g^\mu$ and $\hzn g$ coincide.
    Now, convexity of $g^\mu(c(\cdot)+\mu y)$ follows as in the proof of assertion~\ref{item:convex_composition}.
    Finally, $\LL_\mu(\cdot,y)$ is convex as a sum of convex functions.
    
    Assertion~\ref{item:convex_AL_function_concave_in_y} readily follows from representation \eqref{eq:rep_AL_func},
    keeping in mind that $\LL(x,\cdot)$ is concave.
\end{proof}

\begin{mybox}
	\begin{remark}\label{rem:convex_setting_via_cone}
		Let us note that, in order to obtain convexity of the composition $g\circ c$, one could also consider the more general
		setting where, given a nonempty, closed, convex cone $K\subseteq\YY$, the following properties hold, 
		see \cite[Prop.\ 1]{BurkeHoheiselNguyen2021} as well:
		\begin{enumerate}[label=(\roman{*})]
			\item\label{item:-K_convexity} 
				$c$ is $(-K)$-convex, i.e., for all $x,x'\in \XX$ and $\lambda\in[0,1]$, we have
				\[
					c(\lambda x+(1-\lambda)x')-\lambda c(x)-(1-\lambda)c(x')\in K,
				\]
			\item\label{item:-K_increasing} 
				$g$ is convex and $(-K)$-increasing, i.e., for all $z_1,z_2\in\YY$, we have
				\begin{equation*}
					z_1-z_2\in K\quad\implies \quad g(z_1)\leq g(z_2).
				\end{equation*}
		\end{enumerate}
		One can observe, however, that whenever property~\ref{item:-K_increasing} is valid for a given closed, convex cone $K\subseteq\YY$,
		then $K\subseteq\hzn g$. Indeed, for $v\in K$ and $z\in\dom g$, due to $(z+v)-z\in K$, the characterization in \ref{item:-K_increasing}
		gives $g(z+v)\leq g(z)$, and $v\in\hzn g$ follows.
		Hence, in order to achieve a maximum amount of freedom in the choice of $c$ satisfying~\ref{item:-K_convexity},
		picking $K\coloneqq \hzn g$ is expedient.
		Related discussions can be found in \cite[\S 5]{GisslerHoheisel2023}.
		It remains open whether the property in \cref{lem:convex_setting}\,\ref{item:convex_composition_envelope} can be
		verified whenever $K$ is chosen different from $\hzn g$.
	\end{remark}
\end{mybox}

\begin{mybox}
	\begin{example}\label{ex:recover_setting_of_KanzowSteck}
		Let us consider the special case where, for a nonempty, closed, convex set $C\subseteq\YY$,
		$g\coloneqq\indicator_C$ is chosen.
		Noting that $g^\conj=\support_C$ holds true, we find $\dom g^\conj = (C_\infty)^\circ$, 
		and \cref{lem:horizon}\,\ref{item:horizon_explicit} yields $\hzn g = C_\infty$ since
		$C_\infty$ is a closed, convex cone.
		Hence, in this situation, our standing assumption on $c$ boils down to demanding that
		$c$ is $(-C_\infty)$-convex,
		and that is precisely the setting which has been investigated, 
		e.g., in \cite{steck2018dissertation}.
	\end{example}
\end{mybox}

\subsection{On the value function of a fully convex composite problem}

Next, we will study properties of the \emphdef{value function} 
$\func{\mathcal{V}}{\YY}{\R\cup\{\pm\infty\}}$ of \eqref{eq:P}, 
defined by
\begin{equation}\label{eq:value_function}
	\mathcal{V}(u)
	\coloneqq
	\inf_x \overline{\Phi}(u,x),
\end{equation}
see \cite{rockafellar1974conjugate},
where $\func{\overline{\Phi}}{\YY\times\XX}{\Rinf}$
is the composite mapping given by
\begin{equation}\label{eq:composite_mapping_perturbed}
	\overline{\Phi}(u,x)\coloneqq f(x) + g(c(x) - u).
\end{equation}
Owing to \cref{ass:convexP},
$\overline{\Phi}$ is a proper, lsc, and (jointly) convex function.

\begin{mybox}
\begin{lemma}\label{lem:value_function}
	Let \cref{ass:convexP} hold, and let $\mathcal{V}$ be given by \eqref{eq:value_function}.
	Then the following hold.
	\begin{enumerate}[label=(\roman*)]
		\item\label{lem:value_function:cvx}
		Function $\mathcal{V}$ is convex. 
		\item\label{lem:value_function:proper}
		Function $\mathcal{V}$ is proper and lsc
		if there exists $\bar y\in \dom g^\conj$ such that one of the following (equivalent) conditions is satisfied: 
		\begin{enumerate}[label=(\alph{*})]
			\item\label{lem:value_function:proper1}
			$(-\dom{f^\conj})\cap \dom\innprod{\bar y}{c}^\conj\neq \emptyset$;
			\item\label{lem:value_function:proper2}
			$ f+ \innprod{\bar y}{c}$ is bounded below.
		\end{enumerate}
		In particular, this is the case if there exists $\bar y\in \dom g^\conj$ such that 
		the convex function $f+\innprod{\bar y}{c}$ attains its minimum,
		which is true under any of the following conditions: 
		\begin{enumerate}[label=(\alph{*}),resume]
			\item\label{lem:value_function:proper3}
			$f_\infty(x)+\innprod{\bar y}{c}_\infty(x)>0$ for all $x\in\XX\setminus\{0\}$;
			\item\label{lem:value_function:proper4}
			there exists a minimizer $\bar x\in\XX$ of \eqref{eq:P} which is stationary. 
		\end{enumerate}
	\end{enumerate}
\end{lemma}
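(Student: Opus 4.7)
The plan is to establish~\ref{lem:value_function:cvx} by recognizing $\mathcal{V}$ as the partial infimum of a jointly convex function, and to handle~\ref{lem:value_function:proper} by combining Fenchel calculus, Fenchel--Moreau duality, and a recession/coercivity argument for the sufficient conditions~\ref{lem:value_function:proper3} and~\ref{lem:value_function:proper4}.

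For~\ref{lem:value_function:cvx}, I would first argue that $\overline{\Phi}$ is proper, lsc, and jointly convex on $\YY\times\XX$: properness follows from feasibility of \eqref{eq:P} via \cref{ass:convexP}, lsc from continuity of $c$ and lower semicontinuity of $g$, and joint convexity from the fact that $(u,x)\mapsto c(x)-u$ is $(-\hzn g)$-convex (inheriting from the $(-\hzn g)$-convexity of $c$ and linearity in $u$), so \cref{lem:horizon}\,\ref{item:horizon_monotonicity} makes $g(c(\cdot)-\cdot)$ jointly convex, and adding the convex $f$ preserves this. Then $\mathcal{V}$, as the partial infimum of a jointly convex function, is convex by the classical marginal-convexity result.

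For~\ref{lem:value_function:proper}, the equivalence of (a) and (b) follows from the Fenchel conjugate sum rule applied to the finite-valued convex functions $f$ and $\innprod{\bar y}{c}$ (the latter convex by \cref{lem:convex_setting}\,\ref{item:convex_scalarization} since $\bar y\in\dom g^\conj$); this yields $(f+\innprod{\bar y}{c})^\conj(0) = \inf_w\{f^\conj(-w)+\innprod{\bar y}{c}^\conj(w)\}$, whose finiteness is equivalent to $(-\dom f^\conj)\cap\dom\innprod{\bar y}{c}^\conj\neq\emptyset$. Properness of $\mathcal{V}$ then follows from $\mathcal{V}(0)=\Phi^\star<\infty$ together with the affine minorant $\mathcal{V}(u)\geq \inf_x(f+\innprod{\bar y}{c})(x)-\innprod{\bar y}{u}-g^\conj(\bar y)$ obtained by applying Fenchel's inequality $g(z)\geq\innprod{\bar y}{z}-g^\conj(\bar y)$ inside $\overline{\Phi}(u,x)$ and invoking~(b).

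The hard part is the lsc claim, for which the strategy is to show $\mathcal{V}=\mathcal{V}^{**}$. A direct computation gives $\mathcal{V}^\conj(y) = \overline{\Phi}^\conj(y,0) = g^\conj(-y)-\inf_x\{f(x)-\innprod{y}{c(x)}\}$; condition~(b) ensures $\mathcal{V}^\conj(-\bar y)<\infty$, so $\mathcal{V}^\conj$ is proper and $\mathcal{V}^{**}$ is a proper, lsc, convex minorant of $\mathcal{V}$. To upgrade this to equality, I would argue strong duality for the perturbed problem pointwise in $u$, exploiting that $\bar y$ is a dual-feasible multiplier independently of $u$ and that primal convexity is supplied by \cref{lem:convex_setting}; this should close the gap along the lines of Fenchel--Rockafellar-type strong duality for composite convex programs.

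Finally, the sufficient conditions (c) and (d) both guarantee attainment of the minimum of the convex function $f+\innprod{\bar y}{c}$, trivially implying (b). Under (c), the recession calculus for the sum of finite-valued convex functions gives $(f+\innprod{\bar y}{c})_\infty = f_\infty+\innprod{\bar y}{c}_\infty$, so the hypothesis is equivalent to coercivity of this sum, which secures attainment. Under (d), a stationary minimizer $\bar x$ with multiplier $\bar y\in\optiYY(\bar x)$ satisfies $\nabla(f+\innprod{\bar y}{c})(\bar x)=\nabla_x\LL(\bar x,\bar y)=0$ and $\bar y\in\partial g(c(\bar x))$, so $\bar y\in\dom g^\conj$ and $\bar x$ is a critical, hence global, minimizer of the convex differentiable function $f+\innprod{\bar y}{c}$.
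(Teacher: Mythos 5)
Most of your argument tracks the paper's: convexity of $\mathcal{V}$ as the infimal projection of the jointly convex $\overline{\Phi}$, the conjugate identity $\mathcal{V}^\conj(p)=\overline{\Phi}^\conj(p,0)$ together with $(f+\innprod{\bar y}{c})^\conj(0)=-\inf\{f+\innprod{\bar y}{c}\}$ to link \ref{lem:value_function:proper1} and \ref{lem:value_function:proper2}, and the treatment of \ref{lem:value_function:proper3} (recession calculus and level-coercivity) and \ref{lem:value_function:proper4} (stationarity plus \cref{lem:convex_setting}\,\ref{item:convex_scalarization}) are all sound. Your direct Fenchel-inequality derivation of the affine minorant $\mathcal{V}(u)\geq \inf(f+\innprod{\bar y}{c})-\innprod{\bar y}{u}-g^\conj(\bar y)$ is a nice, more elementary route to properness than the paper's computation of $\overline{\Phi}^\conj(\cdot,0)$ via the composite conjugacy formula of Burke--Hoheisel--Nguyen (which the paper needs anyway to justify the formula for $\mathcal{V}^\conj$ at arbitrary $p$, where $\innprod{-p}{c}$ need not be convex).

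The genuine gap is the lower semicontinuity claim, which you yourself flag as ``the hard part'' and then only sketch. Proving $\mathcal{V}=\mathcal{V}^{\conj\conj}$ by ``strong duality pointwise in $u$, exploiting that $\bar y$ is dual-feasible independently of $u$'' does not work as stated: existence of a dual-feasible point yields exactly the affine minorant (hence properness of $\mathcal{V}$ and of $\mathcal{V}^{\conj\conj}$), but it does not close the duality gap. Zero gap at a given $u$ requires a constraint qualification, and the points where lsc of a marginal function actually fails are boundary points of $\dom\mathcal{V}$ and points with $\mathcal{V}(u)=+\infty$ but $\liminf_{u'\to u}\mathcal{V}(u')<\infty$ --- precisely where no qualification is available and where ``pointwise strong duality'' would have to show the dual value is $+\infty$, which dual feasibility of $\bar y$ cannot deliver. (A marginal function of a proper, lsc, convex bivariate function with $\dom\overline{\Phi}^\conj(\cdot,0)\neq\emptyset$ can indeed fail to be lsc in general, so some structural input beyond weak duality is essential.) The paper instead delegates exactly this step to the infimal-projection theorem \cite[Thm~3.101d)]{hoheisel2019topics}, whose hypothesis $\dom\overline{\Phi}^\conj(\cdot,0)\neq\emptyset$ it verifies through \cite[Cor.\ 3]{BurkeHoheiselNguyen2021}; you either need to invoke such a result or supply the missing argument for lower semicontinuity, since your current plan collapses to re-asserting the claim.
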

\end{mybox}
\begin{proof}
	We want to leverage \cite[Thm~3.101]{hoheisel2019topics}.
	To this end, we use $\overline{\Phi}$ defined in \eqref{eq:composite_mapping_perturbed} and
	note that \eqref{eq:value_function} holds for all $u\in\YY$. 
	Since $\overline{\Phi}$ is proper, lsc, and convex, \cite[Thm~3.101a)]{hoheisel2019topics} already yields part \ref{lem:value_function:cvx}.
	Then \cite[Thm~3.101d)]{hoheisel2019topics} stipulates that
	$\mathcal V$ is proper and lsc if $\dom \overline{\Phi}^\conj(\cdot,0)\neq \emptyset$.
	To compute this conjugate, we want to invoke \cite[Cor.\ 3]{BurkeHoheiselNguyen2021}.
	To this end, we define 
	$\func{\bar{f}}{\YY\times \XX}{\R}$ via $\bar{f}(u,x) \coloneqq f(x)$
	and  
	$\func{\bar{c}}{\YY\times \XX}{\YY}$ via $\bar{c}(u,x) \coloneqq c(x)-u$
	so that 
	\begin{equation}\label{eq:perturbed_problem_composite_setting}
	\overline{\Phi}(u,x)=\bar{f}(u,x)+g(\bar{c}(u,x)).
	\end{equation}
	We first observe that, by the assumptions on $f$, $g$, and $c$,
	the qualification condition in \cite[Eq.\ (17)]{BurkeHoheiselNguyen2021} is trivially satisfied
	(as $\bar{f}$ and $\bar{c}$ have full domain and the image of $\bar{c}$ is the whole space).
	Therefore, as we have $\dom g^\conj \subseteq (\hzn g)^\circ$ from \cref{lem:horizon}\,\ref{item:horizon_explicit},
	\cite[Cor.\ 3]{BurkeHoheiselNguyen2021} yields
	\begin{equation}\label{eq:PsiConj}
		\overline{\Phi}^\conj(p,q)
		=
		\inf_{y,r,s} \left\{g^\conj(y) + \bar{f}^\conj(r,s) + \innprod{y}{\bar{c}}^\conj\left((p-r,q-s)\right)\right\}.
	\end{equation}
	We now notice that
	\[
	\bar{f}^\conj(r,s)=\indicator_{\{0\}}(r) + f^\conj(s).
	\]
	Moreover, a short computation shows that 
	\[
	\innprod{y}{\bar{c}}^\conj(r,s) = \indicator_{\{0\}}(r+y) + \innprod{y}{c}^\conj(s).
	\]
	Inserting in \eqref{eq:PsiConj} now gives
	\begin{align*}
		\overline{\Phi}^\conj(p,0)
		={}&
		\inf_{y,r,s} \left\{g^\conj(y)+\indicator_{\{0\}}(r)+f^\conj(s)+\indicator_{\{0\}}(y+p-r)+ \innprod{y}{c}^\conj(-s)\right\}
		\\
		={}&
		g^\conj(-p)+\inf_{s} \left\{ f^\conj(s)+\innprod{-p}{c}^\conj(-s)\right\}.
	\end{align*}
	This already explains condition \ref{lem:value_function:proper1}.
	Condition \ref{lem:value_function:proper2} is equivalent to~\ref{lem:value_function:proper1}
	as, for $\bar y\in\dom g^\conj$, we have
	\[
	\inf_{s} \left\{ f^\conj(s) + \innprod{\bar y}{c}^\conj(-s)\right\}
	=
	(f+\innprod{\bar y}{c})^\conj(0)
	=
	-\inf \left\{f+ \innprod{\bar y}{c}\right\},
	\]
	where \cite[Thm~16.4]{Rockafellar1970} has been used for the the first identity.
	We observe that \cite[Thm~3.26, Cor.\ 3.27, and Ex.\ 3.29]{rockafellar1998variational} yield 
	that condition \ref{lem:value_function:proper3} implies \ref{lem:value_function:proper2}.
	
	Finally, let $\bar x\in\XX$ be a stationary minimizer of \eqref{eq:P} with associated multiplier $\bar y$.
	We first note that $\bar y\in \dom g^\conj$ (as $\dom \partial g^\conj\subseteq \dom g^\conj$).
	Moreover, 
	$\nabla f(\bar x)+c'(\bar x)^\top\bar y=0$
	implies that $\bar x$ is a minimizer of the smooth convex function $f+\innprod{\bar y}{c}$,
	see \cref{lem:convex_setting}\,\ref{item:convex_scalarization}.
	Hence, condition \ref{lem:value_function:proper4} implies \ref{lem:value_function:proper2}.
\end{proof}

The following example shows how the sufficient conditions for properness 
and lower semicontinuity of $\mathcal{V}$ 
from \cref{lem:value_function} can be verified.  

\begin{mybox}
\begin{example}
	Consider \eqref{eq:P} with 
	\[
	f(x)
	\coloneqq
	\log\left( \sum_{j=1}^n \exp(x_j) \right)
	,\quad
	g(y)
	\coloneqq
	\max_{i=1,\dots, m} \{y_i\}
	,\quad
	c(x)
	\coloneqq
	\left(\frac{1}{2}x^\top Q_ix\right)_{i=1}^m, 
	\]
	where $Q_i\in\R^{n\times n}$ is symmetric and positive semidefinite for all $i=1,\dots,m$.
	Then $\dom g^\conj=\Delta_m$ (see \cite[Ex.\ 4.4.11]{Beck2017}),
	where $\Delta_m\subseteq\YY_+$ is the standard simplex in $\YY$,
	$f_\infty(x) = \max_{j=1,\ldots,n} \{x_j\}$
	(see \cite[Eq.\ 3(5)]{rockafellar1998variational}), and
	for each $y\in \Delta_m$, we have 
	$
	\innprod{y}{c}_\infty
	=
	\sum_{i=1}^m (y_i c_i)_\infty
	=
	\sum_{i=1}^m \indicator_{\ker Q_i}
	=
	\indicator_{\cap_{i=1}^m \ker Q_i},
	$ 
	where \cite[Ex.\ 3.29]{rockafellar1998variational} has been used. 
	Altogether, we find 
	\[
	f_\infty(x)+\innprod{y}{c}_\infty(x)>0 \quad\text{for all } x\in\XX\setminus\{0\}
	\quad\iff\quad
	\bigcap_{i=1}^m \ker Q_i\cap \YY_-=\{0\}.
	\]
\end{example}
\end{mybox}

The next example illustrates that the existence of a stationary minimizer of \eqref{eq:P} is not necessary 
for properness and lower semicontinuity of $\mathcal{V}$.

\begin{mybox}
\begin{example}\label{example:cvx_deg}
	Consider \eqref{eq:P} with $g:=\indicator_{\R_-}$ as well as $\func{f}{\R}{\R}$ and $\func{c}{\R}{\R}$ given by
	\[
		f(x)\coloneqq x,\qquad c(x)\coloneqq x^2.
	\]
	The feasible set of this problem is $\{0\}$ and its solution is $\bar x\coloneqq 0$,
	which is not stationary.
	With $g^\conj=\indicator_{\R_+}$ we find $\bar y \coloneqq \nicefrac 12\in \dom g^\conj$, 
	and $\inf \left\{f + \innprod{\bar y}{c}\right\}= -\nicefrac 12>-\infty$. 
	Now, \cref{lem:value_function} guarantees that $\mathcal V$ is proper and lsc.
	In fact, we have $\dom\mathcal V=\R_+$ and $\mathcal V(u)=-\sqrt u$ for all $u\in\R_+$. 
\end{example}
\end{mybox}

\subsection{Duality}

We now study duality relations between \eqref{eq:P} and some appropriate dual problem.
Of course, these results can, in more abstract form, be already found in \cite[Sec.\ 30]{Rockafellar1970}
for large parts.
However, we derive the essentials here in order to provide a self-contained presentation.

To start, let us introduce a suitable dual problem associated with \eqref{eq:P}.
Let $\func{\dualLL}{\YY}{\R\cup\{-\infty\}}$ be defined by
\begin{equation}\label{eq:PPA:dualL}
\dualLL(y)\coloneqq\inf_x \LL(x,y).
\end{equation}
By definition and \cref{ass:convexP}, $\dualLL$ indeed only takes values in $\R\cup\{-\infty\}$.
Recalling \eqref{eq:saddleLL}, we refer to
\begin{equation}
	\maximize_y ~ \dualLL(y)
	\label{eq:PPA:dual_problem}\tag{D}
\end{equation}
as the (Lagrange) \emphdef{dual problem} of \eqref{eq:P}.

\begin{mybox}
	\begin{lemma}\label{lem:duality}
		Consider \eqref{eq:P} and \eqref{eq:PPA:dual_problem} under \cref{ass:convexP}.
		\begin{enumerate}[label=(\roman{*})]
			\item\label{item:weak_duality} 
				For $x\in \XX$ and $y\in\YY$, we always have $\Phi(x)\geq \dualLL(y)$.
				Particularly, the infimal value of \eqref{eq:P} is not smaller than the supremal value of \eqref{eq:PPA:dual_problem},
				and if $\Phi(x)=\dualLL(y)$, then $x$ is a minimizer of \eqref{eq:P}
				and $y$ is a maximizer of \eqref{eq:PPA:dual_problem}.
			\item\label{item:strong_duality} 
				Assume that \eqref{eq:P} possesses a stationary minimizer.
				Then \eqref{eq:P} and \eqref{eq:PPA:dual_problem} possess solutions,
				and the optimal function values coincide.
		\end{enumerate}
	\end{lemma}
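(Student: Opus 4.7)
The plan is to exploit the saddle-point representation \eqref{eq:saddleLL}, which gives $\Phi(x)=\sup_y \LL(x,y)$, together with the definition $\dualLL(y)=\inf_{x'}\LL(x',y)$ from \eqref{eq:PPA:dualL}. These are the only two ingredients needed for part \ref{item:weak_duality}, while part \ref{item:strong_duality} will follow once we exhibit a pair $(\bar x,\bar y)$ that closes the gap, which the stationarity hypothesis provides.

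For \ref{item:weak_duality}, I would first note that, for any $x\in\XX$ and $y\in\YY$,
\[
	\Phi(x) \;=\; \sup_{y'} \LL(x,y') \;\geq\; \LL(x,y) \;\geq\; \inf_{x'} \LL(x',y) \;=\; \dualLL(y).
\]
Taking the infimum on the left and supremum on the right yields $\inf_x \Phi(x)\geq\sup_y \dualLL(y)$. If $\Phi(x)=\dualLL(y)$, then the chain $\inf_{x'}\Phi(x')\geq \sup_{y'}\dualLL(y')\geq \dualLL(y)=\Phi(x)\geq \inf_{x'}\Phi(x')$ forces equality throughout, so $x$ solves \eqref{eq:P} and $y$ solves \eqref{eq:PPA:dual_problem}.

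For \ref{item:strong_duality}, let $\bar x$ be a stationary minimizer with associated multiplier $\bar y\in\optiYY(\bar x)$, so that $\bar y\in\partial g(c(\bar x))$ and $\nabla_x\LL(\bar x,\bar y)=0$. Since $\partial g(c(\bar x))\subseteq \dom g^\conj$, we have $\bar y\in\dom g^\conj$; hence \cref{lem:convex_setting}\,\ref{item:convex_scalarization} gives convexity of $\innprod{\bar y}{c}$, and therefore of $\LL(\cdot,\bar y)=f+\innprod{\bar y}{c}-g^\conj(\bar y)$. The vanishing gradient then upgrades to global minimality, so
\[
	\dualLL(\bar y) \;=\; \LL(\bar x,\bar y) \;=\; f(\bar x) + \innprod{\bar y}{c(\bar x)} - g^\conj(\bar y).
\]
Now apply Fenchel--Young: $\bar y\in\partial g(c(\bar x))$ is equivalent to $g(c(\bar x))+g^\conj(\bar y)=\innprod{\bar y}{c(\bar x)}$, so the right-hand side collapses to $f(\bar x)+g(c(\bar x))=\Phi(\bar x)$. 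Thus $\dualLL(\bar y)=\Phi(\bar x)$, and the equality clause of \ref{item:weak_duality} yields that $\bar x$ and $\bar y$ are optimal for \eqref{eq:P} and \eqref{eq:PPA:dual_problem} respectively, with equal optimal values.

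The only delicate point is the upgrade from first-order stationarity to global minimality of $\LL(\cdot,\bar y)$: without convexity of the scalarization $\innprod{\bar y}{c}$, the identity $\dualLL(\bar y)=\LL(\bar x,\bar y)$ would fail. This is precisely where \cref{ass:convexP} on the $(-\hzn g)$-convexity of $c$ does the work, via \cref{lem:convex_setting}\,\ref{item:convex_scalarization} and the observation $\bar y\in\dom g^\conj$.
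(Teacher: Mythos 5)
Your proof is correct and follows essentially the same route as the paper: weak duality via the Lagrangian and, for part (ii), convexity of $\LL(\cdot,\bar y)$ from \cref{lem:convex_setting}\,\ref{item:convex_scalarization} plus the Fenchel--Young equality $\innprod{\bar y}{c(\bar x)}=g(c(\bar x))+g^\conj(\bar y)$. The only cosmetic difference is in part \ref{item:weak_duality}, where you invoke the biconjugate identity \eqref{eq:saddleLL} (so implicitly $g=g^{\conj\conj}$) instead of the Fenchel--Young inequality directly; the paper's variant has the minor advantage of working for arbitrary $g$, but under \cref{ass:convexP} both are equivalent.
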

\end{mybox}
\begin{proof}
	Let us start to verify assertion \ref{item:weak_duality}.
	We only prove validity of the estimate $\Phi(x)\geq\dualLL(y)$.
	The additional statements are direct consequences of it.
	Whenever $\Phi(x)=\infty$ or $\dualLL(y)=-\infty$, $\Phi(x)\geq\dualLL(y)$ is trivially satisfied.
	Hence, let us assume that $\Phi(x)<\infty$ and $\dualLL(y)>-\infty$.
	This gives $c(x)\in\dom g$ and $y\in\dom g^\conj$, so that the Fenchel--Young inequality,
	see \cite[Prop.\ 11.3]{rockafellar1998variational}, yields
	\begin{align*}
		\dualLL(y)
		=
		\inf_{x'} \LL(x',y)
		={}&
		\inf_{x'}\left\{f(x')+\innprod{y}{c(x')}\right\}-g^\conj(y) \\
		\leq{}&
		\inf_{x'}\left\{f(x')+\innprod{y}{c(x')}\right\}+g(c(x))-\innprod{y}{c(x)} \\
		\leq{}&
		f(x)+\innprod{y}{c(x)}+g(c(x))-\innprod{y}{c(x)}
		=
		\Phi(x)
		.
	\end{align*}
	
	Let us now turn to assertion \ref{item:strong_duality}.
	The stationarity assumption guarantees the existence of 
	$\bar x\in\XX$ and $\bar y\in\partial g(c(\bar x))$ such that $\nabla_x\LL(\bar x,\bar y)=0$.
	This yields $c(\bar x)\in\dom g$, $\bar y\in\dom g^\conj$, and $\innprod{\bar y}{c(\bar x)}=g(c(\bar x))+g^\conj(\bar y)$,
	again due to \cite[Prop.\ 11.3]{rockafellar1998variational}. 
	Furthermore, due to the convexity of $\LL(\cdot,\bar y)$, which follows from
	\cref{lem:convex_setting}\,\ref{item:convex_scalarization},
	$\nabla_x\LL(\bar x,\bar y)=0$ yields that $\bar x$ is a minimizer 
	of the convex function $\LL(\cdot,\bar y)$,
	i.e., $\dualLL(\bar y)=\LL(\bar x,\bar y)$.
	Combining these facts, we obtain
	\[
		\dualLL(\bar y)
		=
		\LL(\bar x,\bar y)
		=
		f(\bar x) + \innprod{\bar y}{c(\bar x)} - g^\conj(\bar y)
		=
		f(\bar x) + g(c(\bar x))
		=
		\Phi(\bar x),
	\]
	so that assertion \ref{item:strong_duality} follows from \ref{item:weak_duality}.
\end{proof}

Let us note that \cref{lem:duality}\,\ref{item:weak_duality} and its proof work even in the absence of any convexity assumptions
as, particularly, the exploited Fenchel--Young inequality holds for any function $g$ by definition of the conjugate.
Observe that \cref{lem:duality}\,\ref{item:weak_duality} ensures that $\dualLL$ is globally upper bounded 
as \eqref{eq:P} is assumed to be a feasible problem in \cref{ass:convexP}.
Let us also mention that, whenever the convexity assumptions are removed,
\cref{lem:duality}\,\ref{item:strong_duality} does not remain true in general.

\begin{mybox}
	\begin{definition}\label{def:saddle_point}
		A pair $(\bar x,\bar y)\in\XX\times\YY$ is referred to as a \emphdef{saddle point} of $\LL$
		if the following condition holds for all $x\in\XX$ and $y\in\YY$:
		\[
			\LL(\bar x,y)\leq\LL(\bar x,\bar y)\leq\LL(x,\bar y)
			.
		\]
	\end{definition}
\end{mybox}

Let us note that whenever $(\bar x,\bar y)\in\XX\times\YY$ is a saddle point of $\LL$,
then $\LL(\bar x,\bar y)$ is finite since $g^\conj$ is proper
(by properness of $g$ and \cite[Thm~11.1]{rockafellar1998variational}).

\begin{mybox}
	\begin{lemma}\label{lem:duality_saddle_point_form}
		For a pair $(\bar x,\bar y)\in\XX\times\YY$, the following statements are equivalent under \cref{ass:convexP}:
		\begin{enumerate}[label=(\roman{*})]
			\item\label{item:stationarity} 
				$\bar x$ is stationary for \eqref{eq:P} with associated Lagrange multiplier $\bar y$;
			\item\label{item:no_duality_gap}
				$\bar x$ is a minimizer of \eqref{eq:P}, $\bar y$ is a maximizer of \eqref{eq:PPA:dual_problem},
				and $\Phi(\bar x)=\dualLL(\bar y)$;
			\item\label{item:saddle_point}
				$(\bar x,\bar y)$ is a saddle point of $\LL$.
		\end{enumerate}
	\end{lemma}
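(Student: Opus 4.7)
The plan is to prove the implications in a cycle: (i)$\Rightarrow$(ii)$\Rightarrow$(iii)$\Rightarrow$(i). The first implication is essentially what was established in the proof of strong duality (\cref{lem:duality}\,\ref{item:strong_duality}): assuming stationarity of $\bar x$ with multiplier $\bar y$, the Fenchel--Young identity $\innprod{\bar y}{c(\bar x)}=g(c(\bar x))+g^\conj(\bar y)$ together with convexity of $\LL(\cdot,\bar y)$ and the condition $\nabla_x\LL(\bar x,\bar y)=0$ yield $\Phi(\bar x)=\LL(\bar x,\bar y)=\dualLL(\bar y)$, and \cref{lem:duality}\,\ref{item:weak_duality} then delivers optimality of $\bar x$ for \eqref{eq:P} and of $\bar y$ for \eqref{eq:PPA:dual_problem}.

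For (ii)$\Rightarrow$(iii), I would chain the two identities \eqref{eq:saddleLL} and \eqref{eq:PPA:dualL} with the assumed absence of a duality gap: for every $x\in\XX$ and $y\in\YY$,
\[
\LL(\bar x,y)\leq\sup_{y'}\LL(\bar x,y')=\Phi(\bar x)=\dualLL(\bar y)=\inf_{x'}\LL(x',\bar y)\leq\LL(x,\bar y).
\]
Choosing $x=\bar x$ or $y=\bar y$ on the extreme sides and using $\Phi(\bar x)=\dualLL(\bar y)$ collapses all inequalities to equalities at $(\bar x,\bar y)$, which is precisely the saddle-point property of \cref{def:saddle_point}.

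The step (iii)$\Rightarrow$(i) is where the actual work happens and will likely be the main obstacle, though still routine. From the right-hand saddle inequality, $\bar x$ minimizes the function $\LL(\cdot,\bar y)=f+\innprod{\bar y}{c}-g^\conj(\bar y)$, which is finite-valued (as $g^\conj(\bar y)$ must be finite, see the remark preceding the lemma) and, by \cref{lem:convex_setting}\,\ref{item:convex_scalarization} together with $\bar y\in\dom g^\conj$, convex and continuously differentiable in $x$. Hence the first-order condition $\nabla_x\LL(\bar x,\bar y)=0$ is immediate. From the left-hand inequality, $\bar y$ maximizes $\LL(\bar x,\cdot)=f(\bar x)+\innprod{\cdot}{c(\bar x)}-g^\conj(\cdot)$, which gives $c(\bar x)\in\partial g^\conj(\bar y)$ by the Fenchel--Young characterization of subgradients. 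By the inversion rule $y\in\partial g(z)\iff z\in\partial g^\conj(y)$ for proper, lsc, convex $g$, this is equivalent to $\bar y\in\partial g(c(\bar x))$, which (in particular) forces $c(\bar x)\in\dom g$, i.e., $\bar x$ is feasible. Combined with $\nabla_x\LL(\bar x,\bar y)=0$, this is exactly stationarity of $\bar x$ with associated multiplier $\bar y$, closing the cycle.
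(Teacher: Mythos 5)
Your proposal is correct and follows essentially the same cycle (i)$\Rightarrow$(ii)$\Rightarrow$(iii)$\Rightarrow$(i) as the paper, with the same ingredients in each step: the strong-duality computation for the first implication, the chain of identities \eqref{eq:saddleLL} and \eqref{eq:PPA:dualL} for the second, and Fermat's rule on $\LL(\bar x,\cdot)$ plus convexity of $\LL(\cdot,\bar y)$ via \cref{lem:convex_setting}\,\ref{item:convex_scalarization} for the third. The only cosmetic difference is the order within (iii)$\Rightarrow$(i): you justify $\bar y\in\dom g^\conj$ via finiteness of $\LL(\bar x,\bar y)$ before treating the $x$-part, whereas the paper first extracts $c(\bar x)\in\partial g^\conj(\bar y)$ from the $y$-part; both are sound.
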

\end{mybox}
\begin{proof}
	The equivalence follows from a chain of implications.
	
	\ref{item:stationarity}$\implies$\ref{item:no_duality_gap}:
		This implication has been validated in the proof of \cref{lem:duality}\,\ref{item:strong_duality}.
		
	\ref{item:no_duality_gap}$\implies$\ref{item:saddle_point}:
		The assumptions, \eqref{eq:saddleLL}, and \eqref{eq:PPA:dualL} guarantee
		\begin{equation*}
			\dualLL(\bar y)
			=
			\inf_{x'} \LL(x',\bar y)
			\leq
			\LL(\bar x,\bar y)
			\leq
			\sup_{y'}\LL(\bar x,y')
			=
			\Phi(\bar x)
			=
			\dualLL(\bar y).
		\end{equation*}
		Hence, for each pair $(x,y)\in\XX\times\YY$, we find
		\begin{equation*}
			\LL(\bar x,y)
			\leq
			\sup_{y'}\LL(\bar x,y')
			=
			\LL(\bar x,\bar y)
			=
			\inf_{x'}\LL(x',\bar y)
			\leq
			\LL(x,\bar y),
		\end{equation*}
		which shows that $(\bar x,\bar y)$ is a saddle point of $\LL$.
		
	\ref{item:saddle_point}$\implies$\ref{item:stationarity}:
		As $\bar y$ maximizes the concave function $\LL(\bar x,\cdot)$,
		it minimizes the convex function $-\LL(\bar x,\cdot)$.
		Due to Fermat's rule,
		this gives $c(\bar x)\in\partial g^\conj(\bar y)$ or, equivalently under \cref{ass:convexP},
		$\bar y\in\partial g(c(\bar x))$.
		Particularly, $\bar y\in\dom g^\conj$, and \cref{lem:convex_setting}\,\ref{item:convex_scalarization}
		guarantees convexity of $\LL(\cdot,\bar y)$.
		As this function is continuously differentiable,
		the fact that $\bar x$ is one of its minimizers is enough to find
		$\nabla_x\LL(\bar x,\bar y)=0$.
		Hence, $\bar x$ is stationary for \eqref{eq:P} with associated
		multiplier $\bar y$.
\end{proof}

\cref{lem:duality_saddle_point_form} yields a series of interesting
consequences which we record in what follows.

\begin{mybox}
	\begin{corollary}\label{cor:convex_setting_multiplier_set}
		Let \cref{ass:convexP} hold, and
		let $x,x'\in\XX$ be stationary points of \eqref{eq:P}.
		Then $\optiYY(x)=\optiYY(x')$.
	\end{corollary}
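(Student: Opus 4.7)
The plan is to exploit the equivalences in \cref{lem:duality_saddle_point_form} to decouple the primal and dual roles: a multiplier associated with one stationary point is automatically a maximizer of \eqref{eq:PPA:dual_problem}, and any maximizer of \eqref{eq:PPA:dual_problem} pairs with \emph{every} minimizer of \eqref{eq:P} to form a saddle point.

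More concretely, I would proceed in three steps. First, pick an arbitrary $y\in\optiYY(x)$. By the implication \ref{item:stationarity}$\implies$\ref{item:no_duality_gap} of \cref{lem:duality_saddle_point_form}, $x$ is a minimizer of \eqref{eq:P}, $y$ is a maximizer of \eqref{eq:PPA:dual_problem}, and $\Phi(x)=\dualLL(y)$. Second, note that since $x'$ is also stationary, the implication \ref{item:stationarity}$\implies$\ref{item:no_duality_gap} applied to any multiplier of $x'$ gives that $x'$ is a minimizer of \eqref{eq:P} as well, so $\Phi(x')=\Phi(x)=\dualLL(y)$. Third, invoke the implication \ref{item:no_duality_gap}$\implies$\ref{item:stationarity} of \cref{lem:duality_saddle_point_form} to the pair $(x',y)$: since $x'$ minimizes \eqref{eq:P}, $y$ maximizes \eqref{eq:PPA:dual_problem}, and $\Phi(x')=\dualLL(y)$, we conclude that $x'$ is stationary with associated multiplier $y$, i.e., $y\in\optiYY(x')$. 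This gives $\optiYY(x)\subseteq\optiYY(x')$, and the reverse inclusion follows by interchanging the roles of $x$ and $x'$.

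There is really no substantial obstacle: the whole argument is a direct consequence of \cref{lem:duality_saddle_point_form}, which already encapsulates the fact that the dual variable part of a saddle point is independent of the primal part. The only small point to be careful about is that stationarity of $x$ indeed guarantees $\optiYY(x)\neq\emptyset$ (which is immediate from the definition of $\optiYY$), so that the first step can be carried out at all; if $\optiYY(x)$ were empty the claim would be vacuous in one direction but symmetric reasoning still applies.
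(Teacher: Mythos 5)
Your proof is correct and rests on the same key result as the paper's, namely \cref{lem:duality_saddle_point_form}; the only difference is the route through that equivalence. The paper passes through the saddle-point characterization (item \emph{(iii)}), verifying by a chain of four Lagrangian inequalities that $\LL(x^1,y^1)=\LL(x^2,y^1)=\LL(x^1,y^2)=\LL(x^2,y^2)$ and hence that the cross pairs are saddle points, whereas you pass through the zero-duality-gap characterization (item \emph{(ii)}), observing that any multiplier of $x$ is a maximizer of \eqref{eq:PPA:dual_problem} attaining the common optimal value $\Phi^\star$ and therefore pairs with the minimizer $x'$ as well. Your variant is slightly more streamlined, and your closing remark correctly disposes of the only potential gap (nonemptiness of $\optiYY(x)$, which is built into the definition of stationarity).
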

\end{mybox}
\begin{proof}
	It suffices to pick two arbitrary stationary points $x^i\in\XX$, $i=1,2$, and
	to show that, for $y^i\in \optiYY(x^i)$, $i=1,2$,
	we have $y^{3-i}\in \optiYY(x^i)$.
	Due to \cref{lem:duality_saddle_point_form}, for $i=1,2$, $(x^i,y^i)$
	is a saddle point of $\LL$, i.e.,
	for all $x\in\XX$ and $y\in\YY$, we have
	\[
		\LL(x^i,y)\leq\LL(x^i,y^i)\leq\LL(x,y^i).
	\]
	Particularly, for $i=1,2$, we find
	\begin{equation*}
		\LL(x^{i},y^{3-i})
		\leq
		\LL(x^i,y^i)
		\leq
		\LL(x^{3-i},y^{i})
		\leq
		\LL(x^{3-i},y^{3-i})
		\leq
		\LL(x^{i},y^{3-i})
	\end{equation*}
	which gives
	$
		\LL(x^1,y^1)=\LL(x^2,y^1)=\LL(x^1,y^2)=\LL(x^2,y^2)
	$.
	Consequently, for $i=1,2$, $(x^{i},y^{3-i})$ is a saddle point of $\LL$,
	and \cref{lem:duality_saddle_point_form} yields $y^{3-i}\in \optiYY(x^i)$,
	concluding the proof.
\end{proof}

\begin{mybox}
	\begin{corollary}\label{cor:all_minimizers_stationary}
		Let \cref{ass:convexP} hold,
		and assume that \eqref{eq:P} possesses a stationary minimizer.
		Then each minimizer of \eqref{eq:P} is stationary.
	\end{corollary}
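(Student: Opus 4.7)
The plan is to reduce the claim to the equivalence \emph{stationarity $\Leftrightarrow$ no-duality-gap} established in \cref{lem:duality_saddle_point_form}, using the strong duality statement from \cref{lem:duality}\,\ref{item:strong_duality} as the bridge.

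First, I would fix the given stationary minimizer $\bar x$ with some associated multiplier $\bar y\in\optiYY(\bar x)$. By \cref{lem:duality}\,\ref{item:strong_duality}, the dual problem \eqref{eq:PPA:dual_problem} admits a solution and the primal/dual optimal values coincide; inspecting the proof (or combining with \cref{lem:duality_saddle_point_form}\,\ref{item:stationarity}$\Rightarrow$\ref{item:no_duality_gap}) one sees concretely that $\bar y$ itself is a maximizer of \eqref{eq:PPA:dual_problem} and $\dualLL(\bar y)=\Phi(\bar x)=\Phi^\star$.

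Next, let $x^\star\in\XX$ be an arbitrary minimizer of \eqref{eq:P}, so $\Phi(x^\star)=\Phi^\star=\dualLL(\bar y)$. Then the pair $(x^\star,\bar y)$ satisfies condition \ref{item:no_duality_gap} of \cref{lem:duality_saddle_point_form}: $x^\star$ minimizes \eqref{eq:P}, $\bar y$ maximizes \eqref{eq:PPA:dual_problem}, and the optimal values coincide. Invoking the implication \ref{item:no_duality_gap}$\Rightarrow$\ref{item:stationarity} of that lemma immediately yields that $x^\star$ is stationary with associated multiplier $\bar y$, which is the desired conclusion.

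There is essentially no technical obstacle here; the statement is a structural corollary of the saddle-point characterization. The only conceptual point worth underlining is that the proof not only shows stationarity of every minimizer but also demonstrates that the multiplier $\bar y$ of the originally given stationary minimizer serves as a multiplier for \emph{every} minimizer of \eqref{eq:P}, which gives a cleaner and slightly stronger statement than what one extracts from \cref{cor:convex_setting_multiplier_set} alone.
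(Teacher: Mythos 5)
Your proof is correct. Both your argument and the paper's hinge entirely on \cref{lem:duality_saddle_point_form}, but you route through condition~\ref{item:no_duality_gap} (no duality gap) while the paper routes through condition~\ref{item:saddle_point} (saddle point). Concretely: you apply \ref{item:stationarity}$\Rightarrow$\ref{item:no_duality_gap} to the given pair $(\bar x,\bar y)$ to learn that $\bar y$ maximizes \eqref{eq:PPA:dual_problem} with $\dualLL(\bar y)=\Phi^\star$, observe that condition~\ref{item:no_duality_gap} then holds verbatim for $(x^\star,\bar y)$ for \emph{any} minimizer $x^\star$, and invoke \ref{item:no_duality_gap}$\Rightarrow$\ref{item:stationarity}. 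This is essentially immediate and requires no computation. The paper instead verifies directly that $(\tilde x,\bar y)$ is a saddle point of $\LL$, which takes an explicit chain of inequalities using the Fenchel--Young equality $\innprod{\bar y}{c(\bar x)}=g(c(\bar x))+g^\conj(\bar y)$; that work is in effect a re-derivation, for the specific pair at hand, of part of the \ref{item:no_duality_gap}$\Rightarrow$\ref{item:saddle_point} implication already proved in \cref{lem:duality_saddle_point_form}. Your version is the more economical use of the lemma and buys the same conclusion, including the observation that the single multiplier $\bar y$ works for every minimizer (which, combined with \cref{cor:convex_setting_multiplier_set}, is consistent with the paper's definition of $\multYY$); the paper's version has the minor virtue of making visible exactly where convexity of $g$ and the subgradient relation $\bar y\in\partial g(c(\bar x))$ enter.
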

\end{mybox}
\begin{proof}
	Let $\bar x\in\XX$ be a stationary minimizer of \eqref{eq:P},
	and pick another minimizer $\tilde x\in\XX$ of \eqref{eq:P}.
	The assumptions guarantee that we find some $\bar y\in\optiYY(\bar x)$.
	Noting that $(\bar x,\bar y)$ is a saddle point of $\LL$ according to \cref{lem:duality_saddle_point_form},
	we have $\LL(\bar x,\bar y)\leq\LL(\tilde x,\bar y)$.
	Conversely, $\Phi(\bar x)=\Phi^\star=\Phi(\tilde x)$ and $\bar y\in\partial g(c(\bar x))$ yield
	\begin{multline*}
		\LL(\tilde x,\bar y)
		=
		f(\tilde x) + \innprod{\bar y}{c(\tilde x)} - g^\conj(\bar y)
		=
		f(\bar x) + g(c(\bar x))-g(c(\tilde x)) + \innprod{\bar y}{c(\tilde x)} - g^\conj(\bar y)
		\\
		\leq
		f(\bar x) + \innprod{\bar y}{c(\bar x)} - g^\conj(\bar y)
		=
		\LL(\bar x,\bar y).
	\end{multline*}
	Hence, $\LL(\tilde x,\bar y)=\LL(\bar x,\bar y)$, 
	and $\LL(\tilde x,\bar y)\leq\LL(x,\bar y)$ for all $x\in\XX$ follows.
	
	From \cref{lem:duality_saddle_point_form} we also have $\LL(\bar x,\bar y)=\Phi(\bar x)=\Phi^\star$,
	so that \eqref{eq:saddleLL} gives
	\[
		\sup_y\LL(\tilde x,y)
		=
		\Phi(\tilde x)
		=
		\Phi^\star
		=
		\LL(\bar x,\bar y)
		=
		\LL(\tilde x,\bar y).
	\]
	Hence, $(\tilde x,\bar y)$ is a saddle point of $\LL$,
	and applying \cref{lem:duality_saddle_point_form} one last time yields 
	that $\tilde x$ is stationary (with associated multiplier $\bar y$).
\end{proof}

\cref{cor:convex_setting_multiplier_set,cor:all_minimizers_stationary} show that, 
for \eqref{eq:P} in the presence of \cref{ass:convexP},
either all minimizers are stationary and share the same set of multipliers
or all minimizers are nonstationary.
Hence, in order to simplify notation, 
in the situation where \eqref{eq:P} possesses a stationary minimizer $\bar x\in\XX$,
let us introduce 
\[
	\multYY\coloneqq\optiYY(\bar x),
\]
and emphasize that all minimizers of \eqref{eq:P} are stationary with multiplier set $\multYY$
in this case.

\begin{mybox}
	\begin{corollary}\label{cor:solution_set_dual_problem}
		Let \cref{ass:convexP} hold,
		and assume that \eqref{eq:P} possesses a stationary minimizer.
		Then the set of solutions associated with \eqref{eq:PPA:dual_problem} 
		is $\multYY$.
	\end{corollary}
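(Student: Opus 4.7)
The plan is to argue set equality by double inclusion, leveraging the saddle-point characterization in \cref{lem:duality_saddle_point_form} together with the strong-duality part of \cref{lem:duality}. Fix a stationary minimizer $\bar x$ of \eqref{eq:P}, so that $\multYY = \optiYY(\bar x)$ is nonempty by \cref{cor:convex_setting_multiplier_set}, and denote the solution set of \eqref{eq:PPA:dual_problem} by $Y^\star$.

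For the inclusion $\multYY \subseteq Y^\star$, pick $y \in \multYY$. By definition $\bar x$ is stationary with associated multiplier $y$, so the implication \ref{item:stationarity}$\Rightarrow$\ref{item:no_duality_gap} of \cref{lem:duality_saddle_point_form} gives that $y$ maximizes $\dualLL$ (and simultaneously $\Phi(\bar x) = \dualLL(y) = \Phi^\star$). Thus $y \in Y^\star$.

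For the reverse inclusion, let $\tilde y \in Y^\star$. Since \eqref{eq:P} possesses a stationary minimizer, \cref{lem:duality}\,\ref{item:strong_duality} guarantees strong duality, so $\dualLL(\tilde y) = \sup \dualLL = \Phi^\star = \Phi(\bar x)$. This means the pair $(\bar x, \tilde y)$ consists of a primal minimizer, a dual maximizer, and has vanishing duality gap, i.e., condition \ref{item:no_duality_gap} of \cref{lem:duality_saddle_point_form} is satisfied. Invoking \ref{item:no_duality_gap}$\Rightarrow$\ref{item:stationarity}, we conclude that $\bar x$ is stationary with multiplier $\tilde y$, hence $\tilde y \in \optiYY(\bar x) = \multYY$.

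There is no serious obstacle here: the result is essentially a bookkeeping consequence of the equivalences already established. The only point to handle with care is that $\multYY$ is well-defined independently of the choice of stationary minimizer, which is precisely what \cref{cor:convex_setting_multiplier_set,cor:all_minimizers_stationary} were set up to guarantee; everything else is a direct invocation of \cref{lem:duality_saddle_point_form}.
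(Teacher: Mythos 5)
Your proof is correct and follows essentially the same route as the paper: both directions are obtained from the equivalences in \cref{lem:duality_saddle_point_form}, with the forward inclusion via \ref{item:stationarity}$\Rightarrow$\ref{item:no_duality_gap} and the reverse via the common optimal value $\Phi^\star=\dualLL(\bar y)$ shared by all dual maximizers followed by \ref{item:no_duality_gap}$\Rightarrow$\ref{item:stationarity}. The paper's proof is merely a terser rendering of the same argument.
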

\end{mybox}
\begin{proof}
	Let $\bar x\in\XX$ be a minimizer of \eqref{eq:P}.
	Then it is stationary according to \cref{cor:all_minimizers_stationary}
	with multiplier set $\multYY$.
	It is clear from \cref{lem:duality_saddle_point_form} that all vectors in 
	the nonempty set $\multYY$
	are solutions of \eqref{eq:PPA:dual_problem}.
	Let us fix $\bar y\in \multYY$ arbitrarily.
	Pick any $y^\prime\in\YY$ which is a solution of \eqref{eq:PPA:dual_problem}.
	Then $\dualLL(y^\prime)=\dualLL(\bar y)=\Phi(\bar x)$ follows
	from \cref{lem:duality_saddle_point_form}, 
	and this result also yields $y^\prime\in \multYY$.
	Hence, the solution set of \eqref{eq:PPA:dual_problem} is precisely $\multYY$.	
\end{proof}

In the lemma below, we summarize essential properties of the (negative) dual objective function.

\begin{mybox}
	\begin{lemma}\label{lem:properties_dualLL}
		Let \cref{ass:convexP} be valid.
		Then the following assertions hold.
		\begin{enumerate}[label=(\roman{*})]
			\item The function $-\dualLL$ is convex, lsc, and bounded below by $-\Phi^\star$.
			\item\label{item:dualLL_vs_value_function} 
			We have $-\dualLL(y) = \mathcal{V}^\conj(-y)$ for all $y\in\YY$, 
			where $\mathcal V$ is the value function from \eqref{eq:value_function}.
			\item If, additionally, $\mathcal V$ is proper and lsc
			(see \cref{lem:value_function}\,\ref{lem:value_function:proper} for some sufficient conditions),
			then $-\dualLL$ is also proper.
		\end{enumerate}
	\end{lemma}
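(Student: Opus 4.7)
For the first assertion, the plan is to read the three properties off the explicit representation $\LL(x,y) = f(x) + \innprod{y}{c(x)} - g^\conj(y)$. For each fixed $x \in \XX$, this is an affine function of $y$ plus the concave upper semicontinuous function $-g^\conj$, so $-\LL(x,\cdot)$ is convex and lsc. Since by definition $-\dualLL(y) = \sup_{x \in \XX} \{-\LL(x,y)\}$, both properties transfer to $-\dualLL$ because they are preserved under pointwise suprema. The lower bound $-\dualLL(y) \geq -\Phi^\star$ will be immediate from the weak duality estimate $\Phi(x) \geq \dualLL(y)$ in \cref{lem:duality}\,\ref{item:weak_duality} upon taking the infimum over $x$.

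For the second assertion, I would expand $\mathcal{V}^\conj(-y)$ directly from the definitions,
\[
\mathcal{V}^\conj(-y) = \sup_{(u,x) \in \YY \times \XX} \bigl\{ -\innprod{y}{u} - f(x) - g(c(x)-u) \bigr\},
\]
and perform the change of variable $z \coloneqq c(x) - u$ for each fixed $x$. This decouples the joint supremum as
\[
\mathcal{V}^\conj(-y) = \sup_{z \in \YY} \bigl\{ \innprod{y}{z} - g(z) \bigr\} - \inf_{x \in \XX} \bigl\{ f(x) + \innprod{y}{c(x)} \bigr\} = g^\conj(y) - \inf_{x \in \XX} \bigl\{ f(x) + \innprod{y}{c(x)} \bigr\}.
\]
Inserting the identity $\inf_x\{f(x) + \innprod{y}{c(x)}\} = \dualLL(y) + g^\conj(y)$, which reads off the definition of $\dualLL$, then collapses the right-hand side to exactly $-\dualLL(y)$.

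For the third assertion, I would appeal to the Fenchel--Moreau biconjugate theorem: since $\mathcal V$ is convex by \cref{lem:value_function}\,\ref{lem:value_function:cvx} and is additionally assumed proper and lsc, its conjugate $\mathcal{V}^\conj$ is automatically proper, lsc, and convex. Part (ii) then transfers properness to $-\dualLL$ via $-\dualLL(y) = \mathcal V^\conj(-y)$.

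The only step requiring some care is the change of variable in part (ii). Once justified---for each fixed $x$, the map $u \mapsto c(x) - u$ is a bijection of $\YY$ onto itself, so the joint supremum over $(u,x)$ rewrites as a joint supremum over $(z,x)$ and then splits into a sum of two independent suprema---the rest is bookkeeping. No minimax argument is needed, since the swap of $\sup_u$ with $\inf_x$ in $\mathcal V$ is merely the standard rearrangement of a joint supremum after negation.
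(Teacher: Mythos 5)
Your proposal is correct and follows essentially the same route as the paper: part (i) via the pointwise-supremum representation of $-\dualLL$ and weak duality, part (ii) via the substitution $z = c(x)-u$ in the joint extremum defining $\mathcal{V}^\conj(-y)$ (the paper runs the identical computation in the opposite direction, starting from $\dualLL(y)=\inf_{x,z}\{f(x)+g(z)+\innprod{y}{c(x)-z}\}$ and recognizing the partial infimum over $x$ as $\mathcal{V}(u)$), and part (iii) via conjugacy. The only cosmetic difference is in (iii), where you directly invoke that the conjugate of a proper, lsc, convex function is proper, whereas the paper argues by contradiction through the biconjugate $\mathcal{V}^{\conj\conj}=\mathcal{V}$ together with $0\in\dom\mathcal{V}$ --- the same underlying fact, so nothing is gained or lost.
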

\end{mybox}
\begin{proof}
	The first assertion is clear by definition of $\dualLL$ and \cref{ass:convexP}.
	The lower boundedness of $-\dualLL$ directly follows
	from \cref{lem:duality_saddle_point_form}\,\ref{item:weak_duality},
	which yields $-\dualLL(y)\geq-\Phi^\star>-\infty$
	for each $y\in\YY$.
	
	Thus, let us proceed by proving the second assertion.	
	Using the substitution $z\coloneqq c(x)-u$ as well as the definition of the value function $\mathcal V$,
	we find
	\begin{align*}
		\dualLL(y)
		=
		\inf_x \LL(x,y)
		=&
		\inf_{x,z} \left\{ f(x)+g(z)+\innprod{y}{c(x)-z} \right\}
		\\
		={}&
		\inf_{x,u} \left\{ f(x) + g(c(x)-u) + \innprod{y}{u} \right\}
		\\
		={}&
		\inf_u \left\{ \mathcal{V}(u)+\innprod{y}{u} \right\} 
		=
		- \sup_u \left\{ \innprod{-y}{u} - \mathcal{V}(u) \right\}
		=
		- \mathcal{V}^\conj(-y)
	\end{align*}
	for all $y\in\YY$. 
	
	Finally, let us validate the last assertion.
	To prove the properness of $-\dualLL$
	we merely need to show that $\dom(-\dualLL)$ is nonempty.
	Supposing that $-\dualLL\equiv\infty$, we find $\mathcal{V}^\conj\equiv-\infty$ 
	from assertion \ref{item:dualLL_vs_value_function}
	and, thus, $\mathcal{V}^{\conj\conj}\equiv\infty$.
	Then \cite[Thm~12.2]{Rockafellar1970},
	which requires that $\mathcal V$ is proper and lsc, 
	yields $\mathcal V\equiv\infty$.
	This, however, is a contradiction as we have $0 \in \dom \mathcal V$
	since the unperturbed problem \eqref{eq:P} is assumed to possess a minimizer.
\end{proof}

The following result, slightly weaker than \cref{cor:solution_set_dual_problem}, 
relies on the characterizations in \cref{lem:properties_dualLL} and will be important in the course of the paper.

\begin{mybox}
	\begin{lemma}\label{lem:solutions_of_dual_are_multipliers}
		Let \cref{ass:convexP} hold.
		Suppose that \eqref{eq:PPA:dual_problem} possesses a maximizer $\bar y\in\YY$,
		and let $\mathcal V$ be proper and lsc.
		Then each minimizer of \eqref{eq:P} is stationary,
		and $\bar y\in\multYY$ holds.
	\end{lemma}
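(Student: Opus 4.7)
My plan is to reduce the statement to strong duality, which would then let me invoke \cref{lem:duality_saddle_point_form} (the saddle-point characterization of stationarity) for every minimizer of \eqref{eq:P}.

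First, I would observe that $\Phi^\star=\mathcal{V}(0)$ by definition of the value function in \eqref{eq:value_function}. Next, I exploit the identity $-\dualLL(y)=\mathcal{V}^\conj(-y)$ from \cref{lem:properties_dualLL}\,\ref{item:dualLL_vs_value_function}, which immediately gives
\[
    \sup_y \dualLL(y)
    = -\inf_y\mathcal{V}^\conj(-y)
    = -\inf_z\mathcal{V}^\conj(z)
    = \mathcal{V}^{\conj\conj}(0).
\]
Since $\mathcal V$ is convex by \cref{lem:value_function}\,\ref{lem:value_function:cvx} and proper and lsc by hypothesis, the Fenchel--Moreau biconjugate theorem (see, e.g., \cite[Thm~12.2]{Rockafellar1970}) yields $\mathcal{V}^{\conj\conj}=\mathcal V$. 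Hence $\sup_y\dualLL(y)=\mathcal{V}(0)=\Phi^\star$, and as $\bar y$ is assumed to be a maximizer of \eqref{eq:PPA:dual_problem}, strong duality $\dualLL(\bar y)=\Phi^\star$ holds.

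Finally, I pick an arbitrary minimizer $\bar x\in\XX$ of \eqref{eq:P}. Then $\Phi(\bar x)=\Phi^\star=\dualLL(\bar y)$, so condition~\ref{item:no_duality_gap} in \cref{lem:duality_saddle_point_form} is satisfied by the pair $(\bar x,\bar y)$. The equivalence in that lemma then supplies stationarity of $\bar x$ with associated multiplier $\bar y$. Because $\bar x$ was arbitrary among minimizers, \cref{cor:convex_setting_multiplier_set,cor:all_minimizers_stationary} justify writing $\multYY$ for the common multiplier set, and $\bar y\in\multYY$ as claimed.

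The only delicate step is the appeal to $\mathcal{V}^{\conj\conj}=\mathcal V$; this is why the lsc and properness hypotheses on $\mathcal V$ cannot be dispensed with. Everything else is a mechanical unpacking of the duality identities already established in \cref{lem:duality_saddle_point_form,lem:properties_dualLL}, which makes this result essentially a corollary of those two, a slightly weaker sibling of \cref{cor:solution_set_dual_problem} that avoids \emph{a priori} assuming the existence of a stationary minimizer.
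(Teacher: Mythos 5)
Your proposal is correct, and it takes a genuinely different route from the paper. The paper argues at the subdifferential level: from $0\in\partial(-\dualLL)(\bar y)$ it uses the conjugate subgradient inversion $0\in\partial\mathcal V^\conj(-\bar y)\iff-\bar y\in\partial\mathcal V(0)$ (this is where properness and lower semicontinuity of $\mathcal V$ enter), unpacks the resulting subgradient inequality to obtain $(-\bar y,0)\in\partial\overline{\Phi}(0,\bar x)$ for every minimizer $\bar x$, and then invokes the subdifferential chain rule for the perturbed composite $\overline{\Phi}$ --- whose qualification condition holds automatically because of the full perturbation of the argument of $g$ --- to extract $\bar y\in\partial g(c(\bar x))$ and $\nabla_x\LL(\bar x,\bar y)=0$ directly. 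You instead argue at the level of optimal values: the identity $-\dualLL(\cdot)=\mathcal V^\conj(-\cdot)$ plus Fenchel--Moreau (where your lsc/properness hypothesis enters) gives the zero duality gap $\sup\dualLL=\mathcal V^{\conj\conj}(0)=\mathcal V(0)=\Phi^\star$, after which the already-established equivalence \ref{item:no_duality_gap}$\iff$\ref{item:stationarity} in \cref{lem:duality_saddle_point_form} finishes the job. Your route is shorter, reuses more of the machinery already proved in \cref{lem:duality_saddle_point_form,lem:properties_dualLL}, and avoids the explicit composite subdifferential computation; the paper's route is more self-contained at the variational-analysis level and exhibits the multiplier inclusion $\bar y\in\optiYY(\bar x)$ directly rather than through the saddle-point characterization. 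Both are valid, and both use the properness and lower semicontinuity of $\mathcal V$ in an essential (though different) way.
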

\end{mybox}
\begin{proof}
		As $\bar y$ is a minimizer of the convex function $-\dualLL$, we find $0\in\partial(-\dualLL)(\bar y)$.
		We apply \cref{lem:properties_dualLL}\,\ref{item:dualLL_vs_value_function} 
		and \cite[Prop.\ 11.3]{rockafellar1998variational},
		the latter requiring that $\mathcal V$ is proper and lsc,
		to find
		\[
			0 \in \partial(-\dualLL)(\bar y)
			\quad\iff\quad
			0 \in \partial\mathcal{V}^\conj(-\bar y)
			\quad\iff\quad
			-\bar y \in \partial\mathcal{V}(0).
		\]
		By definition of the value function from \eqref{eq:value_function},
		$\overline\Phi$ from \eqref{eq:composite_mapping_perturbed},		
		and the subdifferential, we obtain
		\[
			\overline{\Phi}(u,x)
			\geq
			\mathcal V(u)
			\geq
			\mathcal V(0) - \innprod{\bar y}{u}
			=
			\overline{\Phi}(0,\bar x) - \innprod{\bar y}{u}
		\]
		for all $x\in\XX$, $u\in\YY$, and each minimizer $\bar x\in \XX$ of \eqref{eq:P}.
		This, however, means that $(-\bar y,0)\in\partial\overline{\Phi}(0,\bar x)$.
		Observe that the composite model function $\overline{\Phi}$ 
		(in variables $(u,x)$, see \eqref{eq:perturbed_problem_composite_setting})
		satisfies the qualification condition \eqref{eq:QC}
		at each point of its domain
		due to the full perturbation of the argument of $g$,
		and, hence, we find
		\[
			\partial\overline{\Phi}(u,x)
			\subseteq\{(-y,\nabla_x\LL(x,y))\,|\,y\in\partial g(c(x)-u)\}
		\] 
		for all $x\in\XX$ and $u\in\YY$ such that $c(x)-u\in\dom g$.
		Hence, condition $(-\bar y,0)\in\partial \overline{\Phi}(0,\bar x)$
		yields $\bar y\in\optiYY(\bar x)$.
		The result, thus, is a consequence of \cref{cor:convex_setting_multiplier_set}.
\end{proof}

\section{Classical ALM through the PPA lens}
\label{sec:classical_ALM}

Consider the ALM stated in \cref{alg:ALMclassic}.
It employs the so-called \emphdef{Hestenes--Powell--Rockafellar} update rule for the multiplier in \cref{step:ALMclassic:y},
see \cite{hestenes1969multiplier,powell1969method,rockafellar1973dual},
and allows for a variety of update rules for the penalty parameter.

We recall that, by convexity of $g$, $\LL_\mu(\cdot,y)$ is a continuously differentiable function
for each $\mu>0$.
In the convex environment considered in this paper,
due to \cref{ass:convexP},
it is reasonable to pursue a global minimization approach in the spirit of \cite[Ch.\ 5]{birgin2014practical},
demanding that the augmented Lagrangian subproblems at \cref{step:ALMclassic:subproblem} are solved according to
\begin{equation}\label{eq:subproblem_solution}
	\LL_{\mu_k}(x^k,y^k)
	\leq 
	\inf\LL_{\mu_k}(\cdot,y^k)+\varepsilon_k
\end{equation}
for some bounded or null sequence $\{\varepsilon_k\}$ of nonnegative inexactness parameters.
At the first glance, 
this condition seems to be challenging to test in numerical practice.
However, it is a rather weak criterion, 
and its validity can be checked in terms of valid upper bounds
as mentioned and illustrated in \cite[Sec.~4]{rockafellar1976augmented}.

\begin{algorithm2e}[htb]
	\DontPrintSemicolon
	\KwData{$\mu_0>0$; $y^0\in \YY$; $\beta\in(0,1)$\;}
	\For{$k = 0,1,2\ldots$}{
		Select $\varepsilon_k\geq 0$ and compute $x^k\in \XX$ such that \eqref{eq:subproblem_solution} holds.
		\label{step:ALMclassic:subproblem}%
		\;
		Set 
		\label{step:ALMclassic:z}%
		$z^k\coloneqq \prox_{\mu_k g}(c(x^k) + \mu_k y^k)$.%
		\;
		Set 
		\label{step:ALMclassic:y}%
		$y^{k+1} \coloneqq y^k + \mu_k^{-1} [c(x^k) - z^k]$.%
		\;
		Set $\mu_{k+1} \coloneqq \beta_k \mu_k$ for some $\beta_k\in(0,\beta]\cup\{1\}$.
	}
	\caption{Classical ALM for \eqref{eq:P}.}
	\label{alg:ALMclassic}
\end{algorithm2e}

Note that the sequence $\{\mu_k\}$ of penalty parameters in \cref{alg:ALMclassic}
is monotonically nonincreasing and stays bounded.
Hence, it converges (to some point in $[0,\mu_0]$).
Forthwith, we are concerned with three well-known update rules for the penalty parameter
in \cref{alg:ALMclassic}:
\begin{mybox}
\begin{setting}\label{set:constant_penalty_parameter}
	We have $\beta_k\coloneqq 1$ for all $k\in\N_0$.
\end{setting}
\begin{setting}\label{set:strictly_decreasing_penalty_parameter}
	There is some $\beta\in(0,1)$ such that $\beta_k\in(0,\beta]$ for each $k\in\N_0$.
\end{setting}
\begin{setting}\label{set:algencan_penalty_parameter}
	There are some $\beta,\theta\in(0,1)$ such that, for each $k\in\N_0$,
	\[
	\mu_{k+1}
	\coloneqq 
	\begin{cases}
		\mu_k	&	\text{if }k=0\text{ or }\norm{c(x^k)-z^k}\leq\theta\norm{c(x^{k-1})-z^{k-1}},
		\\
		\beta\,\mu_k	&	\text{otherwise.}
	\end{cases}
	\]
\end{setting}
\end{mybox}
Note that $\mu_k$ remains constant in \cref{set:constant_penalty_parameter},
i.e., $\mu_k=\mu_0$ holds for all $k\in\N$, whereas
$\mu_k\downtoneq 0$ in \cref{set:strictly_decreasing_penalty_parameter}.
The update rule in \cref{set:algencan_penalty_parameter} is implemented 
in the popular solver \textsc{Algencan}, see \cite{andreani2008augmented,birgin2014practical}.
Moreover, these settings cover also the globally convergent scheme of Conn, Gould, and Toint \cite{conn1991globally}, 
known as \emphdef{bound-constrained Lagrangian} (BCL) algorithm and implemented in the \textsc{Lancelot} software package.
Their technique monitors the constraint violation and updates the Lagrange multiplier estimates 
only if the primal update achieves sufficient improvement in feasibility, 
otherwise the penalty parameter is reduced, see \cite[Alg.\ 1]{conn1991globally}.
For our purposes, the always-updating \cref{alg:ALMclassic} can be seen as executing the BCL scheme, 
after extracting the subsequence with updates of multiplier estimates.

\subsection{Dual updates as inexact PPA}

When applied to the minimization of the convex function $-\dualLL$,
\cref{alg:PPA} can be defined by the recursion
\begin{equation}\label{eq:ALM_as_PPA}
	y^{k+1}
	\in
	\prox_{- \nicefrac{1}{\mu_k} \dualLL}^{\varepsilon_k}( y^{k} )
\end{equation}
with an upper bounded sequence of (inverse) positive stepsizes $\{\mu_k\}$.
In the upcoming theorem,
we will see that this iterative procedure is strongly related to the ALM from \cref{alg:ALMclassic}.
Our proof follows the one of \cite[Prop.\ 6]{rockafellar1976augmented},
see \cite[Prop.\ 1.3]{andrews2025augmented} as well,
and is added for the convenience of the reader.

\begin{mybox}
	\begin{theorem}\label{thm:inexactPPA:saddle}
		Let $\hat{y} \in \YY$, $\mu>0$, and $\varepsilon \geq 0$ be fixed.
		Suppose that \cref{ass:convexP} is satisfied and that $\bar{x}\in\XX$ is chosen such that 
		$\LL_\mu(\bar{x},\hat{y}) \leq \inf \LL_\mu(\cdot,\hat{y}) + \varepsilon$.
		Set $\bar{z} \coloneqq \prox_{\mu g}(c(\bar{x}) + \mu \hat{y})$
		and $\bar{y} \coloneqq \hat{y} + \mu^{-1}[c(\bar{x}) - \bar{z}]$.
		Then $\bar{y}\in \prox_{- \nicefrac{1}{\mu} \dualLL}^\varepsilon(\hat{y})$.
	\end{theorem}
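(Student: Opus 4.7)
The strategy is to view both $\bar{y}$ and the exact prox step $y^\ast \coloneqq \prox_{-\mu^{-1}\dualLL}(\hat{y})$ as inner extremizers of a single convex--concave saddle function, and then to propagate the primal $\varepsilon$-suboptimality of $\bar{x}$ to a distance bound on the dual side via strong concavity. Accordingly, I would set $G(y) \coloneqq -\dualLL(y) + \tfrac{\mu}{2}\|y-\hat{y}\|^2$, so that $y^\ast = \argmin G$ and $G$ is $\mu$-strongly convex. Unwinding \eqref{eq:inexact_prox_mapping}, the conclusion is equivalent to $\tfrac{\mu}{2}\|\bar{y}-y^\ast\|^2 \leq \varepsilon$.

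The next step exploits the saddle representation~\eqref{eq:rep_AL_func}. Introduce $\Psi(x,y) \coloneqq \LL(x,y) - \tfrac{\mu}{2}\|y-\hat{y}\|^2$, so that $\LL_\mu(x,\hat{y}) = \sup_y \Psi(x,y)$ and $\Psi(x,\cdot)$ is $\mu$-strongly concave for every $x$. A short Moreau--Fenchel computation based on the defining relation $\bar{z}+\mu\bar{y}=c(\bar{x})+\mu\hat{y}$ identifies $\bar{y}$ as the unique maximizer of $\Psi(\bar{x},\cdot)$; hence $\Psi(\bar{x},\bar{y})=\LL_\mu(\bar{x},\hat{y})$. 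Strong concavity at this maximizer then yields
\[
\tfrac{\mu}{2}\|\bar{y}-y^\ast\|^2
\leq
\Psi(\bar{x},\bar{y}) - \Psi(\bar{x},y^\ast)
=
\LL_\mu(\bar{x},\hat{y}) - \Psi(\bar{x},y^\ast),
\]
while the trivial lower bound $\Psi(\bar{x},y^\ast) \geq \inf_x \Psi(x,y^\ast) = \dualLL(y^\ast) - \tfrac{\mu}{2}\|y^\ast - \hat{y}\|^2 = -G(y^\ast)$ reduces the task to $\tfrac{\mu}{2}\|\bar{y}-y^\ast\|^2 \leq \LL_\mu(\bar{x},\hat{y})+G(y^\ast)$.

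The decisive ingredient is the strong duality identity $-G(y^\ast) = \inf_x \LL_\mu(\cdot,\hat{y})$. I would derive it from the relation $-\dualLL(\cdot) = \mathcal V^\conj(-\cdot)$ of \cref{lem:properties_dualLL}\,(ii) combined with the Moreau-envelope--conjugate formula $\varphi^\gamma = (\varphi^\conj + \tfrac{\gamma}{2}\|\cdot\|^2)^\conj$: after completing the square, both $G(y^\ast) = (-\dualLL)^{\mu^{-1}}(\hat{y})$ and $\inf_x \LL_\mu(x,\hat{y}) = \mathcal V^\mu(-\mu\hat{y}) - \tfrac{\mu}{2}\|\hat{y}\|^2$ are exhibited as (sign-flipped) Fenchel transforms of $\mathcal V^\conj + \tfrac{1}{2\mu}\|\cdot\|^2$ at $\hat{y}$, and the identity follows. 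Substituting back, the assumption $\LL_\mu(\bar{x},\hat{y}) \leq \inf \LL_\mu(\cdot,\hat{y})+\varepsilon$ delivers $\tfrac{\mu}{2}\|\bar{y}-y^\ast\|^2 \leq \varepsilon$, i.e., $\bar{y} \in \prox_{-\mu^{-1}\dualLL}^\varepsilon(\hat{y})$. The principal obstacle is this strong duality step, as it rests on properness and lower semicontinuity of $\mathcal V$ (cf. \cref{lem:value_function}\,(ii))---which is implicitly granted by the well-posedness of $\prox_{-\mu^{-1}\dualLL}(\hat{y})$ in the claim, but merits an explicit justification; the saddle identification of $\bar{y}$ and the strong-concavity estimate are otherwise routine.
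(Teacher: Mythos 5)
Your argument is correct and, in its architecture, essentially the paper's own: both proofs rest on the representation \eqref{eq:rep_AL_func}, identify $\bar y$ as the inner maximizer of $\LL(\bar x,\cdot)-\tfrac{\mu}{2}\norm{\cdot-\hat y}^2$ via the prox relation, bound $\Psi(\bar x,y^\ast)$ from below by weak duality, use the minimax identity $\inf\LL_\mu(\cdot,\hat y)=\sup_y\{\dualLL(y)-\tfrac{\mu}{2}\norm{y-\hat y}^2\}=-G(y^\ast)$ as the crux, and convert the resulting value gap into the distance bound $\tfrac{\mu}{2}\norm{\bar y-y^\ast}^2\leq\varepsilon$ (you via $\mu$-strong concavity at the maximizer, the paper via linearizing $\LL_\mu(\bar x,\cdot)$ at $\hat y$ and optimizing the free variable --- the same estimate in different clothing). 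The one genuine divergence is how the minimax identity is justified: the paper invokes Rockafellar's minimax theorem \cite[Thm~37.3]{Rockafellar1970}, verified through a recession-cone argument for the uniformly convex coupling term, and thereby never touches the value function; you route through $-\dualLL=\mathcal V^\conj(-\cdot)$ and Moreau-envelope conjugacy, which forces you to assume $\mathcal V$ proper and lsc --- a hypothesis the theorem does not make. Your own caveat is apt but slightly optimistic: well-posedness of $\prox_{-\nicefrac{1}{\mu}\dualLL}(\hat y)$ only gives properness of $-\dualLL$, hence of $\mathcal V^{\conj\conj}$, not lower semicontinuity of $\mathcal V$ itself; you would need to argue additionally that $\mathcal V^\mu=(\mathcal V^{\conj\conj})^\mu$ (which does hold for proper convex $\mathcal V$, since adding the finite continuous quadratic commutes with closure and infima are preserved under closure). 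The paper's route via \cite[Thm~37.3]{Rockafellar1970} sidesteps this bookkeeping entirely, which is why it is the cleaner choice here.
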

\end{mybox}
\begin{proof}
		For later use, let us denote the exact proximal point by
		$
			y_{\rm e}\coloneqq \prox_{-\nicefrac{1}{\mu}\dualLL}(\hat y).
		$
		Differentiability of the Moreau envelope yields
		\begin{equation}\label{eq:der_Moreau_envelope}
			\nabla_y\LL_\mu(\bar x,\hat y)
			=
			c(\bar x)+\mu\hat y - \bar z - \mu\hat y
			=
			c(\bar x) - \bar z
			=
			\mu(\bar y - \hat y),
		\end{equation}
		see \cite[Prop.\ 12.29]{BauschkeCombettes2011}.
		Furthermore, let us recall from \cref{lem:convex_setting} that $\LL_\mu(\bar x,\cdot)$ is concave.
		Hence, for some arbitrary $y\in\YY$, we find the estimate
		\begin{equation*}
			\LL_\mu(\bar x,\hat y) + \innprod{\mu(\bar y-\hat y)}{y-\hat y}
			=
			\LL_\mu(\bar x,\hat y) + \innprod{\nabla_y\LL_\mu(\bar x,\hat y)}{y-\hat y} 
			\geq
			\LL_\mu(\bar x,y)
			\geq
			\inf\LL_\mu(\cdot,y).
		\end{equation*}
		Furthermore, \cite[Lem.\ 36.1]{Rockafellar1970} 
		and the definition of $\dualLL$ give
		\begin{multline*}
			\inf\LL_\mu(\cdot,y)
			\overset{\eqref{eq:rep_AL_func}}{=}{}
			\inf_x\sup_z\left\{\LL(x,z) - \frac{\mu}{2}\norm{z-y}^2\right\}
			\\
			\geq
			\sup_z\inf_x\left\{\LL(x,z) - \frac{\mu}{2}\norm{z-y}^2\right\}
			\overset{\eqref{eq:PPA:dualL}}{=}{}
			\sup_z\left\{\dualLL(z) - \frac{\mu}{2}\norm{z-y}^2\right\}
			\\
			\geq 
			\dualLL(y_{\rm e}) - \frac{\mu}{2}\norm{y_{\rm e}-y}^2,
		\end{multline*}
		showing validity of the estimate
		\[
			\LL_\mu(\bar x,\hat y) + \innprod{\mu(\bar y-\hat y)}{y-\hat y}
			\geq
			\dualLL(y_{\rm e}) - \frac{\mu}{2}\norm{y_{\rm e}-y}^2
		\]
		for each $y\in\YY$.
		Similarly, using the definition of $y_{\rm e}$, we find
		\begin{multline*}
			\inf\LL_\mu(\cdot,\hat y)
			\overset{\eqref{eq:rep_AL_func}}{=}{}
			\inf_x\sup_z\left\{\LL(x,z) - \frac{\mu}{2}\norm{z-\hat y}^2\right\}
			\\
			=
			\sup_z\inf_x\left\{\LL(x,z) - \frac{\mu}{2}\norm{z-\hat y}^2\right\}
			\overset{\eqref{eq:PPA:dualL}}{=}{}
			\sup_z\left\{\dualLL(z) - \frac{\mu}{2}\norm{z-\hat y}^2\right\}
			\\
			=
			\dualLL(y_{\rm e}) - \frac{\mu}{2}\norm{y_{\rm e}-\hat y}^2
		\end{multline*}
		with the aid of \cite[Thm~37.3]{Rockafellar1970},
		which applies as
		$\nicefrac{\mu}{2}\norm{\cdot -\hat y}^2-\LL(x,\cdot)$ is uniformly convex for each $x\in\XX$
		yielding that its level sets are bounded and, thus,
		that its recession function is the indicator function of the origin,
		see \cite[Thm~8.7]{Rockafellar1970}.
		Hence, for each $y\in\YY$, the above and some simple calculations yield
		\begin{align*}
			\varepsilon
			\geq{}&
			\LL_\mu(\bar x,\hat y) - \inf\LL_\mu(\cdot,\hat y)
			\\
			\geq{}&
			\frac{\mu}{2}\left(
				\norm{y_{\rm e}-\hat y}^2 - 2\innprod{\bar y-\hat y}{y-\hat y} - \norm{y_{\rm e}-y}^2
			\right)
			=
			\frac{\mu}{2}\left(2\innprod{y_{\rm e}-\bar y}{y-\hat y}-\norm{y-\hat y}^2\right).
		\end{align*}
		Choosing $y\coloneqq y_{\rm e}-\bar y+\hat y$, the right-hand side is maximized, 
		and we end up with the estimate $\varepsilon\geq\nicefrac{\mu}{2}\norm{y_{\rm e}-\bar y}^2$
		yielding the claim by definition of $y_{\rm e}$ and \eqref{eq:inexact_prox_mapping}.
\end{proof}

Equipped with \cref{thm:inexactPPA:saddle}, convergence guarantees for \cref{alg:ALMclassic} can be established 
based on its equivalence to the inexact PPA in \cref{alg:PPA} in the form \eqref{eq:ALM_as_PPA} 
thanks to \cref{lem:PPA} whenever $-\dualLL$ is proper.
If $-\dualLL$ is not proper (meaning that $-\dualLL\equiv\infty$),
then the value function $\mathcal V$ from \eqref{eq:value_function} 
is likely to be not proper as well, see \cref{lem:properties_dualLL},
and the subproblems under consideration must be unbounded in this situation,
see \cite[Appx~B]{andrews2025augmented}.
Hence, in order to ensure properness of $-\dualLL$ and, thus, justify an application of \cref{alg:ALMclassic},
we will assume throughout that the value function $\mathcal V$ from \eqref{eq:value_function}
is proper and lsc, see \cref{lem:properties_dualLL} again.
According to \cref{lem:value_function},
these properties are inherent whenever \eqref{eq:P} possesses a stationary minimizer.
However, \cref{lem:value_function} also provides sufficient criteria for the validity
of these properties in the irregular situation where each minimizer of \eqref{eq:P}
is nonstationary.
In our convergence analysis, both situations will be handled, but separately.

\subsection{Convergence analysis}

Now, we are in a position to generalize the classical convergence result 
from \cite[Thm~2.1]{Rockafellar1973}, see \cite[Thm~4]{rockafellar1976augmented} as well,
which addresses standard convex nonlinear optimization problems with inequality constraints,
to the fully convex composite setting.

In the light of \cref{lem:PPA}, it is sufficient to choose the sequence $\{\varepsilon_k\}$ 
such that $\{ \sqrt{\varepsilon_k/\mu_k} \}$ is summable 
in order to find the convergence of $\{y^k\}$
provided that \eqref{eq:P} possesses a stationary minimizer.
Note that constructing such a sequence is always possible; 
among others, a trivial choice is $\varepsilon_k\coloneqq \nicefrac{\mu_k}{2^{2k}}$ for all $k\in\N_0$.
\cref{thm:global_convergence} summarizes the analysis with inexact subsolves 
and general nonincreasing sequence $\{\mu_k\}$ of penalty parameters.
Its proof invokes the estimate 
\begin{equation}\label{eq:bound_auglag_optivalue}
	\LL_{\mu_k}(x^k,y^k)
	\leq
	\inf\LL_{\mu_k}(\cdot,y^k)+\varepsilon_k
	\leq
	\inf \Phi + \varepsilon_k
	=
	\Phi^\star + \varepsilon_k,
\end{equation}
which follows from \eqref{eq:subproblem_solution} and \cite[Lem.\ 3.2]{DeMarchiMehlitz2024},
and holds regardless of the feasibility of \eqref{eq:P}.

\begin{mybox}
	\begin{theorem}\label{thm:global_convergence}
		Suppose that \cref{ass:convexP} holds and \eqref{eq:P} possesses a stationary minimizer. 
		Let $\{(x^k,z^k,y^k)\}$ be a sequence generated by \cref{alg:ALMclassic}
		such that
		\begin{equation}\label{eq:summability}
			\sum_{k=0}^\infty\sqrt{\varepsilon_k/\mu_k} < \infty.
		\end{equation}
		Then $\{y_k\}$ converges to some vector in $\multYY$,
		and we have $c(x^k)-z^k\to 0$ and
		\begin{equation}\label{eq:upper_limit_optimal}
			\limsup_{k\to\infty}\left( f(x^k)+g(z^k) \right)\leq \Phi^\star.
		\end{equation}
		Particularly, each accumulation point $\bar x\in\XX$ of $\{x^k\}$ is a minimizer of \eqref{eq:P}.
		For each subsequence $\{x^k\}_{k\in K}$ such that $x^k\to_K\bar x$,
		we have $z^k\to_K c(\bar x)$ and $f(x^k)+g(z^k)\to_K\Phi^\star$.
	\end{theorem}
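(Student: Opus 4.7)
The plan is to identify the dual update in \cref{alg:ALMclassic} as an inexact PPA applied to the proper, lsc, convex function $-\dualLL$, then to combine the general PPA convergence result \cref{lem:PPA} with an elementary algebraic identity relating $\LL_{\mu_k}(x^k,y^k)$ to $f(x^k)+g(z^k)$ in order to read off the primal conclusions. The hardest step will be the primal estimate in paragraph~3, where we need an \emph{upper} bound on $f(x^k)+g(z^k)$ that survives an additional penalty-weighted term arising from the identity; this term vanishes only after the dual convergence has been secured.

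For the dual-side reduction, \cref{thm:inexactPPA:saddle} applied with $(\hat y,\mu,\varepsilon)=(y^k,\mu_k,\varepsilon_k)$ shows $y^{k+1}\in\prox^{\varepsilon_k}_{-\nicefrac{1}{\mu_k}\dualLL}(y^k)$, so $\{y^k\}$ coincides with an iterate of \cref{alg:PPA} applied to $\varphi\coloneqq -\dualLL$ with stepsizes $\gamma_k\coloneqq 1/\mu_k\geq 1/\mu_0>0$. Condition \eqref{eq:summability} is exactly $\sum\sqrt{\gamma_k\varepsilon_k}<\infty$. Since a stationary minimizer is assumed to exist, $-\dualLL$ is proper, lsc, convex, and bounded below by $-\Phi^\star$ (\cref{lem:value_function}\,\ref{lem:value_function:proper4} combined with \cref{lem:properties_dualLL}), with minimizer set $\multYY$ (\cref{cor:solution_set_dual_problem}). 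Hence \cref{lem:PPA}\,\ref{item:PPA_converges} gives $y^k\to\bar y\in\multYY$, and \cref{lem:PPA}\,\ref{item:PPA_summable} gives $\gamma_k^{-1}(y^{k+1}-y^k)=\mu_k(y^{k+1}-y^k)\to 0$. Combining with the update formula $c(x^k)-z^k=\mu_k(y^{k+1}-y^k)$ from \cref{step:ALMclassic:y} yields $c(x^k)-z^k\to 0$.

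For the primal estimate, since $z^k$ attains the inner infimum in \eqref{eq:ALMz:AugLagrangian}, substituting $c(x^k)-z^k=\mu_k(y^{k+1}-y^k)$ and expanding $\innprod{y^k}{c(x^k)-z^k}+(2\mu_k)^{-1}\norm{c(x^k)-z^k}^2$ produces the identity
\[
\LL_{\mu_k}(x^k,y^k)=f(x^k)+g(z^k)+\tfrac{\mu_k}{2}\bigl(\norm{y^{k+1}}^2-\norm{y^k}^2\bigr).
\]
On the other hand, for any stationary minimizer $\bar x$ of \eqref{eq:P}, testing the Moreau envelope $g^{\mu_k}(c(\bar x)+\mu_k y^k)$ with $w\coloneqq c(\bar x)$ yields $\LL_{\mu_k}(\bar x,y^k)\leq\Phi^\star$, whence $\inf\LL_{\mu_k}(\cdot,y^k)\leq\Phi^\star$. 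Combining with \eqref{eq:subproblem_solution} gives
\[
f(x^k)+g(z^k)\leq \Phi^\star+\varepsilon_k-\tfrac{\mu_k}{2}\bigl(\norm{y^{k+1}}^2-\norm{y^k}^2\bigr).
\]
Since $\varepsilon_k\to 0$, $\{\mu_k\}\subseteq(0,\mu_0]$, and $\norm{y^{k+1}}-\norm{y^k}\to 0$ (from $y^k\to\bar y$), the $\limsup$ bound \eqref{eq:upper_limit_optimal} follows.

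Finally, for any subsequence $x^k\to_K\bar x$, continuity of $c$ gives $c(x^k)\to_K c(\bar x)$, and then $c(x^k)-z^k\to 0$ forces $z^k\to_K c(\bar x)$. Continuity of $f$ and lower semicontinuity of $g$ then yield
\[
\Phi(\bar x)=f(\bar x)+g(c(\bar x))\leq\liminf_{K\ni k\to\infty}\bigl(f(x^k)+g(z^k)\bigr)\leq\limsup_{k\to\infty}\bigl(f(x^k)+g(z^k)\bigr)\leq\Phi^\star.
\]
Therefore $\bar x$ is a minimizer of \eqref{eq:P}, and all three inequalities above must be equalities along $K$, so $f(x^k)+g(z^k)\to_K\Phi^\star$, completing the proof.
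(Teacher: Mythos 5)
Your proof is correct and follows essentially the same route as the paper: reduce the dual update to the inexact PPA via \cref{thm:inexactPPA:saddle}, invoke \cref{lem:PPA} together with \cref{cor:solution_set_dual_problem} for dual convergence to $\multYY$, and then extract the primal conclusions from the subproblem inexactness bound and lower semicontinuity of $g$. The only cosmetic differences are that you write the primal estimate with the term $\tfrac{\mu_k}{2}(\norm{y^{k+1}}^2-\norm{y^k}^2)$ (the identity the paper reserves for the irregular case, \cref{thm:global_convergence_irregular}) rather than $\innprod{y^k}{c(x^k)-z^k}$, and that you verify $\LL_{\mu_k}(\bar x,y^k)\leq\Phi^\star$ directly instead of citing the corresponding lemma; both are equivalent.
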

\end{mybox}
\begin{proof}
	From \cref{lem:PPA,thm:inexactPPA:saddle} we know that $\{y^k\}$ converges
	to some maximizer $\bar y\in\YY$ of \eqref{eq:PPA:dual_problem},
	and $\bar y$ belongs to $\multYY$, see \cref{cor:solution_set_dual_problem}.

	Next, we observe that
	\[
		c(x^k)-z^k
		=
		\mu_k (y^{k+1}-y^k)
		\to
		0
	\]
	holds due to the convergence of $\{y^k\}$
	and the fact that $\{\mu_k\}$ is bounded.
	Furthermore, \eqref{eq:bound_auglag_optivalue} gives
	\begin{multline*}
		f(x^k)+g(z^k)+\innprod{y^k}{c(x^k)-z^k}
		\leq
		f(x^k)+g(z^k)+\innprod{y^k}{c(x^k)-z^k}+\frac1{2\mu_k}\norm{c(x^k)-z^k}^2
		\\
		=
		\LL_{\mu_k}(x^k,y^k)
		\leq
		\Phi^\star+\varepsilon_k.
	\end{multline*}
	Passing to the upper limit as $k\to\infty$ while using $c(x^k)-z^k\to 0$, $y^k\to\bar{y}$,
	and $\varepsilon_k\downto 0$ (the latter follows from \eqref{eq:summability} and boundedness of $\{\mu_k\}$),
	we find \eqref{eq:upper_limit_optimal}.
	
	Finally, let $\bar x\in\XX$ be the limit of the subsequence $\{x^k\}_{k\in K}$ 
	for some infinite set $K\subseteq\N$.
	Then $c(x^k)\to_K c(\bar x)$, and, due to $c(x^k)-z^k\to 0$, $z^k\to_K c(\bar x)$ follows.
	Hence, \eqref{eq:upper_limit_optimal} and the lower semicontinuity of $g$ yield
	\[
		\Phi(\bar x)
		\leq
		\lim_{k\to_K\infty}f(x^k)+\liminf_{k\to_K\infty}g(z^k)
		\leq
		\limsup_{k\to\infty}\left( f(x^k)+g(z^k) \right)
		\leq
		\Phi^\star.
	\]
	As this implicitly requires $\bar x\in c^{-1}(\dom g)$, 
	optimality of $\bar x$ for \eqref{eq:P} follows.
\end{proof}

Let us note that, in the setting of \cref{thm:global_convergence},
each accumulation point of $\{x^k\}$ is in fact a stationary minimizer
of \eqref{eq:P} and the limit of $\{y^k\}$ is an associated multiplier,
see \cref{cor:convex_setting_multiplier_set,cor:all_minimizers_stationary}.

\begin{mybox}
\begin{remark}\label{rem:ks_example_nonconvex}
	In \cite{kanzow2017example}, Kanzow and Steck consider \eqref{eq:P} with $g\coloneqq\indicator_{\R_-}$
	as well as $\func{f}{\R}{\R}$ and $\func{c}{\R}{\R}$ given by
	\[
		f(x)\coloneqq x,\qquad c(x)\coloneqq 1-x^3.
	\]
	The feasible set of the associated problem \eqref{eq:P} equals the convex set $[1,\infty)$,
	so \eqref{eq:P} is a convex optimization problem.
	Its unique minimizer $\bar x\coloneqq 1$ is stationary with multiplier $\bar y\coloneqq\nicefrac13$.
	The considerations in \cite[\S 3.1]{kanzow2017example} illustrate that \cref{alg:ALMclassic}
	in \cref{set:algencan_penalty_parameter} generates a primal sequence with an infeasible accumulation point.
	
	We note that the functional description of the associated problem \eqref{eq:P} involves
	the nonconvex function $c$, so \cref{ass:convexP} is violated here as $\hzn g = \R_-$.
	As \cref{thm:global_convergence} shows, there does not exist an instance of \eqref{eq:P}
	satisfying \cref{ass:convexP} and possessing a stationary minimizer such that \cref{alg:ALMclassic}
	generates a sequence with infeasible primal accumulation points.
	In that sense, the example from \cite{kanzow2017example} is minimal.
\end{remark}
\end{mybox}

It may happen that \eqref{eq:P} merely possesses nonstationary minimizers,
in which case \cref{thm:global_convergence} does not provide any information.
However, from \cref{lem:PPA} it is still possible to infer primal convergence guarantees.
Note that, by \cref{cor:all_minimizers_stationary}, requiring that all minimizers
are (non)stationary is not restrictive.

\begin{mybox}
	\begin{theorem}\label{thm:global_convergence_irregular}
		Suppose that \cref{ass:convexP} holds.
		Furthermore, let all minimizers of \eqref{eq:P} be nonstationary,
		and let the value function $\mathcal V$ from \eqref{eq:value_function}
		be proper and lsc.
		Let $\{(x^k,z^k,y^k)\}$ be a sequence generated by \cref{alg:ALMclassic}
		such that \eqref{eq:summability} holds.
		Then we have $c(x^k)-z^k\to 0$, $\norm{y^k}\to \infty$, and
		\begin{equation}\label{eq:lower_limit_optimal}
			\liminf_{k\to\infty}\left( f(x^k)+g(z^k) \right)\leq \Phi^\star.
		\end{equation}
		If $\{x^k\}$ is bounded, then there exist an accumulation point $\bar x\in\XX$
		and a subsequence $\{x^k\}_{k\in K}$ such that $x^k\to_K\bar x$,
		$\bar x$ is a minimizer of \eqref{eq:P}, and we have
		$z^k\to_K c(\bar x)$ and $f(x^k)+g(z^k)\to_K\Phi^\star$.
	\end{theorem}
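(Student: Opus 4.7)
The plan is to follow the PPA-based template of \cref{thm:global_convergence} but to harvest information from the \emph{divergence} of $\{y^k\}$ in place of its convergence. Since $\mathcal V$ is assumed proper and lsc, \cref{lem:properties_dualLL} guarantees that $-\dualLL$ is proper, lsc, convex, and bounded below. Moreover, combining \cref{lem:solutions_of_dual_are_multipliers} with the hypothesis that no minimizer of \eqref{eq:P} is stationary rules out the existence of any maximizer of \eqref{eq:PPA:dual_problem}, i.e., $-\dualLL$ has no minimizer. \cref{thm:inexactPPA:saddle} then identifies $\{y^k\}$ with an inexact PPA sequence generated by \cref{alg:PPA} applied to $-\dualLL$ with stepsizes $\gamma_k\coloneqq 1/\mu_k$, and \eqref{eq:summability} matches the summability $\sum\sqrt{\gamma_k\varepsilon_k}<\infty$ required by \cref{lem:PPA}. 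Consequently, \cref{lem:PPA}\,\ref{item:PPA_diverges} yields $\norm{y^k}\to\infty$, while \cref{lem:PPA}\,\ref{item:PPA_summable}, together with the boundedness of $\{\gamma_k^{-1}\}=\{\mu_k\}$, gives $\mu_k(y^{k+1}-y^k)\to 0$. In view of the ALM update $c(x^k)-z^k=\mu_k(y^{k+1}-y^k)$, this already delivers the first two claims of the theorem.

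To obtain \eqref{eq:lower_limit_optimal}, I would start, as in the proof of \cref{thm:global_convergence}, from the subproblem inequality
\begin{equation*}
	f(x^k)+g(z^k)+\innprod{y^k}{c(x^k)-z^k}+\frac{1}{2\mu_k}\norm{c(x^k)-z^k}^2
	=\LL_{\mu_k}(x^k,y^k)\leq\Phi^\star+\varepsilon_k.
\end{equation*}
Substituting $c(x^k)-z^k=\mu_k(y^{k+1}-y^k)$ and applying the polarization identity $\innprod{a}{b-a}+\tfrac12\norm{b-a}^2=\tfrac12(\norm{b}^2-\norm{a}^2)$ brings this into the form
\begin{equation*}
	f(x^k)+g(z^k)+\frac{\mu_k}{2}\left(\norm{y^{k+1}}^2-\norm{y^k}^2\right)\leq\Phi^\star+\varepsilon_k.
\end{equation*}

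The decisive observation is that $\norm{y^k}\to\infty$ precludes $\{\norm{y^k}^2\}$ from being eventually monotonically decreasing---otherwise it would be bounded above, contradicting divergence. Hence there is an infinite set $K\subseteq\N$ with $\norm{y^{k+1}}^2\geq\norm{y^k}^2$ for all $k\in K$, and along $K$ the above inequality collapses to $f(x^k)+g(z^k)\leq\Phi^\star+\varepsilon_k$. Passing to the $\liminf$ while using $\varepsilon_k\downto 0$ (implied by \eqref{eq:summability} and the boundedness of $\{\mu_k\}$) yields \eqref{eq:lower_limit_optimal}. If additionally $\{x^k\}$ is bounded, I would extract a further subsequence $\{x^k\}_{k\in K_0}$ of $\{x^k\}_{k\in K}$ converging to some $\bar x\in\XX$. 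Continuity of $c$ and $c(x^k)-z^k\to 0$ give $z^k\to_{K_0}c(\bar x)$, after which continuity of $f$, lower semicontinuity of $g$, and the subsequential bound $f(x^k)+g(z^k)\leq\Phi^\star+\varepsilon_k$ along $K_0$ yield $\Phi(\bar x)\leq\liminf_{k\to_{K_0}\infty}(f(x^k)+g(z^k))\leq\Phi^\star$. Thus $\bar x$ is a minimizer, and squeezing from above by $\Phi^\star+\varepsilon_k$ and from below by $\Phi(\bar x)=\Phi^\star$ forces $f(x^k)+g(z^k)\to_{K_0}\Phi^\star$. The main conceptual hurdle, relative to the regular setting of \cref{thm:global_convergence}, is that the boundedness of $\{y^k\}$ is no longer available to absorb the term $\innprod{y^k}{c(x^k)-z^k}$; its role is played here by the subsequence argument that makes the signed penalty increment $\tfrac{\mu_k}{2}(\norm{y^{k+1}}^2-\norm{y^k}^2)$ nonnegative along a cofinal subset of indices.
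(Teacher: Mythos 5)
Your proof is correct and follows essentially the same route as the paper: the PPA identification via \cref{thm:inexactPPA:saddle} and \cref{lem:PPA} to get $\norm{y^k}\to\infty$ and $c(x^k)-z^k\to 0$, followed by the subproblem inequality rewritten with the signed increment $\tfrac{\mu_k}{2}(\norm{y^{k+1}}^2-\norm{y^k}^2)$. The only (immaterial) difference is that you extract a cofinal index set where this increment is nonnegative and drop it, whereas the paper argues via $\limsup_{k\to\infty}\mu_k(\norm{y^{k+1}}^2-\norm{y^k}^2)\geq 0$ and superadditivity of the upper limit; both yield \eqref{eq:lower_limit_optimal} and the same subsequential primal conclusions.
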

\end{mybox}
\begin{proof}
	Together with \cref{lem:properties_dualLL,lem:solutions_of_dual_are_multipliers},
	\cref{lem:PPA} yields $\norm{y^k}\to\infty$ and $\mu_k(y^{k+1}-y^k)\to 0$.
	Hence, by construction of \cref{alg:ALMclassic}, $c(x^k)-z^k\to 0$ follows.
	Relation \eqref{eq:bound_auglag_optivalue} gives
	\begin{multline*}
		f(x^k)+g(z^k)+\frac{\mu_k}{2}\left( \norm{y^{k+1}}^2-\norm{y^k}^2 \right)
		=
		f(x^k)+g(z^k)+\frac{1}{2\mu_k}\norm{c(x^k)+\mu_k y^k-z^k}^2-\frac{\mu_k}{2}\norm{y^k}^2
		\\
		=
		\LL_{\mu_k}(x^k,y^k)
		\leq
		\Phi^\star+\varepsilon_k.
	\end{multline*}
	From $\norm{y^k}\to\infty$
	we infer the relation $\limsup_{k\to\infty}(\norm{y^{k+1}}^2-\norm{y^k}^2)\geq 0$,
	and, thus, $\limsup_{k\to\infty}\mu_k(\norm{y^{k+1}}^2-\norm{y^k}^2)\geq 0$.
	Hence, taking the upper limit in the above estimate yields \eqref{eq:lower_limit_optimal},
	together with $\varepsilon_k\downto 0$ from \eqref{eq:summability}:
	\begin{align*}
		\Phi^\star
		\geq{}&
		\limsup_{k\to\infty}\left(
		f(x^k)+g(z^k)+\frac{\mu_k}{2}\left( \norm{y^{k+1}}^2-\norm{y^k}^2 \right)
		\right)
		\\
		\geq{}&
		\liminf_{k\to\infty}\left( f(x^k)+g(z^k) \right)
		+
		\limsup_{k\to\infty}\frac{\mu_k}{2}\left( \norm{y^{k+1}}^2-\norm{y^k}^2 \right)
		\geq
		\liminf_{k\to\infty}\left( f(x^k)+g(z^k) \right)
		.
	\end{align*}
	
	For the remainder of the proof,
	we assume that $\{x^k\}$ is bounded.
	Let us pick an infinite set $K'\subseteq\N$ such that 
	\[
		f(x^k) + g(z^k) \to_{K'} \liminf_{k\to\infty}\left( f(x^k)+g(z^k) \right).
	\]
	Since the subsequence $\{x^k\}_{k\in K'}$ must be bounded,
	we find an infinite set $K\subseteq K'$ and $\bar x\in\XX$ such that $x^k\to_{K}\bar x$.
	Hence, $c(x^k)-z^k\to 0$ and the continuity of $c$ guarantee $z^k\to_K c(\bar x)$.
	Now, lower semicontinuity of $g$ and \eqref{eq:lower_limit_optimal} yield
	\[
		\Phi(\bar x)
		\leq
		\lim_{k\to_K\infty}\left( f(x^k)+g(z^k) \right)
		=
		\liminf_{k\to\infty}\left( f(x^k)+g(z^k) \right)
		\leq
		\Phi^\star.
	\]
	Particularly, this implies $\bar x\in c^{-1}(\dom g)$,
	and optimality of $\bar x$ for \eqref{eq:P} is obtained.
\end{proof}

Following the proof of \cref{thm:global_convergence_irregular},
it is possible to obtain the stronger estimate \eqref{eq:upper_limit_optimal} whenever $\{\norm{y^k}\}$
is monotonically nondecreasing or, at least, if
\[
	\liminf_{k\to\infty}\mu_k\left(\norm{y^{k+1}}^2-\norm{y^k}^2\right)\geq 0
\]
is true (which is implied by the monotonicity of $\{\norm{y^k}\}$).
This question addresses the analysis of (inexact) PPA in situations where the produced sequence diverges,
and might be of independent interest.
Observe that validity of \eqref{eq:upper_limit_optimal} would allow 
to obtain the qualitative results of \cref{thm:global_convergence}
even in the irregular setting considered in \cref{thm:global_convergence_irregular}.

\begin{mybox}
	\begin{remark}\label{rem:primal_convergence_irregular_case}
		We note that the generality of \cref{thm:global_convergence_irregular}
		is not covered by the findings in \cite{andrews2025augmented}
		where $g$ is merely an indicator function associated
		to a given closed, convex set.
		Furthermore, in \cite{andrews2025augmented},
		the authors merely consider situations where $\mu_k\downarrow 0$
		and require further delicate control on $\{\mu_k\}$ to obtain their qualitative results.
	\end{remark}
\end{mybox}

\subsection{Dual updates as proximal-gradient ascent}

It is known that the dual update in ALM corresponds to a (projected) gradient ascent step \cite[\S 2.1]{boyd2011distributed}.
This observation is extended here to the setting of \eqref{eq:P},
for which the dual update at \cref{step:ALMclassic:y} of \cref{alg:ALMclassic} 
can be interpreted as a proximal-gradient ascent step with stepsize $\mu_k^{-1}$, as we now uncover.

Specifically, noticing that $-\innprod{\cdot}{c(x)}$ and $g^\conj$ are the smooth and nonsmooth terms of $-\LL(x,\cdot)$, respectively, 
the dual update $y^{k+1}$ turns out to be a proximal-gradient step to maximize the Lagrangian $\LL(x^k,\cdot)$ 
starting from the current estimate $y^k$, see \eqref{eq:dual_update_as_prox_grad_ascent}.
Alternatively, the dual update can also be recovered as the gradient step (again with stepsize $\mu_k^{-1}$) 
to maximize the augmented Lagrangian $\LL_{\mu_k}(x^k,\cdot)$ from $y^k$, see \eqref{eq:dual_update_as_grad_ascent}.
These observations are precisely stated in the following lemma.

\begin{mybox}
	\begin{lemma}\label{thm:dual_update_as_prox_grad}
		Let $\bar{x}\in\XX$, $\hat{y} \in \YY$, and $\mu>0$ be fixed.
		Suppose that \cref{ass:convexP} is satisfied.
		Set $\bar{z} \coloneqq \prox_{\mu g}(c(\bar{x}) + \mu \hat{y})$
		and $\bar{y} \coloneqq \hat{y} + \mu^{-1}[c(\bar{x}) - \bar{z}]$.
		Then the following identities hold:
		\begin{subequations}
		\begin{align}\label{eq:dual_update_as_prox_grad_ascent}
			\bar{y}
			={}&
			\prox_{\nicefrac{1}{\mu} g^\conj}\left( \hat{y} + \frac{1}{\mu} c(\bar{x}) \right)
			, \\
			\bar{y}
			={}&
			\hat{y} + \frac{1}{\mu} \nabla_y \LL_\mu(\bar{x},\hat{y})
			.\label{eq:dual_update_as_grad_ascent}
		\end{align}
	\end{subequations}
	\end{lemma}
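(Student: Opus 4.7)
The plan is to derive both identities by direct computation, relying on the Moreau decomposition and on the standard gradient formula for the Moreau envelope.

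For the first identity \eqref{eq:dual_update_as_prox_grad_ascent}, I would invoke the \emph{Moreau identity}: for any proper, lsc, convex $\func{g}{\YY}{\Rinf}$, any $v\in\YY$, and any $\gamma>0$,
\[
	v = \prox_{\gamma g}(v) + \gamma \prox_{\gamma^{-1} g^\conj}(\gamma^{-1}v),
\]
see, e.g., \cite[Thm~14.3]{BauschkeCombettes2011}. Applying this with $\gamma\coloneqq\mu$ and $v\coloneqq c(\bar x)+\mu\hat y$, and using the definition of $\bar z$, one obtains
\[
	c(\bar x)+\mu\hat y = \bar z + \mu\,\prox_{\mu^{-1}g^\conj}\!\left(\hat y + \mu^{-1}c(\bar x)\right).
\]
Rearranging and dividing by $\mu$ gives $\prox_{\mu^{-1}g^\conj}(\hat y + \mu^{-1}c(\bar x)) = \hat y + \mu^{-1}(c(\bar x)-\bar z) = \bar y$, which is exactly \eqref{eq:dual_update_as_prox_grad_ascent}.

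For the second identity \eqref{eq:dual_update_as_grad_ascent}, the computation was essentially carried out already in the proof of \cref{thm:inexactPPA:saddle}: from the representation
\[
	\LL_\mu(x,y) = f(x) + g^\mu(c(x)+\mu y) - \frac{\mu}{2}\norm{y}^2
\]
in \eqref{eq:ALMz:AugLagrangian} and the fact that $g^\mu$ is continuously differentiable with $\nabla g^\mu(v)=\mu^{-1}(v-\prox_{\mu g}(v))$ (see \cite[Prop.\ 12.29]{BauschkeCombettes2011}), differentiating in $y$ at $(\bar x,\hat y)$ and substituting the definition of $\bar z$ yields
\[
	\nabla_y\LL_\mu(\bar x,\hat y) = \mu\nabla g^\mu(c(\bar x)+\mu\hat y) - \mu\hat y = (c(\bar x)+\mu\hat y - \bar z) - \mu\hat y = c(\bar x) - \bar z.
\]
Dividing by $\mu$ and adding $\hat y$ recovers $\bar y = \hat y + \mu^{-1}\nabla_y\LL_\mu(\bar x,\hat y)$.

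Neither direction contains a real obstacle: both identities unfold from Moreau's duality between $\prox_{\mu g}$ and $\prox_{\mu^{-1}g^\conj}$, together with the gradient formula for the Moreau envelope. The only point to be mindful of is that all three quantities $\bar z$, $\nabla g^\mu(c(\bar x)+\mu\hat y)$, and $\prox_{\mu^{-1}g^\conj}(\hat y+\mu^{-1}c(\bar x))$ encode the same dual update, and the computation amounts to making this triple-equivalence explicit. Since the second identity has effectively been established within the proof of \cref{thm:inexactPPA:saddle} (cf.\ \eqref{eq:der_Moreau_envelope}), one may either invoke that computation verbatim or redo it in a single line.
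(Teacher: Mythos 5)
Your proposal is correct and uses exactly the same two ingredients as the paper's proof: the gradient formula for the Moreau envelope (already recorded in \eqref{eq:der_Moreau_envelope}) for \eqref{eq:dual_update_as_grad_ascent}, and the extended Moreau decomposition for \eqref{eq:dual_update_as_prox_grad_ascent}. The only cosmetic difference is that you derive \eqref{eq:dual_update_as_prox_grad_ascent} directly from the decomposition applied to $c(\bar x)+\mu\hat y$, whereas the paper routes it through \eqref{eq:dual_update_as_grad_ascent}; this is an immaterial reorganization.
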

\end{mybox}
\begin{proof}
	The gradient ascent update in \eqref{eq:dual_update_as_grad_ascent} readily follows from the definitions of $\bar{z}$ and $\bar{y}$
	as well as \eqref{eq:der_Moreau_envelope}.
	Using \eqref{eq:dual_update_as_grad_ascent}, the proximal-gradient ascent update in \eqref{eq:dual_update_as_prox_grad_ascent} is an immediate consequence 
	of the (extended) Moreau decomposition, see \cite[Thm~6.45]{Beck2017}.
\end{proof}

\section{Safeguarded ALM through the PPA lens}
\label{sec:safeguarded_ALM}

In numerical optimization, so-called safeguarded ALMs are popular as they possess better
global convergence properties than the classical method, see, e.g., \cite{kanzow2017example}.
Particularly, the safeguarding mechanism allows us to prove (primal) convergence even in the nonconvex, nonregular, and/or nonsmooth case,
see, e.g., \cite{birgin2014practical,demarchi2024implicit,demarchi2023constrained,DeMarchiMehlitz2024,KanzowKraemerMehlitzWachsmuthWerner2025}.
Here, ``convergence'' is understood in the sense of asymptotic optimality or stationarity, even if multipliers do not exist.

Motivated by \cref{thm:global_convergence}, we now aim to discuss the behavior of a prototypical 
safeguarded ALM in the fully convex composite setting induced by \cref{ass:convexP}.
The procedure of interest, taken from \cite{DeMarchiMehlitz2024}, is stated in \cref{alg:ALMsafeguarded}.
The safeguarding technique employs a sequence $\{\hat y^k\}$ of multiplier estimates which
remains bounded by construction.
Analogously to \eqref{eq:subproblem_solution}, subproblems at \cref{step:ALMsafeguarded:subproblem} are solved according to
\begin{equation}\label{eq:subproblem_solution:safeguarded}
	\LL_{\mu_k}(x^k,\hat{y}^k)
	\leq 
	\inf\LL_{\mu_k}(\cdot,\hat{y}^k)+\varepsilon_k.
\end{equation}
Let us mention that, similar to \cref{alg:ALMclassic}, 
\cref{alg:ALMsafeguarded} is reasonable in 
\cref{set:constant_penalty_parameter,set:strictly_decreasing_penalty_parameter,set:algencan_penalty_parameter}.
However, convergence with constant penalty parameter as in \cref{set:constant_penalty_parameter} cannot be expected in general.
Similarly, the safeguarding cannot be blindly applied
but it should be accompanied by suitable updates of the penalty parameter.
Both issues are illustrated in \cref{ex:safeguarded_ALM_with_constant_penalty_parameter} below. 

\begin{algorithm2e}[htb]
	\DontPrintSemicolon
	\KwData{$y^0\in \YY$; $\mu_0>0$; nonempty, convex, compact set $\safeYY\subseteq\YY$; $\beta\in(0,1)$\;}
	\For{$k = 0,1,2\ldots$}{
		Set $\hat{y}^k\coloneqq \proj_{\safeYY}(y^k)$.
		\label{step:ALMsafeguarded:ysafe}%
		\;
		Select $\varepsilon_k\geq 0$ and compute $x^k\in \XX$ such that \eqref{eq:subproblem_solution:safeguarded} holds.
		\label{step:ALMsafeguarded:subproblem}%
		\;
		Set 
		\label{step:ALMsafeguarded:z}%
		$z^k\coloneqq \prox_{\mu_k g}(c(x^k) + \mu_k \hat y^k)$.%
		\;
		Set 
		\label{step:ALMsafeguarded:y}%
		$y^{k+1} \coloneqq \hat y^k + \mu_k^{-1} [c(x^k) - z^k]$.%
		\;
		Set $\mu_{k+1} \coloneqq \beta_k \mu_k$ for some $\beta_k\in(0,\beta]\cup\{1\}$.
	}
	\caption{Safeguarded ALM for \eqref{eq:P}.}
	\label{alg:ALMsafeguarded}
\end{algorithm2e}

It follows from \cref{thm:inexactPPA:saddle} that the safeguarded ALM, sketched in \cref{alg:ALMsafeguarded}, 
can be viewed as a generalized PPA type method, applied to the dual problem \eqref{eq:PPA:dual_problem}, taking
\begin{equation}\label{eq:generalized_PPA}
	y^{k+1} \in \prox_{- \nicefrac{1}{\mu_k} \dualLL}^{\varepsilon_k}(\hat{y}^k)
\end{equation}
for all $k\in\N_0$.
In \cite{kanzow2017generalized},
it has been observed that this scheme differs from the original PPA in that it takes $\hat{y}^k$ instead of $y^{k}$, 
and such gap is due to the safeguarding---necessary for global primal convergence in the nonconvex case, see \cite{kanzow2017example} again.
Furthermore, it has been shown in \cite{kanzow2017generalized} that each accumulation point of a sequence $\{y^k\}$
computed via \eqref{eq:generalized_PPA} for bounded $\{\hat y^k\}$, $\varepsilon_k\coloneqq 0$ for each $k\in\N_0$,
and $\mu_k\downtoneq 0$ is a maximizer of $\dualLL$.
Here, we will deepen the insights into this generalized PPA significantly
whenever $\hat y^k$ is obtained from $y^k$ by projection onto a 
reasonably chosen safeguarding set $\safeYY$,
which is the standard way to implement safeguarded ALMs,
and exploit these findings for the convergence analysis of \cref{alg:ALMsafeguarded}.

\begin{mybox}
	\begin{remark}\label{rem:safeguarding_set}
		Note that the goal behind (safeguarded) augmented Lagrangian methods is not only to find
		minimizers of \eqref{eq:P} as accumulation points of the produced primal sequence,
		but to construct associated multipliers (in case of existence)
		as accumulation points of the produced dual sequence.
		Given $\bar z\in\dom g$, one can easily check that $\partial g(\bar z)\subseteq (\hzn g)^\circ$ is valid, i.e.,
		the images of the set-valued mapping $\ffunc{\partial g}{\YY}{\YY}$ are contained in
		the polar cone of $\hzn g$.
		Insofar, observing that all possible multipliers of \eqref{eq:P},
		which are sufficiently small in norm,
		should belong to $\safeYY$,
		it makes sense to choose $\safeYY\coloneqq\{y\in(\hzn g)^\circ\,|\,\norm{y}\leq C\}$ 
		for some large constant $C>0$.
		Whenever $g$ is the indicator function of a closed, convex cone,
		this is, indeed, the preferred choice for $\safeYY$.
	\end{remark}
\end{mybox}

\subsection{Dual updates as inexact backward-backward splitting}

Note that the dual iterates $\{y^k\}$ in \cref{alg:ALMsafeguarded} obey the recurrence
\begin{equation}\label{eq:alternating_operators}
	y^{k+1}
	\in
	\prox_{- \nicefrac{1}{\mu_k} \dualLL}^{\varepsilon_k} \left( \proj_{\safeYY}(y^k) \right)
\end{equation}
for all $k\in\N_0$, see \cref{thm:inexactPPA:saddle}.
Here, we do not discuss further the associated, though less interesting, recurrence 
\begin{equation*}
	\hat{y}^{k+1}
	\in
	\proj_{\safeYY} \left( \prox_{- \nicefrac{1}{\mu_k} \dualLL}^{\varepsilon_k}(\hat{y}^k) \right)
\end{equation*}
on the surrogate sequence $\{\hat y^k\}$.

In order to get a feeling of the behavior of the iteration \eqref{eq:alternating_operators},
let us assume, for simplicity, that $\varepsilon_k=0$ for all $k\in\N_0$ and
that \cref{set:constant_penalty_parameter} is chosen.
Let $\mu\coloneqq\mu_0$ denote the associated constant penalty parameter.
Noting that the operators $\proj_{\safeYY}$ and $\prox_{-\nicefrac1\mu\dualLL}$ are firmly nonexpansive,
in this particular case,
the iterative procedure from \eqref{eq:alternating_operators}
is a special instance of the method discussed in \cite{ArizaLopezNicolae2015,banert2014backward},
see \cite{AckerPrestel1980,BauschkeCombettesReich2005} as well.
Following \cite[Thm~4.2]{ArizaLopezNicolae2015} 
while noting that the image of $\proj_{\safeYY}$ is nonempty and compact,
we obtain that \eqref{eq:alternating_operators} yields a sequence converging to a fixed point of
$\prox_{- \nicefrac{1}{\mu} \dualLL}\circ\proj_{\safeYY}$ provided there is one.
Let us note that the fixed-point condition 
$\bar y = \prox_{- \nicefrac{1}{\mu} \dualLL}(\proj_{\safeYY}(\bar y))$ 
is equivalent to the inclusion
\begin{equation}
	\mu\left(\proj_{\safeYY}(\bar y)-\bar y\right)\in\partial(-\dualLL)(\bar y)
\end{equation}
by definition of the proximal operator and the convexity of $-\dualLL$.
Using \cite[Cor.\ 12.30]{BauschkeCombettes2011}, 
this is the same as postulating that $\bar y$ is a stationary
point of the convex optimization problem
\begin{equation}\label{eq:penalized_dual}\tag{D$_\mu$}
	\minimize_{y}
	{}\quad{}
	- \dualLL(y) + \frac{\mu}{2} \dist_{\safeYY}^2(y),
\end{equation}
i.e., one of its minimizers.
Note that \eqref{eq:penalized_dual} can be interpreted as a penalized version
of the dual problem \eqref{eq:PPA:dual_problem} complemented by the constraint $y\in \safeYY$.
As \eqref{eq:P} is assumed to be feasible,
the objective function of \eqref{eq:PPA:dual_problem} is lower bounded due to \cref{lem:duality},
and boundedness of $\safeYY$ additionally yields that the objective function of \eqref{eq:penalized_dual} is coercive.
Hence, this problem possesses a minimizer
whenever $-\dualLL$ is proper and lsc
which can be guaranteed via mild assumptions on the problem data,
see \cref{lem:value_function,lem:properties_dualLL},
so that \eqref{eq:alternating_operators} indeed produces a convergent sequence
in this special situation.

In a remark, Andreani et al.\ \cite[\S 5]{andreani2008augmented} ask the following question:
``\emph{What happens if the box chosen for the safeguarded multiplier estimates is too small?}''
Among other things, our streamlined convergence analysis in \cref{sec:convergence_rigid_safeguard} 
rediscovers their answer from the PPA perspective:
global convergence properties remain, but penalty parameters should vanish if the safeguarding
set $\safeYY$ does not comprise a multiplier from $\multYY$ provided there is one.

\subsection{Convergence analysis}\label{sec:convergence_rigid_safeguard}

We now present a detailed convergence analysis of the safeguarded ALM from \cref{alg:ALMsafeguarded}.
To start, let us comment on general properties of the dual sequence $\{(y^k,\hat y^k)\}$.

\begin{mybox}
	\begin{lemma}\label{lem:dual_sequences_safeguarded}
		Suppose that \cref{ass:convexP} holds. 
		Furthermore, let the value function $\mathcal V$ from \eqref{eq:value_function}
		be proper and lsc.
		Let $\{(x^k,z^k,y^k,\hat y^k)\}$ be a sequence generated by 
		\cref{alg:ALMsafeguarded}
		such that \eqref{eq:summability} holds.
		Then 
		\begin{equation}\label{eq:square_summability}
			\sum_{k=0}^\infty\mu_k\norm{y^{k+1}-y^k}^2 < \infty,
			\qquad
			\sum_{k=0}^\infty\mu_k\norm{\hat y^{k+1}-\hat y^k}^2 < \infty
		\end{equation}
		and, particularly, the convergences
		$\mu_k(y^{k+1}-y^k)\to 0$ and $\mu_k(\hat y^{k+1}-\hat y^k)\to 0$ hold.
		Moreover, each accumulation point of $\{y^k\}$ is 
		a solution of the convex optimization problem \eqref{eq:penalized_dual},
		where $\mu\geq 0$ is such that $\mu_k\downto \mu$.
	\end{lemma}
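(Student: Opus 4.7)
The plan is to cast the iteration \eqref{eq:generalized_PPA} as an inexact backward--backward splitting, applied with stepsizes $\gamma_k \coloneqq 1/\mu_k$ to the function $\varphi \coloneqq -\dualLL$, which \cref{lem:properties_dualLL} guarantees is proper, lsc, convex, and bounded below by $-\Phi^\star$. Via \cref{thm:inexactPPA:saddle}, the updates read $y^{k+1} \in \prox_{\gamma_k\varphi}^{\varepsilon_k}(\hat y^k)$; denoting the exact proximal step by $y^{k+1}_{\rm e} \coloneqq \prox_{\gamma_k\varphi}(\hat y^k)$, one has $\|y^{k+1} - y^{k+1}_{\rm e}\| \leq \sqrt{2\gamma_k\varepsilon_k}$. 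The sequence $\{\gamma_k\}$ is nondecreasing and bounded below by $1/\mu_0 > 0$, and $\sum_k \sqrt{\gamma_k\varepsilon_k} < \infty$ forces both $\gamma_k\varepsilon_k \to 0$ and $\sum_k \varepsilon_k < \infty$.

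For the summability \eqref{eq:square_summability}, I would mirror the proof of \cref{lem:PPA}. The three-point inequality \eqref{eq:three_point_inequality} applied to the proximal step with test point $y^k_{\rm e}$ yields
\[
\frac{1}{2\gamma_k}\|y^k_{\rm e} - y^{k+1}_{\rm e}\|^2 + \frac{1}{2\gamma_k}\|\hat y^k - y^{k+1}_{\rm e}\|^2 \leq \varphi(y^k_{\rm e}) - \varphi(y^{k+1}_{\rm e}) + \frac{1}{2\gamma_k}\|\hat y^k - y^k_{\rm e}\|^2,
\]
but unlike in classical PPA, the residual $\|\hat y^k - y^k_{\rm e}\|^2$ does not vanish individually since the projection reshapes the proximal input. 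Applying the four-point inequality \eqref{eq:four_point_inequality} at the quadruple $(\hat y^k, y^k_{\rm e}, \hat y^{k-1}, y^k)$ together with the Pythagorean property $\|\hat y^{k-1} - y^k\|^2 \geq \|\hat y^{k-1} - \hat y^k\|^2 + \|\hat y^k - y^k\|^2$ (valid since $\hat y^{k-1}\in\safeYY$) produces, after cancellations, the clean bound
\[
\|\hat y^k - y^k_{\rm e}\|^2 \leq \|\hat y^{k-1} - y^k_{\rm e}\|^2 + \|y^k - y^k_{\rm e}\|^2.
\]
Introducing the telescoping quantity $T_k \coloneqq \tfrac{1}{2\gamma_k}\|\hat y^k - y^{k+1}_{\rm e}\|^2$ and exploiting $\gamma_{k-1}\leq\gamma_k$ together with $\|y^k - y^k_{\rm e}\|^2 \leq 2\gamma_{k-1}\varepsilon_{k-1}$, the two estimates combine into
\[
\frac{1}{2\gamma_k}\|y^k_{\rm e} - y^{k+1}_{\rm e}\|^2 + (T_k - T_{k-1}) \leq \varphi(y^k_{\rm e}) - \varphi(y^{k+1}_{\rm e}) + \varepsilon_{k-1}.
\]
Telescoping, using $\varphi \geq -\Phi^\star$ and $\sum_k \varepsilon_k < \infty$, gives $\sum_k \gamma_k^{-1}\|y^{k+1}_{\rm e} - y^k_{\rm e}\|^2 < \infty$. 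Triangle inequality and the inexactness bound transfer this to $\sum_k \mu_k\|y^{k+1} - y^k\|^2 < \infty$; nonexpansiveness of $\proj_\safeYY$ yields the analogous bound for $\{\hat y^k\}$. Since $\sqrt{\mu_k}\leq\sqrt{\mu_0}$, $\mu_k\|y^{k+1}-y^k\|^2\to 0$ implies $\mu_k(y^{k+1}-y^k)\to 0$, and similarly for $\{\hat y^k\}$.

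For the characterization of accumulation points, fix $\tilde y$ with $y^k \to_K \tilde y$. Because $\gamma_{k-1}\varepsilon_{k-1}\to 0$, the inexactness vanishes and $y^k_{\rm e} \to_K \tilde y$. The proximal step at iteration $k-1$ encodes the inclusion $\mu_{k-1}(\hat y^{k-1} - y^k_{\rm e})\in\partial\varphi(y^k_{\rm e})$; since $\{\hat y^{k-1}\}\subseteq\safeYY$ is bounded, I pass to a further subsequence on which $\hat y^{k-1} \to_K \tilde{\hat y}$, and closedness of the graph of $\partial\varphi$ (\cite[Thm~24.4]{Rockafellar1970}) delivers $\mu(\tilde{\hat y} - \tilde y)\in\partial\varphi(\tilde y)$. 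When $\mu>0$, $\mu_k\|y^{k+1}-y^k\|^2\to 0$ gives $\|y^k-y^{k-1}\|\to 0$, hence $y^{k-1}\to_K\tilde y$, and continuity of the projection pins down $\tilde{\hat y} = \proj_\safeYY(\tilde y)$; when $\mu=0$, the left-hand side simply vanishes, giving $0\in\partial\varphi(\tilde y)$, which matches \eqref{eq:penalized_dual} in that degenerate case. Since $\nabla\tfrac12\dist_\safeYY^2(y) = y - \proj_\safeYY(y)$, the resulting inclusion $\mu(\proj_\safeYY(\tilde y) - \tilde y)\in\partial\varphi(\tilde y)$ is the first-order optimality condition for the convex program \eqref{eq:penalized_dual}.

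The hard part is the combined descent-and-telescope argument of the second paragraph: because the projection inserts the perturbation $\hat y^k - y^k$ between successive proximal inputs, the vanilla PPA telescoping breaks down, and one must engineer the correct auxiliary quantity $T_k$ from the four-point inequality and the firm nonexpansiveness of $\proj_\safeYY$ (as in \cite{banert2014backward}) to restore a summable descent structure despite the varying stepsize and the inexactness.
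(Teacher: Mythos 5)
Your proposal is correct and follows essentially the same route as the paper's proof: it defines the exact proximal step, combines the three-point inequality for the prox with the Pythagorean property of the projection (which is the three-point inequality for $\indicator_{\safeYY}$) and the four-point inequality, and telescopes the quantity $\varphi(y^{k+1}_{\rm e})+\tfrac{\mu_k}{2}\norm{\hat y^k-y^{k+1}_{\rm e}}^2$ — precisely the paper's $\Psi_k(y^{k+1}_{\rm e},\hat y^k)$ — before transferring summability to the inexact iterates and closing the accumulation-point argument via graph-closedness of $\partial(-\dualLL)$. The only cosmetic differences are that you track $T_k$ separately rather than through $\Psi_k$, and that you split the cases $\mu>0$ and $\mu=0$ where the paper uniformly identifies $\mu_k\hat y^k\to_K\mu\proj_{\safeYY}(\tilde y)$ from $\mu_k(\hat y^{k+1}-\hat y^k)\to 0$.
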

\end{mybox}
\begin{proof}
	The proof is closely related to the one of \cref{lem:PPA},
	so we present it briefly.
	
	Again, we define the exact step
	\begin{equation}\label{eq:definition_of_exact_prox}
	y^{k+1}_{\rm e}
	\coloneqq
	\prox_{-\nicefrac1{\mu_k}\dualLL}(\hat y^k)
	\end{equation}
	for each $k\in\N_0$ and obtain the estimate
	\begin{equation}\label{eq:distance_to_exact_prox}
		\norm{y^{k+1}-y^{k+1}_{\rm e}}
		\leq
		\sqrt{2\varepsilon_k/\mu_k}
	\end{equation}
	from \cref{thm:inexactPPA:saddle}, showing $y^{k+1}\in\prox^{\varepsilon_k}_{-\nicefrac{1}{\mu_k}\dualLL}(\hat y^k)$, and \eqref{eq:inexact_prox_mapping}.
	We will make use of the function $\func{\Psi_k}{\YY\times\YY}{\Rinf}$ given by
	\[
		\Psi_k(y,\hat y)
		\coloneqq
		-\dualLL(y)+\indicator_{\safeYY}(\hat y)+\frac{\mu_k}{2}\norm{y-\hat y}^2
	\]
	for each $k\in\N_0$.
	Using inequality \eqref{eq:three_point_inequality}, we find
	\begin{align*}
		-\dualLL(y^{k+1}_{\rm e})
		\leq{}&
		-\dualLL(y^k_{\rm e})
		+
		\frac{\mu_k}{2}\left(
			\norm{\hat y^k-y^k_{\rm e}}^2-\norm{\hat y^k - y^{k+1}_{\rm e}}^2 - \norm{y^{k+1}_{\rm e} - y^k_{\rm e}}^2
		\right),
		\\
		\indicator_{\safeYY}(\hat y^k)
		\leq{}&
		\indicator_{\safeYY}(\hat y^{k-1})
		+
		\frac{\mu_k}{2}\left(
			\norm{y^k-\hat y^{k-1}}^2 - \norm{y^k-\hat y^k}^2-\norm{\hat y^k-\hat y^{k-1}}^2
		\right).
	\end{align*}
	Since $\{\mu_k\}$ is nonincreasing,
	adding up these estimates and using \eqref{eq:four_point_inequality} gives
	\begin{align*}
		\Psi_k(y^{k+1}_{\rm e},\hat y^k)
		\leq{}&
		\Psi_k(y^k_{\rm e},\hat y^{k-1})
		+
		\frac{\mu_k}{2}\left(
			\norm{\hat y^k-y^k_{\rm e}}^2 
			+ 
			\norm{y^k-\hat y^{k-1}}^2 
			- 
			\norm{y^k_{\rm e}-\hat y^{k-1}}^2
			\right.
		\\&\qquad
			\left.-
			\norm{y^k-\hat y^k}^2
			-
			\norm{y^{k+1}_{\rm e}-y^k_{\rm e}}^2
			-
			\norm{\hat y^k-\hat y^{k-1}}^2
		\right)
		\\
		\leq{}&
		\Psi_k(y^k_{\rm e},\hat y^{k-1})
		+
		\frac{\mu_k}{2}\left(
			\norm{y^k-y^k_{\rm e}}^2 - \norm{y^{k+1}_{\rm e}-y^k_{\rm e}}^2
		\right)
		\\
		\leq{}&
		\Psi_{k-1}(y^k_{\rm e},\hat y^{k-1})
		+
		\frac{\mu_k}{2}\left(
			\norm{y^k - y^k_{\rm e}}^2 - \norm{y^{k+1}_{\rm e} - y^k_{\rm e}}^2
		\right).
	\end{align*}
	For some $N\in\N$,
	let us sum up the above inequalities for $k=1,\ldots,N$
	in order to obtain
	\begin{equation*}
		\Psi_{N}(y^{N+1}_{\rm e},\hat y^{N})
		\leq
		\Psi_0(y^1_{\rm e},\hat y^0)
		+
		\sum_{k=1}^{N}
			\frac{\mu_k}{2}\left(
				\norm{y^k - y^k_{\rm e}}^2 - \norm{y^{k+1}_{\rm e} - y^k_{\rm e}}^2
			\right).
	\end{equation*}
	Hence, with the aid of \eqref{eq:distance_to_exact_prox},
	the definition of $\Psi_k$, 
	and \cref{lem:properties_dualLL},
	similar to the arguments  after \eqref{eq:inexact_prox_bound_telescope}, 
	we find
	\begin{equation*}
		\sum_{k=1}^{N}\mu_k\norm{y^{k+1}-y^k}^2
		\leq
		6\sum_{k=0}^\infty \varepsilon_k 
		+
		12\mu_0\sum_{k=0}^\infty \frac{\varepsilon_k}{\mu_k}
		+ 
		6\left( \Psi_0(y^1_{\rm e},\hat y^0)+\Phi^\star \right).
	\end{equation*}
	Taking the limit $N\to\infty$ 
	while having \eqref{eq:summability} in mind, 
	which also yields $\sum_{k=0}^\infty\varepsilon_k<\infty$ and $\sum_{k=0}^\infty \varepsilon_k/\mu_k < \infty$,
	the first inequality in \eqref{eq:square_summability} is obtained,
	and the second one trivially follows as the projection is nonexpansive.
	Then the convergences $\sqrt{\mu_k}(y^{k+1}-y^k)\to 0$ 
	and $\sqrt{\mu_k}(\hat y^{k+1}-\hat y^k)\to 0$
	follow, and boundedness of $\{\mu_k\}$ yields
	$\mu_k(y^{k+1}-y^k)\to 0$ and $\mu_k(\hat y^{k+1}-\hat y^k)\to 0$.
	
	Let us focus now on the second part and
	notice that $\mu$ is well defined (since $\{\mu_k\}$ is bounded and nonincreasing, hence, convergent).
	For each $k\in\N_0$, 
	the definition of the proximal mapping yields
	\begin{equation}\label{eq:approximate_optimality_condition}
		\mu_k(\hat y^{k} - y^{k+1}_{\rm e})
		\in
		\partial(-\dualLL)(y^{k+1}_{\rm e}).
	\end{equation}
	Let $\{y^{k+1}\}_{k\in K}$ be a convergent subsequence with limit $\tilde y\in\YY$. 
	Then $y^{k+1}\to_K\tilde y$, \eqref{eq:distance_to_exact_prox}, and $\varepsilon_k/\mu_k\downto 0$
	yield $y^{k+1}_{\rm e}\to_K\tilde y$.
	Furthermore, $y^{k+1}\to_K\tilde y$
	also implies $\hat y^{k+1}\to_K\proj_{\safeYY}(\tilde y)$ by continuity of the projection
	(which follows from convexity of $\safeYY$).
	Hence, we find $\mu_k\hat y^{k+1}\to_K \mu\proj_{\safeYY}(\tilde y)$ and, thus,
	$\mu_k\hat y^k\to_K\mu\proj_{\safeYY}(\tilde y)$ from the convergence
	$\mu_k(\hat y^{k+1}-\hat y^k)\to 0$.
	Taking the limit $k\to_K\infty$ in \eqref{eq:approximate_optimality_condition}, 
	we end up with
	$\mu(\proj_{\safeYY}(\tilde y)-\tilde y)\in\partial(-\dualLL)(\tilde y)$,
	see \cite[Thm~24.4]{Rockafellar1970}.
	This inclusion means, using
	\cite[Cor.\ 12.30]{BauschkeCombettes2011} again,
	that $\tilde y$ is a stationary point of the convex problem \eqref{eq:penalized_dual},
	i.e., one of its minimizers.
\end{proof}

Next, we distinguish between the cases 
where \eqref{eq:P} possesses a stationary minimizer or not.
Let us start to investigate the regular situation.

\begin{mybox}
\begin{theorem}\label{thm:global_convergence_safeguarded_alm_dual}
		Suppose that \cref{ass:convexP} holds and \eqref{eq:P} possesses a stationary minimizer.
		Let $\{(x^k,z^k,y^k,\hat y^k)\}$ be a sequence generated by 
		\cref{alg:ALMsafeguarded}
		such that \eqref{eq:summability} holds.
		Furthermore, let $\mu\geq 0$ be such that $\mu_k\downto \mu$.
		Then $\{y^k\}$ is bounded
		and the following hold:
		\begin{enumerate}[label=(\roman*)]
			\item\label{item:penalty_parameter_to_zero} 
				If $\mu=0$, then each accumulation point of $\{y^k\}$
				belongs to $\multYY$.
			\item\label{item:penalty_parameter_uniformly_positive} 
				If $\mu>0$,
				then $\{y^k\}$ converges to a solution of \eqref{eq:penalized_dual}.
			\item\label{item:multiplier_in_box}
				If $\multYY\cap \safeYY\neq\emptyset$,
				then $\{y^k\}$ converges to an element in this set.
		\end{enumerate}
\end{theorem}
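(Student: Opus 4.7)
The plan is to mirror the PPA-style argument from the proof of \cref{lem:PPA}, now built on \cref{lem:dual_sequences_safeguarded}, \cref{thm:inexactPPA:saddle}, and \cref{lem:Banerts_lemma}. Throughout, denote the exact proximal step by $y^{k+1}_{\rm e} \coloneqq \prox_{-\nicefrac{1}{\mu_k}\dualLL}(\hat y^k)$; \cref{thm:inexactPPA:saddle} yields $\|y^{k+1}-y^{k+1}_{\rm e}\| \le \sqrt{2\varepsilon_k/\mu_k}$, with summable right-hand side by \eqref{eq:summability}. Boundedness of $\{y^k\}$ follows from picking any $\bar y \in \multYY$ (which is nonempty by hypothesis): since $\bar y$ minimizes $-\dualLL$, it is a fixed point of every $\prox_{-\nicefrac{1}{\mu_k}\dualLL}$, so nonexpansiveness gives $\|y^{k+1}_{\rm e}-\bar y\| \le \|\hat y^k-\bar y\|$, and together with boundedness of $\{\hat y^k\} \subseteq \safeYY$ and $\sqrt{\varepsilon_k/\mu_k} \to 0$, $\{y^k\}$ is bounded.

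Claim \ref{item:penalty_parameter_to_zero} is immediate from \cref{lem:dual_sequences_safeguarded}: when $\mu=0$, problem \eqref{eq:penalized_dual} reduces to maximizing $\dualLL$, whose solution set is $\multYY$ by \cref{cor:solution_set_dual_problem}. For claim \ref{item:penalty_parameter_uniformly_positive}, the restriction $\beta_k \in (0,\beta]\cup\{1\}$ combined with $\mu_k \downto \mu>0$ forces $\mu_k=\mu$ for all $k$ large enough, since otherwise infinitely many contractive factors $\le \beta<1$ would eventually push $\mu_k$ below $\mu$. For any solution $\bar y$ of \eqref{eq:penalized_dual}, the optimality condition $\mu(\proj_{\safeYY}(\bar y)-\bar y) \in \partial(-\dualLL)(\bar y)$ is equivalent, via \cite[Cor.\ 12.30]{BauschkeCombettes2011}, to $\bar y = \prox_{-\nicefrac{1}{\mu}\dualLL}(\proj_{\safeYY}(\bar y))$. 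Composing nonexpansiveness of prox and projection then yields $\|y^{k+1}_{\rm e}-\bar y\| \le \|y^k-\bar y\|$ for $k$ large, so $\|y^{k+1}-\bar y\| \le \sqrt{2\varepsilon_k/\mu}+\|y^k-\bar y\|$; the telescoping at the end of the proof of \cref{lem:PPA} shows convergence of $\{\|y^k-\bar y\|\}$, and \cref{lem:Banerts_lemma} with $Y$ equal to the solution set of \eqref{eq:penalized_dual} (into which all accumulation points fall by \cref{lem:dual_sequences_safeguarded}) delivers full convergence.

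For claim \ref{item:multiplier_in_box}, the plan is to cover both regimes $\mu=0$ and $\mu>0$ by a single Fejér-type argument exploiting \eqref{eq:three_point_inequality}. Applying it with $\varphi \coloneqq -\nicefrac{1}{\mu_k}\dualLL$, $z \coloneqq \hat y^k$, and $w \coloneqq \bar y \in \multYY\cap\safeYY$, and using that $\bar y$ minimizes $-\dualLL$, one obtains $\|\hat y^k-y^{k+1}_{\rm e}\|^2+\|y^{k+1}_{\rm e}-\bar y\|^2 \le \|\hat y^k-\bar y\|^2$. The Pythagorean-type estimate for projection onto a closed convex set, valid since $\bar y \in \safeYY$, reads $\dist_{\safeYY}^2(y^k)+\|\hat y^k-\bar y\|^2 \le \|y^k-\bar y\|^2$, and combining the two inequalities yields
\begin{equation*}
	\dist_{\safeYY}^2(y^k) + \|\hat y^k-y^{k+1}_{\rm e}\|^2 + \|y^{k+1}_{\rm e}-\bar y\|^2 \le \|y^k-\bar y\|^2 .
\end{equation*}
Dropping the first two nonnegative terms and adding the inexactness bound gives $\|y^{k+1}-\bar y\| \le \sqrt{2\varepsilon_k/\mu_k}+\|y^k-\bar y\|$, so telescoping yields convergence of $\{\|y^k-\bar y\|\}$ and hence (via $\|y^{k+1}-y^{k+1}_{\rm e}\|\to 0$) of $\{\|y^{k+1}_{\rm e}-\bar y\|\}$ to the same limit; the displayed inequality then forces $\dist_{\safeYY}(y^k)\to 0$. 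Thus every accumulation point of $\{y^k\}$ lies in $\safeYY$ and, by \cref{lem:dual_sequences_safeguarded} (noting that for $\mu>0$ and $\multYY\cap\safeYY\ne\emptyset$ the solution set of \eqref{eq:penalized_dual} is contained in $\multYY$, while for $\mu=0$ it equals $\multYY$), also in $\multYY$. \cref{lem:Banerts_lemma} applied with $Y \coloneqq \multYY\cap\safeYY$ concludes.

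The main obstacle is the uniform treatment in claim \ref{item:multiplier_in_box}, especially when $\mu=0$: there the quasi-Fejér inequality alone traps accumulation points only inside $\multYY$, and it is precisely the combination of the three-point inequality for the proximal operator with the Pythagorean estimate for the projection that produces the extra distance term needed to conclude $\dist_{\safeYY}(y^k)\to 0$ and therefore land inside $\multYY \cap \safeYY$.
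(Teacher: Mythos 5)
Your proof is correct, and for the boundedness claim and assertions \ref{item:penalty_parameter_to_zero} and \ref{item:penalty_parameter_uniformly_positive} it coincides with the paper's argument (fixed-point plus nonexpansiveness of $\prox_{-\nicefrac{1}{\mu_k}\dualLL}\circ\proj_{\safeYY}$, quasi-Fej\'er telescoping, \cref{lem:dual_sequences_safeguarded} for the accumulation points, and \cref{lem:Banerts_lemma}). Where you genuinely diverge is assertion \ref{item:multiplier_in_box}. The paper picks $y_\mu\in\multYY\cap\safeYY$, notes that it is a fixed point of both the proximal operator and the projection, derives the same quasi-Fej\'er inequality, and identifies the accumulation points via the claim that the solution set of \eqref{eq:penalized_dual} equals $\multYY\cap\safeYY$ --- which is immediate for $\mu>0$ but, for $\mu=0$, the solution set of \eqref{eq:penalized_dual} is all of $\multYY$, so that route alone only places accumulation points in $\multYY$ while the Fej\'er property is available only for reference points in $\multYY\cap\safeYY$. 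Your combination of the three-point inequality \eqref{eq:three_point_inequality} at the minimizer $\bar y$ with the Pythagorean estimate for $\proj_{\safeYY}$ produces the extra term $\dist^2_{\safeYY}(y^k)$, whose convergence to zero forces every accumulation point into $\safeYY$ and hence into $\multYY\cap\safeYY$ uniformly in $\mu$. This buys a cleaner, case-free treatment of \ref{item:multiplier_in_box} and makes the localization step in the $\mu=0$ regime fully explicit, at the modest cost of invoking firm nonexpansiveness rather than plain nonexpansiveness; the paper's version is shorter but leans on an ``obviously'' that deserves exactly the justification you supply.
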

\end{mybox}
\begin{proof}
	To start, let us verify that $\{y^k\}$ is bounded.
	Noting that $\{\hat y^k\}\subseteq \safeYY$, $\{\hat y^k\}$ is bounded
	by boundedness of $\safeYY$.
	By assumption $\multYY$ is nonempty, i.e., \eqref{eq:PPA:dual_problem} possesses
	a solution $\bar y$, see \cref{cor:solution_set_dual_problem}. 
	Hence, $\bar y$ is a minimizer of $-\dualLL$,
	and this yields $0\in\partial(-\nicefrac{1}{\mu_k}\dualLL)(\bar y)$, i.e.,
	$\bar y$ is a fixed point of $\prox_{-\nicefrac1{\mu_k}\dualLL}$ for each $k\in\N_0$.
	Then nonexpansiveness of the proximal operator and \eqref{eq:distance_to_exact_prox} guarantee
	\begin{equation}\label{eq:multiplier_sequence_bounded}
		\norm{y^{k+1}-\bar y}
		\leq
		\norm{y^{k+1} - y^{k+1}_{\rm e}}
		+
		\norm{y^{k+1}_{\rm e} - \prox_{-\nicefrac1{\mu_k}\dualLL}(\bar y)}
		\leq
		\sqrt{2\varepsilon_k/\mu_k} + \norm{\hat y^k-\bar y},
	\end{equation}
	where $y^{k+1}_{\rm e}$ is the exact proximal point defined in \eqref{eq:definition_of_exact_prox}. 
	As the right-hand side remains bounded due to \eqref{eq:summability}
	as well as boundedness of $\{\hat y^k\}$,
	$\{y^k\}$ is indeed bounded.
	
	Assertion~\ref{item:penalty_parameter_to_zero} is trivial 
	from \cref{lem:dual_sequences_safeguarded}
	as, for $\mu=0$, the minimizers of \eqref{eq:penalized_dual} are the
	minimizers of $-\dualLL$, i.e., the multipliers from $\multYY$.
	Hence, let us proceed with the proof of 
	assertion~\ref{item:penalty_parameter_uniformly_positive}.
	As $\mu>0$, \cref{alg:ALMsafeguarded} guarantees the existence of $K_0\in\N$
	such that $\mu_k=\mu$ for all $k\in\N$ satisfying $k\geq K_0$.
	Pick a solution $y_\mu\in\YY$ of \eqref{eq:penalized_dual}.
	We note that its solution set is nonempty as $-\dualLL$ is proper and lsc,
	see \cref{lem:value_function,lem:properties_dualLL}, while $\safeYY$ is nonempty and compact.
	Set $\hat{y}_\mu\coloneqq\proj_{\safeYY}(y_\mu)$.
	Then we have $y_\mu=\prox_{-\nicefrac1{\mu}\dualLL}(\hat{y}_\mu)$.
	Particularly, for each $k\in\N$ with $k\geq K_0$,
	it holds
	\begin{align*}
		\norm{y^{k+1}-y_\mu}
		\leq{}&
		\norm{y^{k+1}- y^{k+1}_{\rm e}}+\norm{y^{k+1}_{\rm e} - y_\mu}
		\\
		\leq{}&
		\sqrt{2\varepsilon_k/\mu_k} 
		+ 
		\norm{\prox_{-\nicefrac1{\mu}\dualLL}(\hat y^k)-\prox_{-\nicefrac1{\mu}\dualLL}(\hat{y}_\mu)}
		\\
		\leq{}&
		\sqrt{2\varepsilon_k/\mu_k} 
		+
		\norm{\hat y^k - \hat{y}_\mu}
		=
		\sqrt{2\varepsilon_k/\mu_k} 
		+
		\norm{\proj_{\safeYY}(y^k) - \proj_{\safeYY}(y_\mu)}
		\\
		\leq{}&
		\sqrt{2\varepsilon_k/\mu_k} 
		+
		\norm{y^k-y_\mu},
	\end{align*}
	where we used \eqref{eq:distance_to_exact_prox} and
	the nonexpansiveness of the proximal operator.
	Now, similar arguments as in the proof of \cref{lem:PPA}
	(ultimately applying \Cref{lem:Banerts_lemma})
	show that $\{y^k\}$ converges to a minimizer of \eqref{eq:penalized_dual}.
	
	For the proof of assertion~\ref{item:multiplier_in_box}, we can proceed in similar way.
	Obviously, the solutions of \eqref{eq:penalized_dual} 
	correspond to the points in $\multYY\cap \safeYY$
	in this situation, and this set is nonempty by assumption. 
	Picking $y_\mu\in \multYY \cap \safeYY$, we know, in particular, that $y_\mu$ is a minimizer of $-\dualLL$ and, thus, a fixed point
	of $\prox_{-\nicefrac1{\mu_k}\dualLL}$ for each $k\in\N$, 
	and $\proj_{\safeYY}(y_\mu)=y_\mu$ holds as well.
	Hence, for each $k\in\N$, we find the estimates
	\begin{align*}
		\norm{y^{k+1}-y_\mu}
		\leq{}&
		\norm{y^{k+1}- y^{k+1}_{\rm e}}+\norm{y^{k+1}_{\rm e} - y_\mu}
		\\
		\leq{}&
		\sqrt{2\varepsilon_k/\mu_k} 
		+ 
		\norm{\prox_{-\nicefrac1{\mu_k}\dualLL}(\hat y^k)-\prox_{-\nicefrac1{\mu_k}\dualLL}(y_\mu)}
		\\
		\leq{}&
		\sqrt{2\varepsilon_k/\mu_k} 
		+
		\norm{\hat y^k - y_\mu}
		=
		\sqrt{2\varepsilon_k/\mu_k} 
		+
		\norm{\proj_{\safeYY}(y^k) - \proj_{\safeYY}(y_\mu)}
		\\
		\leq{}&
		\sqrt{2\varepsilon_k/\mu_k} 
		+
		\norm{y^k-y_\mu}
	\end{align*}
	in similar way as above, and the convergence of $\{y^k\}$ to 
	a point in $\multYY \cap \safeYY$ follows as in the
	proof of \cref{lem:PPA} (via \cref{lem:Banerts_lemma}).
\end{proof}

\begin{mybox}
	\begin{remark}\label{rem:rigid_safeguard_and_mu_to_zero}
		Let us note that
		in \cref{thm:global_convergence_safeguarded_alm_dual}\,\ref{item:penalty_parameter_to_zero}
		it remains unclear whether the whole sequence $\{y^k\}$ converges.
		Observe that the proof strategy used to verify assertions~\ref{item:penalty_parameter_uniformly_positive}
		and~\ref{item:multiplier_in_box} cannot be applied 
		as we only obtain inequality \eqref{eq:multiplier_sequence_bounded}
		for each $\bar y\in\multYY$ 
		without a chance to estimate $\norm{\hat y^k-\bar y}$
		from above by $\norm{y^k-\bar y}$ as soon as $\bar y\notin\safeYY$
		(the case $\bar y\in\safeYY$ is covered by assertion~\ref{item:multiplier_in_box}).
		Thus, \cref{lem:Banerts_lemma} does not apply
		as we are not in a position to prove the convergence of $\{\norm{y^k-\bar y}\}$.
	\end{remark}
\end{mybox}

Let us proceed with a statement addressing the primal iterates of \cref{alg:ALMsafeguarded}.
For this purpose we use the couterpart of estimate \eqref{eq:bound_auglag_optivalue} when \eqref{eq:subproblem_solution} is replaced by \eqref{eq:subproblem_solution:safeguarded}, namely
\begin{equation}\label{eq:bound_auglag_optivalue:safeguarded}
	\LL_{\mu_k}(x^k,\hat{y}^k)
	\leq
	\inf\LL_{\mu_k}(\cdot,\hat{y}^k)+\varepsilon_k
	\leq
	\inf \Phi + \varepsilon_k
	=
	\Phi^\star + \varepsilon_k ,
\end{equation}
easily obtained by substituting $y^k$ with $\hat{y}^k$.

\begin{mybox}
\begin{corollary}\label{cor:global_convergence_safeguarded_alm_primal}
		Suppose that \cref{ass:convexP} holds and \eqref{eq:P} possesses a stationary minimizer.
		Let $\{(x^k,z^k,y^k,\hat y^k)\}$ be a sequence generated by 
		\cref{alg:ALMsafeguarded}
		such that \eqref{eq:summability} holds.
		Suppose that either $\mu_k\downtoneq 0$ holds or that $\multYY \cap \safeYY\neq\emptyset$ is satisfied.
		Then we have $c(x^k)-z^k\to 0$ and \eqref{eq:upper_limit_optimal}.
		Particularly, each accumulation point $\bar x\in\XX$ of $\{x^k\}$ is a minimizer of \eqref{eq:P}.
		For each subsequence $\{x^k\}_{k\in K}$ such that $x^k\to_K\bar x$,
		we have $z^k\to_K c(\bar x)$ and $f(x^k)+g(z^k)\to_K\Phi^\star$.
\end{corollary}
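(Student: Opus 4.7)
The plan is to mirror the proof of \cref{thm:global_convergence}, invoking \cref{lem:dual_sequences_safeguarded} and \cref{thm:global_convergence_safeguarded_alm_dual} in place of \cref{lem:PPA,thm:inexactPPA:saddle}. The only new twist is that $\hat y^k$, rather than $y^k$, enters the dual update of \cref{alg:ALMsafeguarded}, so the feasibility residual $c(x^k)-z^k$ must be controlled slightly more carefully; this is precisely where the hypothesis on $\mu_k$ and $\safeYY$ will be used.

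First I would establish $c(x^k)-z^k\to 0$. From \cref{step:ALMsafeguarded:y} one has
\[
c(x^k)-z^k=\mu_k(y^{k+1}-\hat y^k)=\mu_k(y^{k+1}-y^k)+\mu_k(y^k-\hat y^k).
\]
The first summand tends to zero by \cref{lem:dual_sequences_safeguarded}. For the second, \cref{thm:global_convergence_safeguarded_alm_dual} ensures that $\{y^k\}$ is bounded, and $\{\hat y^k\}\subseteq\safeYY$ is bounded by compactness of $\safeYY$, so $\{y^k-\hat y^k\}$ is bounded. If $\mu_k\downtoneq 0$, the product vanishes at once. If instead $\multYY\cap\safeYY\neq\emptyset$, then \cref{thm:global_convergence_safeguarded_alm_dual}\,\ref{item:multiplier_in_box} yields $y^k\to\bar y$ for some $\bar y\in\safeYY$; continuity of $\proj_\safeYY$ gives $\hat y^k\to\bar y$, so $y^k-\hat y^k\to 0$, and boundedness of $\{\mu_k\}$ closes the case.

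Next I would derive \eqref{eq:upper_limit_optimal} by patterning the corresponding step of \cref{thm:global_convergence}. Picking a minimizer $\bar x\in\XX$ of \eqref{eq:P} and combining \eqref{eq:subproblem_solution:safeguarded} with \cite[Lem.\ 3.2]{DeMarchiMehlitz2024}, after discarding the nonnegative term $\frac{1}{2\mu_k}\norm{c(x^k)-z^k}^2$ one finds
\[
f(x^k)+g(z^k)+\innprod{\hat y^k}{c(x^k)-z^k}\leq\LL_{\mu_k}(x^k,\hat y^k)\leq\Phi^\star+\varepsilon_k.
\]
Boundedness of $\{\hat y^k\}\subseteq\safeYY$, together with $c(x^k)-z^k\to 0$ and $\varepsilon_k\downto 0$ (the latter from \eqref{eq:summability} and boundedness of $\{\mu_k\}$), delivers \eqref{eq:upper_limit_optimal} on passing to the upper limit.

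The remaining assertions follow by continuity and lower semicontinuity exactly as in the final paragraph of the proof of \cref{thm:global_convergence}: for $x^k\to_K\bar x$, continuity of $c$ and $c(x^k)-z^k\to 0$ force $z^k\to_K c(\bar x)$, lower semicontinuity of $g$ together with \eqref{eq:upper_limit_optimal} gives $\Phi(\bar x)\leq\Phi^\star$ and hence the minimality of $\bar x$, and the sandwich $\Phi^\star\leq\liminf_{k\to_K}(f(x^k)+g(z^k))\leq\limsup_{k\to_K}(f(x^k)+g(z^k))\leq\Phi^\star$ yields $f(x^k)+g(z^k)\to_K\Phi^\star$. The main conceptual point is to appreciate why the either/or hypothesis is required: without it, $\{y^k\}$ could converge to a point outside $\safeYY$ while $\{\mu_k\}$ stabilizes at a positive value, so that neither factor in $\mu_k(y^k-\hat y^k)$ vanishes and feasibility in the limit fails---exactly the pathology signaled by Andreani et al.\ \cite{andreani2008augmented} and the motivation for strengthening the update rule in \cref{set:algencan_penalty_parameter}.
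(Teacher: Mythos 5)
Your proposal is correct and follows essentially the same route as the paper: the paper likewise controls $c(x^k)-z^k=\mu_k(y^{k+1}-\hat y^k)$ via boundedness (when $\mu_k\downtoneq 0$) or via \cref{thm:global_convergence_safeguarded_alm_dual}\,\ref{item:multiplier_in_box} plus continuity of $\proj_{\safeYY}$ (when $\multYY\cap\safeYY\neq\emptyset$), then uses \eqref{eq:subproblem_solution:safeguarded} with \cite[Lem.\ 3.2]{DeMarchiMehlitz2024} and closes as in \cref{thm:global_convergence}. Your extra splitting of $y^{k+1}-\hat y^k$ through $y^k$ and the appeal to \cref{lem:dual_sequences_safeguarded} are harmless but not needed, since boundedness of $\{y^k\}$ and $\{\hat y^k\}$ already suffices in the vanishing-penalty case.
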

\end{mybox}
\begin{proof}
	By construction of \cref{alg:ALMsafeguarded}, we have, for all $k\in\N$, that
	\[
		c(x^k)-z^k = \mu_k(y^{k+1}-\hat y^k).
	\]
	Let us argue that the right-hand side tends to zero as $k\to\infty$.
	If $\mu_k\downtoneq 0$, this is trivial from the boundedness
	of $\{y^k\}$ and $\{\hat y^k\}$, see \cref{thm:global_convergence_safeguarded_alm_dual}.
	If $\multYY \cap \safeYY\neq\emptyset$, \cref{thm:global_convergence_safeguarded_alm_dual} 
	yields the existence of $\bar y\in \multYY \cap \safeYY$ such that $y^k\to\bar y$,
	and continuity of the projection guarantees $\hat y^k\to \bar y$ due to $\proj_{\safeYY}(\bar y)=\bar y$.
	Hence, we have $y^{k+1}-\hat y^k\to 0$, and the boundedness of $\{\mu_k\}$ also yields
	$\mu_k(y^{k+1}-\hat y^k)\to 0$.
	Hence, we have shown that $c(x^k)-z^k\to 0$.
	
	Based on \eqref{eq:bound_auglag_optivalue:safeguarded},
	we find
	\begin{multline*}
		f(x^k)+g(z^k)+\innprod{\hat y^k}{c(x^k)-z^k}	
		\leq
		f(x^k)+g(z^k)+\innprod{\hat y^k}{c(x^k)-z^k}+\frac1{2\mu_k}\norm{c(x^k)-z^k}^2
		\\
		=
		\LL_{\mu_k}(x^k,\hat y^k)
		\leq
		\Phi^\star + \varepsilon_k.
	\end{multline*}
	Taking the upper limit $k\to\infty$ while keeping the boundedness of $\{\hat y^k\}$ and the convergences
	$c(x^k)-z^k\to 0$ and $\varepsilon_k\downto 0$ in mind, \eqref{eq:upper_limit_optimal} follows.
	The final statements about accumulation points can be shown similarly as in \cref{thm:global_convergence}.
\end{proof}

Based on \cref{cor:global_convergence_safeguarded_alm_primal},
one may ask about primal convergence guarantees of \cref{alg:ALMsafeguarded} in situations where
the penalty parameter does not tend to zero.
For instance, this happens in \cref{set:constant_penalty_parameter}.
As the following simple example illustrates,
\cref{alg:ALMsafeguarded} may not possess any meaningful
primal convergence behavior in this situation.

\begin{mybox}
\begin{example}[Safeguarded ALM with constant penalty]\label{ex:safeguarded_ALM_with_constant_penalty_parameter}
	Consider \eqref{eq:P} with $g\coloneqq\indicator_{\R_+}$ 
	as well as $\func{f}{\R}{\R}$ and $\func{c}{\R}{\R}$ given by
	\[
		f(x)\coloneqq x,\qquad c(x)\coloneqq x.
	\]
	The feasible set of this problem is $\R_+$ and its solution is $\bar x\coloneqq 0$.
	We note that the latter is also stationary with unique multiplier $\bar y\coloneqq -1$.
	
	In \cref{alg:ALMsafeguarded}, we choose $y^0\coloneqq 0$, $\mu_0\coloneqq \mu$ for some $\mu>0$, and $\safeYY\coloneqq\{0\}$.
	Furthermore, we do not update the penalty parameter, i.e.,
	we implement \cref{alg:ALMsafeguarded} in \cref{set:constant_penalty_parameter}.
	Finally, $\varepsilon_k\coloneqq 0$ is used for each $k\in\N_0$.
	Then, in each iteration $k\in\N_0$, we have $\hat y^k = 0$ and $\mu_k = \mu$,
	so that $x^k$ has to be chosen as a minimizer of
	$
		x\mapsto x + \nicefrac{1}{2\mu}\operatorname{min}^2(x,0).
	$
	Hence, we have $x^k=-\mu$ for each $k\in\N_0$,
	and the constant sequence $\{x^k\}$ converges to an infeasible point.

	Notice that convergence continues to fail as long as the set $\safeYY$ does not contain the multiplier $\bar{y}$.
	Take for instance $\safeYY \coloneqq [-\nicefrac{1}{2},\nicefrac{1}{2}]$.
	Then, for any $\mu>0$ and initial $y^0\coloneqq -1$, the primal update $x^k$ which minimizes
	$
	x \mapsto
	x + \nicefrac{1}{2\mu} \operatorname{min}^2(x+\mu \hat{y}^k,0)
	$
	is given by $x^k = -\mu (1 + \hat{y}^k)$, so that the dual update rule gives $y^k = -1$ for all $k\in\N_0$, 
	and, consequently, $\hat{y}^{k}=\proj_{\safeYY}(y^k) = -\nicefrac{1}{2}$.
	Since $x^{k} = -\nicefrac{\mu}{2}$ for each $k\in\N_0$, 
	it is evident that the sequence $\{x^k\}$ does not approach the minimizer $\bar{x}$ but an infeasible point.
\end{example}
\end{mybox}

Nevertheless, it is possible to distill satisfying primal convergence properties
of \cref{alg:ALMsafeguarded} even if the penalty parameter does not tend to zero.
For example, this happens to be the case in \cref{set:algencan_penalty_parameter},
see \cite[Thm~4.3]{DeMarchiMehlitz2024} as well.

\begin{mybox}
\begin{corollary}\label{cor:global_convergence_safeguarded_alm_primal_HPR}
		Suppose that \cref{ass:convexP} holds and \eqref{eq:P} possesses a stationary minimizer. 
		Let $\{(x^k,z^k,y^k,\hat y^k)\}$ be a sequence generated by 
		\cref{alg:ALMsafeguarded}
		such that \eqref{eq:summability} holds.
		Assume that the penalty parameter is updated according to \cref{set:algencan_penalty_parameter}.
		Then we have $c(x^k)-z^k\to 0$ and \eqref{eq:upper_limit_optimal}.
		Particularly, each accumulation point $\bar x\in\XX$ of $\{x^k\}$ is a minimizer of \eqref{eq:P}.
		For each subsequence $\{x^k\}_{k\in K}$ such that $x^k\to_K\bar x$,
		we have $z^k\to_K c(\bar x)$ and $f(x^k)+g(z^k)\to_K\Phi^\star$.
\end{corollary}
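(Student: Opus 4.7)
The plan is to reduce to the analysis already carried out in \cref{cor:global_convergence_safeguarded_alm_primal} by a case distinction on the limit behavior of $\{\mu_k\}$. Since $\{\mu_k\}$ is monotonically nonincreasing and bounded below by zero, it converges to some $\mu\geq 0$. In the first case $\mu=0$, we have $\mu_k\downtoneq 0$, so \cref{cor:global_convergence_safeguarded_alm_primal} applies verbatim and delivers all the desired conclusions.

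The interesting case is $\mu>0$. Here the \textsc{Algencan} update from \cref{set:algencan_penalty_parameter} ensures that there exists $K_0\in\N$ such that $\mu_k=\mu$ for all $k\geq K_0$, which by definition of the update rule forces
\[
	\norm{c(x^k)-z^k}\leq\theta\norm{c(x^{k-1})-z^{k-1}}\qquad\forall k>K_0.
\]
Iterating this geometric contraction with $\theta\in(0,1)$ yields $c(x^k)-z^k\to 0$ directly, without any appeal to the convergence of $\{y^k\}$ or to $\multYY\cap\safeYY$ being nonempty. In particular, in both cases of the dichotomy we reach $c(x^k)-z^k\to 0$.

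Once $c(x^k)-z^k\to 0$ is in hand, the remaining assertions are essentially a repetition of the closing arguments in \cref{cor:global_convergence_safeguarded_alm_primal}. Namely, using the inexact subproblem inequality \eqref{eq:subproblem_solution:safeguarded} together with \cite[Lem.\ 3.2]{DeMarchiMehlitz2024}, we obtain
\[
	f(x^k)+g(z^k)+\innprod{\hat y^k}{c(x^k)-z^k}
	\leq
	\LL_{\mu_k}(x^k,\hat y^k)
	\leq
	\LL_{\mu_k}(\bar x,\hat y^k)+\varepsilon_k
	\leq
	\Phi^\star+\varepsilon_k
\]
for any minimizer $\bar x$ of \eqref{eq:P}. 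Exploiting boundedness of $\{\hat y^k\}\subseteq\safeYY$, the already established convergence $c(x^k)-z^k\to 0$, and $\varepsilon_k\downto 0$ (which follows from \eqref{eq:summability} and boundedness of $\{\mu_k\}$), passing to the upper limit yields \eqref{eq:upper_limit_optimal}. Finally, for any convergent subsequence $x^k\to_K\bar x$, continuity of $c$ combined with $c(x^k)-z^k\to 0$ gives $z^k\to_K c(\bar x)$, whence lower semicontinuity of $g$ and \eqref{eq:upper_limit_optimal} imply $\Phi(\bar x)\leq\Phi^\star$, so $\bar x$ minimizes \eqref{eq:P} and $f(x^k)+g(z^k)\to_K\Phi^\star$, exactly as in the proof of \cref{thm:global_convergence}.

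The only step that requires genuine new input compared to \cref{cor:global_convergence_safeguarded_alm_primal} is the case $\mu>0$, where we must extract feasibility of accumulation points without dual convergence; but the \textsc{Algencan} rule itself is tailor-made for this, so the obstacle is mild and the argument is almost mechanical.
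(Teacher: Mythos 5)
Your proposal is correct and follows essentially the same route as the paper's proof: a dichotomy on the limit of $\{\mu_k\}$, invoking \cref{cor:global_convergence_safeguarded_alm_primal} when $\mu_k\downtoneq 0$, and extracting $c(x^k)-z^k\to 0$ from the geometric contraction enforced by \cref{set:algencan_penalty_parameter} when $\{\mu_k\}$ stabilizes, before repeating the closing arguments of the earlier corollary. No gaps.
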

\end{mybox}
\begin{proof}
	If $\mu_k\downtoneq 0$, the assertion is clear from \cref{cor:global_convergence_safeguarded_alm_primal}.
	Hence, assume that $\{\mu_k\}$ is bounded away from zero.
	According to \cref{set:algencan_penalty_parameter}, 
	this means that there are $\theta\in(0,1)$ and an index $K_0\in\N$ such that
	\[
		\norm{c(x^k)-z^k}\leq\theta\norm{c(x^{k-1})-z^{k-1}}
	\]
	is valid for all $k\in\N$ such that $k\geq K_0$.
	Hence, $c(x^k)-z^k\to 0$ follows.
	Now, we can proceed as in the proof of \cref{cor:global_convergence_safeguarded_alm_primal}
	to verify the remaining claims.
\end{proof}

Let us close this section by inspecting the behavior of \cref{alg:ALMsafeguarded}
in the irregular situation.
We start with some general observations.
\begin{mybox}
	\begin{theorem}\label{thm:global_convergence_irregular_safeguarded}
		Suppose that \cref{ass:convexP} holds.
		Furthermore, let all minimizers of \eqref{eq:P} be nonstationary,
		and let the value function $\mathcal V$ from \eqref{eq:value_function}
		be proper and lsc.
		Let $\{(x^k,z^k,y^k,\hat y^k)\}$ be a sequence generated by \cref{alg:ALMsafeguarded}
		such that \eqref{eq:summability} holds.
		Let $\mu\geq 0$ be such that $\mu_k\downto\mu$.
		Then the following assertions hold.
		\begin{enumerate}[label=(\roman{*})]
			\item\label{item:penalty_parameter_to_zero_irregular}  
				If $\mu=0$, 
				then we have $\norm{y^k}\to\infty$ and \eqref{eq:upper_limit_optimal}.
				Particularly, each accumulation point $\bar x\in\XX$ of $\{x^k\}$ 
				is a minimizer of \eqref{eq:P}.
				For each subsequence $\{x^k\}_{k\in K}$ such that $x^k\to_K\bar x$,
				we have $z^k\to_K c(\bar x)$ and $f(x^k)+g(z^k)\to_K\Phi^\star$.
			\item\label{item:penalty_parameter_uniformly_positive_irregular} 
				If $\mu>0$, then $\{y^k\}$ converges to a solution of \eqref{eq:penalized_dual}.
		\end{enumerate}
	\end{theorem}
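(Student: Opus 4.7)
The plan is to split according to whether $\mu=0$ or $\mu>0$, mirroring the structure of \cref{thm:global_convergence_safeguarded_alm_dual}. As a common starting point, I would exploit the identity
\[
	\LL_{\mu_k}(x^k,\hat y^k)
	=
	f(x^k)+g(z^k)+\frac{\mu_k}{2}\norm{y^{k+1}}^2-\frac{\mu_k}{2}\norm{\hat y^k}^2,
\]
which follows from $z^k=\prox_{\mu_k g}(c(x^k)+\mu_k\hat y^k)$ and the definition of $y^{k+1}$, combined with the subproblem bound $\LL_{\mu_k}(x^k,\hat y^k)\leq\Phi^\star+\varepsilon_k$ obtained by inserting a minimizer of \eqref{eq:P} into the augmented Lagrangian via \cite[Lem.\ 3.2]{DeMarchiMehlitz2024}. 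The bridge to the dual is \cref{lem:dual_sequences_safeguarded}, which supplies \eqref{eq:square_summability} and the fact that each accumulation point of $\{y^k\}$ solves \eqref{eq:penalized_dual}.

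For part~\ref{item:penalty_parameter_to_zero_irregular}, I would first prove $\norm{y^k}\to\infty$ by contradiction: a bounded subsequence would admit, via \cref{lem:dual_sequences_safeguarded}, an accumulation point that minimizes $-\dualLL$ (since \eqref{eq:penalized_dual} collapses to \eqref{eq:PPA:dual_problem} when $\mu=0$), and then \cref{lem:solutions_of_dual_are_multipliers} would force the existence of a stationary minimizer of \eqref{eq:P}, contradicting the irregularity hypothesis. The inequality \eqref{eq:upper_limit_optimal} then drops out of the identity above after discarding the nonpositive term $-\frac{\mu_k}{2}\norm{y^{k+1}}^2$ and using boundedness of $\safeYY$, $\mu_k\downto 0$, and $\varepsilon_k\downto 0$ to drive $\frac{\mu_k}{2}\norm{\hat y^k}^2+\varepsilon_k$ to zero. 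For the subsequence statements, given $x^k\to_K\bar x$, I would argue that $\{z^k\}_{k\in K}$ is bounded: otherwise $\mu_k y^{k+1}=c(x^k)-z^k+\mu_k\hat y^k$ would be unbounded along a subsubsequence, whence $\mu_k\norm{y^{k+1}}^2=\norm{\mu_k y^{k+1}}^2/\mu_k\to\infty$ would violate the identity. Passing to a convergent subsubsequence $z^k\to_{K'}z^*$, the same blow-up argument rules out $z^*\neq c(\bar x)$, so $z^k\to_K c(\bar x)$. Minimality of $\bar x$ and $f(x^k)+g(z^k)\to_K\Phi^\star$ then follow from \eqref{eq:upper_limit_optimal} and the lower semicontinuity of $g$, exactly as in \cref{thm:global_convergence,thm:global_convergence_irregular}.

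For part~\ref{item:penalty_parameter_uniformly_positive_irregular}, the update rule in \cref{alg:ALMsafeguarded} forces $\mu_k=\mu$ for all sufficiently large $k$, since the ratios $\beta_k$ live in $(0,\beta]\cup\{1\}$ and $\mu_k\downto\mu>0$ excludes infinitely many factors in $(0,\beta]$. I would first establish that \eqref{eq:penalized_dual} admits a minimizer: $-\dualLL$ is proper, lsc, convex, and bounded below by \cref{lem:properties_dualLL}, while $\dist_{\safeYY}^2$ is continuous and coercive by boundedness of $\safeYY$, so the sum is coercive, lsc, and proper. The argument of \cref{thm:global_convergence_safeguarded_alm_dual}\,\ref{item:penalty_parameter_uniformly_positive} then transfers verbatim: picking a minimizer $y_\mu$ of \eqref{eq:penalized_dual} and setting $\hat y_\mu\coloneqq\proj_{\safeYY}(y_\mu)$ yields the fixed-point identity $y_\mu=\prox_{-\nicefrac{1}{\mu}\dualLL}(\hat y_\mu)$, from which nonexpansiveness of both the proximal and projection operators together with \eqref{eq:distance_to_exact_prox} gives $\norm{y^{k+1}-y_\mu}\leq\sqrt{2\varepsilon_k/\mu_k}+\norm{y^k-y_\mu}$. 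Combining this with \cref{lem:Banerts_lemma} and the accumulation-point statement of \cref{lem:dual_sequences_safeguarded} delivers convergence of $\{y^k\}$ to a solution of \eqref{eq:penalized_dual}.

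The hard part will be the subsequence analysis in case~\ref{item:penalty_parameter_to_zero_irregular}: unlike in the classical-ALM setting of \cref{thm:global_convergence_irregular}, the constraint violation $c(x^k)-z^k=\mu_k(y^{k+1}-\hat y^k)$ need not vanish along the full sequence, because $y^{k+1}$ can blow up faster than $\mu_k^{-1}$, so one cannot directly conclude $z^k\to c(\bar x)$ from a convergent $x^k$-subsequence. The remedy is to read the quadratic $\mu_k\norm{y^{k+1}}^2$ term in the augmented Lagrangian identity as a coercive penalty on $\mu_k y^{k+1}$, which simultaneously rules out $\norm{z^k}\to\infty$ and anchors $z^k$ at $c(\bar x)$ along any convergent primal subsequence; this is the genuinely new ingredient beyond \cref{thm:global_convergence_irregular,thm:global_convergence_safeguarded_alm_dual}.
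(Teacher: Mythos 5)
Your proposal is correct, and for part~\ref{item:penalty_parameter_uniformly_positive_irregular} it coincides with the paper's argument (existence of minimizers of \eqref{eq:penalized_dual} from compactness of $\safeYY$ and properness, lower semicontinuity, and lower boundedness of $-\dualLL$, then a verbatim transfer of the proof of \cref{thm:global_convergence_safeguarded_alm_dual}\,\ref{item:penalty_parameter_uniformly_positive}). For part~\ref{item:penalty_parameter_to_zero_irregular}, the divergence claim $\norm{y^k}\to\infty$ is proved exactly as in the paper, but for the remaining primal statements the paper simply defers to \cite[Thm~4.3]{DeMarchiMehlitz2024} (replacing one lemma there by \cite[Prop.\ 4\,(c)]{FriedlanderGoodwinHoheisel2023} to drop closedness of $\dom g$); the machinery behind that citation, visible in the proof of \cref{thm:global_convergence_special_safeguarded_alm_irregular}, shows $\mu_k g(z^k)+\tfrac12\norm{z^k-c(x^k)-\mu_k\hat y^k}^2\to_K 0$ and then extracts $z^k\to_K c(\bar x)$. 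Your self-contained route — reading $\tfrac{\mu_k}{2}\norm{y^{k+1}}^2=\tfrac{1}{2\mu_k}\norm{c(x^k)+\mu_k\hat y^k-z^k}^2$ as a coercive penalty that both bounds $\{z^k\}_{k\in K}$ and forces any limit of $z^k$ to equal $c(\bar x)$ — is the same idea implemented directly, and it buys independence from the two external lemmas at the cost of a slightly longer in-line argument. It also cleanly yields \eqref{eq:upper_limit_optimal} for the full sequence by discarding the nonnegative quadratic term, which is exactly how the stronger upper-limit estimate (as opposed to \eqref{eq:lower_limit_optimal} in \cref{thm:global_convergence_irregular}) arises from the boundedness of $\safeYY$.

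One point you should make explicit: when you claim that $\norm{z^k}\to\infty$ along a subsubsequence "would violate the identity", the term $g(z^k)$ could a priori tend to $-\infty$ and compensate the blow-up of $\tfrac{\mu_k}{2}\norm{y^{k+1}}^2$. To close this, invoke the affine minorization of the proper, lsc, convex function $g$, i.e., $g(z)\geq\innprod{a}{z}+b$ for some $a\in\YY$, $b\in\R$: since $\mu_k\leq\mu_0$, the quadratic $\tfrac{1}{2\mu_k}\bigl(\norm{z^k}-M\bigr)^2$ dominates the at-most-linear decay of $g(z^k)$, so the left-hand side of the identity still diverges. The same minorization (or lower semicontinuity of $g$ at the bounded limit $z^*$) is what keeps $g(z^k)$ bounded below in the step ruling out $z^*\neq c(\bar x)$. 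With that addition the argument is airtight.
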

\end{mybox}
\begin{proof}
	Let us start to prove the first assertion.
	Condition $\norm{y^k}\to\infty$ readily follows from \cref{lem:dual_sequences_safeguarded}.
	Indeed, supposing that $\{y^k\}$ possesses a bounded subsequence,
	it has an accumulation point which then must be a solution of \eqref{eq:PPA:dual_problem},
	i.e., a multiplier, see \cref{lem:solutions_of_dual_are_multipliers}, contradicting our assumptions.
	The remaining statements follow precisely as in the proof of \cite[Thm~4.3]{DeMarchiMehlitz2024}.
	The latter result assumes closedness of $\dom g$,
	but this is not required in the present setting as
	$g$ is convex.
	Particularly, in the proof of \cite[Thm~4.3]{DeMarchiMehlitz2024},
	\cite[Lem.\ 2.1]{DeMarchiMehlitz2024} can be replaced by 
	\cite[Prop.\ 4\,(c)]{FriedlanderGoodwinHoheisel2023}
	to remove this assumption.
	
	The second assertion follows from the fact that \eqref{eq:penalized_dual}
	still possesses minimizers by compactness of $\safeYY$ and lower boundedness, properness, and lower semicontinuity of $-\dualLL$,
	see \cref{lem:properties_dualLL}.
	Hence, the proof of \cref{thm:global_convergence_safeguarded_alm_dual}\,\ref{item:penalty_parameter_uniformly_positive}
	applies in the present situation as well.
\end{proof}
 
Similar as in \cref{ex:safeguarded_ALM_with_constant_penalty_parameter},
we can show that \cref{alg:ALMsafeguarded} does not necessarily possess
any meaningful primal convergence properties 
in the irregular situation
if the penalty parameter
does not tend to zero.

\begin{mybox}
\begin{example}[Safeguarded ALM with constant penalty---reloaded]\label{ex:safeguarded_ALM_with_constant_penalty_parameter_irregular}
	Consider \eqref{eq:P} as in \cref{example:cvx_deg}.
	In \cref{alg:ALMsafeguarded}, we choose $y^0\coloneqq 0$, $\mu_0\coloneqq \mu$ for some $\mu>0$, and $\safeYY\coloneqq\{0\}$.
	Furthermore, we 
	implement \cref{alg:ALMsafeguarded} in \cref{set:constant_penalty_parameter}
	and use $\varepsilon_k\coloneqq 0$ for each $k\in\N_0$.
	Then, in each iteration $k\in\N_0$, we have $\hat y^k = 0$ and $\mu_k = \mu$,
	so that $x^k$ has to be chosen as a minimizer of
	$x\mapsto x + \nicefrac{1}{2\mu}\operatorname{max}^2(x^2,0)$.
	Hence, we find $x^k=-\sqrt[3]{\nicefrac{\mu}{2}}$ for each $k\in\N_0$.
	Observe that the constant sequence $\{x^k\}$ converges to an infeasible point.
\end{example}
\end{mybox}

If the penalty parameter is updated according to \cref{set:algencan_penalty_parameter}
in \cref{alg:ALMsafeguarded}, primal convergence guarantees can be given even in situations
where all minimizers are nonstationary and the penalty parameter does not tend to zero.
For the purpose of completeness, we state the associated result
which is a consequence of \cite[Thm~4.3]{DeMarchiMehlitz2024},
whose assumption that $\dom g$ is closed becomes superfluous in the convex case,
see \cite[Prop.\ 4\,(c)]{FriedlanderGoodwinHoheisel2023}.

\begin{mybox}
	\begin{proposition}\label{thm:global_convergence_irregular_safeguarded_primal}
		Suppose that \cref{ass:convexP} holds.
		Furthermore, let all minimizers of \eqref{eq:P} be nonstationary,
		and let the value function $\mathcal V$ from \eqref{eq:value_function}
		be proper and lsc.
		Assume that the penalty parameter is updated according to \cref{set:algencan_penalty_parameter}.
		Let $\{(x^k,z^k,y^k,\hat y^k)\}$ be a sequence generated by \cref{alg:ALMsafeguarded}.
		Then we have \eqref{eq:upper_limit_optimal}
		and each accumulation point $\bar x\in\XX$ of $\{x^k\}$ is a minimizer of \eqref{eq:P}.
		For each subsequence $\{x^k\}_{k\in K}$ such that $x^k\to_K\bar x$,
		we have $z^k\to_K c(\bar x)$ and $f(x^k)+g(z^k)\to_K\Phi^\star$.
	\end{proposition}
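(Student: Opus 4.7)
The plan is to split the argument along the two possible regimes of the penalty sequence $\{\mu_k\}$ arising from \cref{set:algencan_penalty_parameter}: either $\mu_k\downtoneq 0$, or $\{\mu_k\}$ eventually becomes constant at some value $\mu>0$. In both regimes the safeguarding $\{\hat y^k\}\subseteq\safeYY$ provides, for free, a bounded sequence of multiplier estimates, which is exactly what we need to push the subproblem inequality to the limit.

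For the regime $\mu_k\downtoneq 0$, the conclusion is already a direct consequence of \cref{thm:global_convergence_irregular_safeguarded}\,\ref{item:penalty_parameter_to_zero_irregular} (the properness and lower semicontinuity of $\mathcal V$ and nonstationarity of all minimizers are inherited); so no extra work is needed here. The substantive part is the complementary regime where the penalty parameter freezes: by the update rule of \cref{set:algencan_penalty_parameter}, there exist $\theta\in(0,1)$ and $K_0\in\N$ with
\[
\norm{c(x^k)-z^k}\leq\theta\norm{c(x^{k-1})-z^{k-1}}\qquad\forall k\geq K_0,
\]
and hence $c(x^k)-z^k\to 0$ unconditionally. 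From \eqref{eq:subproblem_solution:safeguarded} combined with \cite[Lem.\ 3.2]{DeMarchiMehlitz2024}, one obtains, for each minimizer $\bar x\in\XX$,
\[
f(x^k)+g(z^k)+\innprod{\hat y^k}{c(x^k)-z^k}
\leq
\LL_{\mu_k}(x^k,\hat y^k)
\leq
\LL_{\mu_k}(\bar x,\hat y^k)+\varepsilon_k
\leq
\Phi^\star+\varepsilon_k,
\]
and passing to the upper limit yields \eqref{eq:upper_limit_optimal}, because $\{\hat y^k\}$ is bounded by compactness of $\safeYY$, $c(x^k)-z^k\to 0$, and $\varepsilon_k\downto 0$. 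The statements about primal accumulation points then follow along the same continuity/lower semicontinuity argument already used in \cref{thm:global_convergence,cor:global_convergence_safeguarded_alm_primal_HPR}: continuity of $c$ forces $z^k\to_K c(\bar x)$, lower semicontinuity of $g$ together with \eqref{eq:upper_limit_optimal} forces $\Phi(\bar x)\leq\Phi^\star$, and the last convergence $f(x^k)+g(z^k)\to_K\Phi^\star$ follows from sandwiching.

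The main obstacle, in principle, is that, contrary to the regular situation, we cannot lean on \cref{thm:global_convergence_irregular_safeguarded}\,\ref{item:penalty_parameter_uniformly_positive_irregular} to obtain convergence of $\{y^k\}$---indeed one expects $\{y^k\}$ to diverge when no multiplier exists---so the PPA lens does not directly help in the $\mu>0$ regime. This is precisely why the penalty-update mechanism of \cref{set:algencan_penalty_parameter} is essential: it supplies the missing feasibility guarantee $c(x^k)-z^k\to 0$ without any control on $\{y^k\}$. We note that closedness of $\dom g$, used in \cite[Thm~4.3]{DeMarchiMehlitz2024}, is redundant here by \cref{ass:convexP} together with \cite[Prop.\ 4\,(c)]{FriedlanderGoodwinHoheisel2023}, so the statement follows in full generality as a transparent specialization of that reference.
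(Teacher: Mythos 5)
The paper gives no written proof here: it simply records the proposition as a consequence of \cite[Thm~4.3]{DeMarchiMehlitz2024} (with the closedness of $\dom g$ removed via \cite[Prop.\ 4\,(c)]{FriedlanderGoodwinHoheisel2023}). Your two-case split on the limit of $\{\mu_k\}$ is a reasonable reconstruction, and your treatment of the regime where $\{\mu_k\}$ freezes at some $\mu>0$ is correct and complete: \cref{set:algencan_penalty_parameter} forces the geometric decay $\norm{c(x^k)-z^k}\leq\theta\norm{c(x^{k-1})-z^{k-1}}$, hence $c(x^k)-z^k\to 0$, and the rest goes through with the bounded safeguarded multipliers.

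The gap is in the regime $\mu_k\downtoneq 0$. You dispose of it by invoking \cref{thm:global_convergence_irregular_safeguarded}\,\ref{item:penalty_parameter_to_zero_irregular}, but that theorem assumes the summability condition \eqref{eq:summability}, which \cref{thm:global_convergence_irregular_safeguarded_primal} deliberately does \emph{not} assume --- see the remark immediately after the proposition, which contrasts it with \cref{cor:global_convergence_safeguarded_alm_primal_HPR} precisely on this point. Without \eqref{eq:summability} you have no control on $\{y^k\}$ and hence no way to deduce $c(x^k)-z^k\to 0$ from $c(x^k)-z^k=\mu_k(y^{k+1}-\hat y^k)$, so your inner-product estimate $f(x^k)+g(z^k)+\innprod{\hat y^k}{c(x^k)-z^k}\leq\Phi^\star+\varepsilon_k$ cannot be passed to the limit in this case either. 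The repair is the argument used in the first case of the proof of \cref{thm:global_convergence_special_safeguarded_alm_irregular}: write
\begin{equation*}
	f(x^k)+g(z^k)+\frac{1}{2\mu_k}\norm{c(x^k)+\mu_k\hat y^k-z^k}^2-\frac{\mu_k}{2}\norm{\hat y^k}^2
	=
	\LL_{\mu_k}(x^k,\hat y^k)
	\leq
	\Phi^\star+\varepsilon_k,
\end{equation*}
drop the nonnegative quadratic term, and use that $\mu_k\norm{\hat y^k}^2\to 0$ by boundedness of $\safeYY$ to obtain \eqref{eq:upper_limit_optimal}; then, along a convergent primal subsequence, exploit $\mu_k\hat y^k\to 0$ together with \cite[Prop.\ 4\,(c)]{FriedlanderGoodwinHoheisel2023} and \cite[Lem.\ 2.2]{DeMarchiMehlitz2024} to get $c(x^k)-z^k\to_K 0$, after which the accumulation-point claims follow as in \cref{thm:global_convergence}. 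With that substitution your proof is complete.
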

\end{mybox}

Note that, in the regular case, we found a slightly different result 
in \cref{cor:global_convergence_safeguarded_alm_primal_HPR}.
Therein, the summability condition \eqref{eq:summability} was postulated, additionally,
and it was possible to show $c(x^k)-z^k\to 0$.

\section{ALM with elastic safeguarding}\label{sec:safeguarded_ALM_elastic}

In \cref{sec:safeguarded_ALM}, 
we observed that the safeguarded ALM with prescribed safeguard from \cref{alg:ALMsafeguarded} 
demands a possibly varying penalty parameter to provide some convergence guarantees,
unless the safeguarding set $\safeYY$ is ``large enough'' to cover a multiplier from $\multYY$ (in case of existence).
In order to avoid a deliberate choice of $\safeYY$,
since $\multYY$ is usually unknown a priori,
and
to make the AL scheme more adaptive, we now consider
\cref{alg:ALMsafeguarded_growing},
where the dual safeguarding set is allowed to grow.
We say ``elastic'' as opposed to the ``rigid'' strategy with prefixed set $\safeYY$.
In fact, what really matters in the global convergence analysis \cite{birgin2014practical,DeMarchiMehlitz2024} 
is that the following condition remains guaranteed:
\[
	\mu_k \hat{y}^k \to 0
	\quad\text{as}\quad
	\mu_k\downtoneq 0 .
\]
Then we can also have an elastic dual safeguarding set $\rho_k \safeYY$, which grows with scaling factor $\rho_k > 0$, 
as long as $\mu_k\rho_k\downtoneq 0$ whenever $\mu_k\downtoneq 0$.
This is precisely what happens in \cref{alg:ALMsafeguarded_growing}.

\begin{algorithm2e}[htb]
	\DontPrintSemicolon
	\KwData{$y^0\in \YY$; $\mu_0>0$; nonempty, convex, compact set $\safeYY\subseteq\YY$; 
	$\theta,\eta\in (0,1)$; $\beta\in(\eta^2,\eta)$\;}
	Set $\rho_0 \coloneqq 1$ and $V^{-1}\coloneqq\infty$.\;
	\For{$k = 0,1,2\ldots$}{
		Set $\hat{y}^k\coloneqq \proj_{\rho_k \safeYY}(y^k)$. 
		\label{step:ALMsafeguarded_growing:ysafe}%
		\;
		Select $\varepsilon_k\geq 0$ and compute $x^k\in \XX$ such that \eqref{eq:subproblem_solution:safeguarded} holds.
		\label{step:ALMsafeguarded_growing:subproblem}%
		\;
		Set 
		\label{step:ALMsafeguarded_growing:z}%
		$z^k\coloneqq \prox_{\mu_k g}(c(x^k) + \mu_k \hat y^k)$ and $V^k \coloneqq \norm{c(x^k)-z^k}$.%
		\;
		Set 
		\label{step:ALMsafeguarded_growing:y}%
		$y^{k+1} \coloneqq \hat y^k + \mu_k^{-1} [c(x^k) - z^k]$.%
		\;
		\If{$V^k \leq \theta V^{k-1}$%
			\label{step:ALMsafeguarded_growing:check_V}}{%
			Set $\mu_{k+1} \coloneqq \mu_k$ and $\rho_{k+1} \coloneqq \rho_k$.%
			\label{step:ALMsafeguarded_growing:keep_mu}}
		\Else{%
			Set $\mu_{k+1} \coloneqq \beta \mu_k$ and $\rho_{k+1} \coloneqq \nicefrac{\eta}{\beta}\, \rho_k$.%
			\label{step:ALMsafeguarded_growing:change_mu}}
	}
	\caption{Safeguarded ALM for \eqref{eq:P} with elastic safeguard.}
	\label{alg:ALMsafeguarded_growing}
\end{algorithm2e}

Observe that the (inverse) stepsizes $\{\mu_k\}$ are merely nonincreasing in the standard ALM from \cref{alg:ALMclassic}
and its safeguarded version from \cref{alg:ALMsafeguarded},
whereas we adopt the particular update rule from
\cref{set:algencan_penalty_parameter}
in \cref{alg:ALMsafeguarded_growing} to make the elastic mechanism more explicit.
The particular update at \cref{step:ALMsafeguarded_growing:change_mu} makes it possible to enlarge the safeguarding set, 
thus going beyond a debatable choice of $\safeYY$,
while maintaining under control the term $\mu_k \hat{y}^k$.
In fact, by selecting $\eta \in (\beta,1)$, the scaling factor $\rho_{k+1} > \rho_k$ increases 
but the product $\mu_{k+1} \rho_{k+1} = \eta \mu_k \rho_k < \mu_k \rho_k$ decreases. 
For $\eta=\beta$, the safeguarding mechanism in \cref{alg:ALMsafeguarded_growing} coincides with the rigid one in \cref{alg:ALMsafeguarded},
since $\rho_{k+1}=\rho_k$ for all $k\in\N$.
Instead, with $\eta\in(0,\beta)$ the scaling factor decreases at \cref{step:ALMsafeguarded_growing:change_mu}, effectively shrinking the safeguard instead of enlarging it.
Let us also note that choosing $\beta\in(\eta^2,1)$, \cref{step:ALMsafeguarded_growing:change_mu} guarantees
$\mu_{k+1} \rho_{k+1}^2 = \nicefrac{\eta^2}{\beta}\, \mu_k \rho_k^2 < \mu_k \rho_k^2$, so that we even have
\begin{equation}\label{eq:elastic_safeguard_requirement_irreg}
	\mu_k \norm{\hat{y}^k}^2 \to 0
	\quad\text{as}\quad
	\mu_k\downtoneq 0 .
\end{equation}
The latter will be decisive for the global convergence analysis in the irregular situation.

The elastic safeguard mechanism allows us to unite the advantages of both the classical ALM
(with strong guarantees in the convex setting) and the rigidly safeguarded ALM
(which has better global guarantees in the nonconvex setting),
as revealed by the convergence analysis in \cref{sec:convergence_elastic_safeguard}.
Although beyond the scope of this work, 
we note that global convergence results on general composite optimization problems 
can be readily extended from rigid to elastic safeguarding,
see \cite[Thm~4.1, Thm~6.2]{birgin2014practical} and \cite[Thm~4.3]{DeMarchiMehlitz2024}.

\begin{mybox}
	\begin{remark}
		As a historical note, we shall mention 
		Robinson's doctoral dissertation and the primal-dual BCL algorithm therein, see \cite[Alg.\ 4.2.1]{robinson2007primal},
		where expanding ``artificial bounds'' are imposed on the dual variables.
		This mechanism was then interpreted in the same context as a ``regularization by bounding the multipliers''
		by Gill and Robinson \cite[\S 4]{GillRobinson2012}.
		However, their augmented Lagrangian scheme (or their convergence analysis, at least)
		relies on the classical BCL approach of Conn et al.\ \cite{conn1991globally}
		as globalization technique,
		and not on safeguarding.
		
		To the best of our knowledge,
		this idea was reconsidered only in \cite[Alg.\ 3]{demarchi2021augmented},
		where it is truly adopted as a safeguard for globalization purposes,
		in the sense of \cite{andreani2008augmented,birgin2014practical,kanzow2017example},
		but theoretically analyzed only for the simple case 
		where the scaling factors $\{\rho_k\}$ remain bounded.
	\end{remark}
\end{mybox}

\subsection{Convergence analysis}\label{sec:convergence_elastic_safeguard}

To study the behavior of the dual sequence $\{y^k\}$ generated by \cref{alg:ALMsafeguarded_growing}, 
in analogy with \eqref{eq:alternating_operators},
we have to investigate the more dynamic dual update scheme
\begin{equation*}
	y^{k+1}
	\in
	\prox_{- \nicefrac{1}{\mu_k} \dualLL}^{\varepsilon_k} \left( \proj_{\rho_k \safeYY}(y^k) \right)
\end{equation*}
which is not covered by \cite{ArizaLopezNicolae2015,banert2014backward};
not only is the proximal operator evaluated
inexactly, but also the inner operator changes along the iterations.

The following \cref{thm:global_convergence_special_safeguarded_alm} relates to \cref{alg:ALMsafeguarded_growing} and enhances the dual convergence results for \cref{alg:ALMsafeguarded},
obtained in \cref{thm:global_convergence_safeguarded_alm_dual}, in two ways.
First, even for $\mu_k\downtoneq 0$, the full convergence of the dual sequence $\{y^k\}$ is obtained.
Second, even for $\mu_k\downto\mu$ for some $\mu>0$, the limit is a multiplier,
and not merely a minimizer of \eqref{eq:penalized_dual}.

\begin{mybox}
\begin{theorem}\label{thm:global_convergence_special_safeguarded_alm}
		Suppose that \cref{ass:convexP} holds and \eqref{eq:P} possesses a stationary minimizer.
		Let $\{(x^k,z^k,y^k,\hat y^k)\}$ be a sequence generated by 
		\cref{alg:ALMsafeguarded_growing}
		such that \eqref{eq:summability} holds.
		Then $\{y^k\}$ converges to a multiplier in $\multYY$.
		Furthermore, we have $c(x^k)-z^k\to 0$ and \eqref{eq:upper_limit_optimal}.
		Particularly, each accumulation point $\bar x\in\XX$ of $\{x^k\}$ is a minimizer of \eqref{eq:P}.
		For each subsequence $\{x^k\}_{k\in K}$ such that $x^k\to_K\bar x$,
		we have $z^k\to_K c(\bar x)$ and $f(x^k)+g(z^k)\to_K\Phi^\star$.
\end{theorem}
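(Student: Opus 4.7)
The plan is to distinguish whether \cref{step:ALMsafeguarded_growing:change_mu} of \cref{alg:ALMsafeguarded_growing} is executed only finitely often (Case~A) or infinitely often (Case~B). In Case~A there exist $K_0\in\N$, $\bar\mu>0$, and $\bar\rho>0$ such that $\mu_k=\bar\mu$, $\rho_k=\bar\rho$, and $V^k\leq\theta V^{k-1}$ for all $k\geq K_0$, so $V^k\to 0$ geometrically and hence $c(x^k)-z^k\to 0$. In Case~B, the constraint $\beta\in(\eta^2,\eta)$ enforces the geometric contractions $\mu_{k+1}\rho_{k+1}=\eta\mu_k\rho_k$ and $\mu_{k+1}\rho_{k+1}^2=(\eta^2/\beta)\mu_k\rho_k^2$ at every failure, whence $\mu_k\rho_k\to 0$ and $\mu_k\rho_k^2\to 0$; by compactness of $\safeYY$ this gives $\mu_k\norm{\hat y^k}\to 0$ and $\mu_k\norm{\hat y^k}^2\to 0$.

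In Case~A, from index $K_0$ onward the scheme coincides with \cref{alg:ALMsafeguarded} with rigid safeguard $\bar\rho\safeYY$ and constant penalty $\bar\mu$, so \cref{thm:global_convergence_safeguarded_alm_dual}\,\ref{item:penalty_parameter_uniformly_positive} yields $y^k\to\bar y$ for some minimizer $\bar y$ of \eqref{eq:penalized_dual}. Constancy of $\mu_k$ together with $c(x^k)-z^k\to 0$ gives $y^{k+1}-\hat y^k\to 0$, and continuity of $\proj_{\bar\rho\safeYY}$ then forces $\bar y=\proj_{\bar\rho\safeYY}(\bar y)\in\bar\rho\safeYY$; this collapses the optimality condition of \eqref{eq:penalized_dual} to $0\in\partial(-\dualLL)(\bar y)$, so $\bar y\in\multYY$ by \cref{cor:solution_set_dual_problem}.

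In Case~B, fix any $\bar y\in\multYY$; since $\rho_k\uparrow\infty$ and $\safeYY$ is chosen in the spirit of \cref{rem:safeguarding_set} so that $\multYY$ is contained in the cone generated by $\safeYY$, we have $\bar y\in\rho_k\safeYY$ for all large $k$. Then nonexpansiveness of $\proj_{\rho_k\safeYY}$ gives $\norm{\hat y^k-\bar y}\leq\norm{y^k-\bar y}$, and, since $\bar y$ is a fixed point of $\prox_{-(1/\mu_k)\dualLL}$, \cref{thm:inexactPPA:saddle} yields $\norm{y^{k+1}-\bar y}\leq\sqrt{2\varepsilon_k/\mu_k}+\norm{y^k-\bar y}$ eventually; the telescoping argument from the proof of \cref{lem:PPA} combined with summability \eqref{eq:summability} implies convergence of $\{\norm{y^k-\bar y}\}$, hence boundedness of $\{y^k\}$. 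To identify accumulation points I pattern the proof of \cref{lem:dual_sequences_safeguarded}: from $\mu_k(\hat y^k-y^{k+1}_{\rm e})\in\partial(-\dualLL)(y^{k+1}_{\rm e})$, the three-point inequality gives $\norm{\hat y^k-y^{k+1}_{\rm e}}\leq\norm{\hat y^k-\bar y}$, whence $\mu_k\norm{\hat y^k-y^{k+1}_{\rm e}}\leq\mu_k(\norm{\hat y^k}+\norm{\bar y})\to 0$. Passing to the limit along any subsequence $y^{k+1}\to_K\tilde y$ (hence $y^{k+1}_{\rm e}\to_K\tilde y$ via summability) and invoking closedness of $\partial(-\dualLL)$ produces $0\in\partial(-\dualLL)(\tilde y)$, so $\tilde y\in\multYY$ by \cref{lem:solutions_of_dual_are_multipliers}. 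An application of \cref{lem:Banerts_lemma} with $Y\coloneqq\multYY$ then yields $y^k\to\bar y^*\in\multYY$.

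Once dual convergence is in hand, the primal statements follow uniformly. In Case~A, $\hat y^k\to\bar y^*$ by continuity of the projection; in Case~B, $\bar y^*\in\rho_k\safeYY$ eventually forces $\hat y^k=y^k\to\bar y^*$ ultimately. Thus $c(x^k)-z^k=\mu_k(y^{k+1}-\hat y^k)\to 0$ by boundedness of $\{\mu_k\}$, and combining \eqref{eq:subproblem_solution:safeguarded} with \cite[Lem.\ 3.2]{DeMarchiMehlitz2024} gives $f(x^k)+g(z^k)+\innprod{\hat y^k}{c(x^k)-z^k}\leq\Phi^\star+\varepsilon_k$; taking upper limits yields \eqref{eq:upper_limit_optimal}, and the assertions on primal accumulation points follow from lower semicontinuity of $g$ exactly as in the proof of \cref{thm:global_convergence}. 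The main obstacle is Case~B: the calibrated decay of $\mu_k$ against the growing $\rho_k$ must still leave $\{y^k\}$ bounded, which is precisely what the requirement $\beta\in(\eta^2,\eta)$ purchases via $\mu_k\rho_k\to 0$ and $\mu_k\rho_k^2\to 0$.
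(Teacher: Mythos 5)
Your proof is correct and follows essentially the same route as the paper's: a case split according to whether \cref{step:ALMsafeguarded_growing:change_mu} fires finitely or infinitely often (equivalently, $\mu_k\downto\mu>0$ or $\mu_k\downtoneq 0$), a quasi-Fej\'er inequality $\norm{y^{k+1}-\bar y}\leq\sqrt{2\varepsilon_k/\mu_k}+\norm{y^k-\bar y}$ against multipliers combined with \cref{lem:Banerts_lemma}, identification of accumulation points through the inclusion $\mu_k(\hat y^k-y^{k+1}_{\rm e})\in\partial(-\dualLL)(y^{k+1}_{\rm e})$, and the standard primal argument via \eqref{eq:subproblem_solution:safeguarded}. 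Three remarks. First, in Case~A you delegate to \cref{thm:global_convergence_safeguarded_alm_dual}\,\ref{item:penalty_parameter_uniformly_positive} applied to the tail with safeguard $\bar\rho\safeYY$ and then upgrade the limit from a minimizer of \eqref{eq:penalized_dual} to a multiplier using $y^{k+1}-\hat y^k\to 0$ and continuity of the projection; the paper instead shows directly that all accumulation points lie in $\multYY\cap(\rho\safeYY)$ and runs the Fej\'er argument against that set. Your shortcut is legitimate and slightly more economical. Second, in Case~B your explicit proviso that $\multYY$ meets the cone generated by $\safeYY$ is precisely the assumption the paper also uses, silently, when it asserts $\bar y\in\rho_k\safeYY$ for large $k$ and $\hat y^k=y^k$ eventually; so you are not importing anything beyond what the paper's own argument needs. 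Third, your closing sentence misattributes the boundedness of $\{y^k\}$ to the condition $\beta\in(\eta^2,\eta)$ via $\mu_k\rho_k^2\to 0$: that property, i.e., \eqref{eq:elastic_safeguard_requirement_irreg}, is not used in the regular case at all (the paper notes this explicitly after the theorem), and boundedness of $\{y^k\}$ comes from the quasi-Fej\'er inequality; only $\mu_k\rho_k\to 0$, hence $\mu_k\hat y^k\to 0$, is actually needed.
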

\end{mybox}
\begin{proof}
	As in the proof of \cref{lem:dual_sequences_safeguarded},
	we make use of the exact step $y^{k+1}_{\rm e}$ from \eqref{eq:definition_of_exact_prox}
	for each $k\in\N_0$.
	Following the reasoning in in the proof of \cref{lem:dual_sequences_safeguarded},
	for each $k\in\N_0$, $y^{k+1}\in\prox_{-\nicefrac1{\mu_k}\dualLL}^{\varepsilon_k}(\hat y^k)$
	guarantees validity of
	\eqref{eq:distance_to_exact_prox} and
	\eqref{eq:approximate_optimality_condition}.
	We distinguish two cases.
	
	$\triangleright$ Assume that $\mu_k\downto \mu$ holds for some $\mu>0$.
	Then, by construction of \cref{alg:ALMsafeguarded_growing} (and specifically the adoption of \cref{set:algencan_penalty_parameter}), 
	there are a finite $\rho>0$ and some $K_0\in\N$ such that $\mu_k=\mu$ and $\rho_k=\rho$ hold for all $k\geq K_0$.
	Furthermore, as $\{\mu_k\}$ is bounded away from zero, we find $c(x^k)-z^k\to 0$,
	which implies $y^{k+1}-\hat y^k\to 0$.
	\\
	Let $\{y^{k+1}\}_{k\in K}$ be a convergent subsequence of $\{y^k\}$ (which we show to exist below) with limit $\tilde y\in\YY$.
	Then we also have $y^{k+1}_{\rm e}\to_K\tilde y$ from \eqref{eq:distance_to_exact_prox}, 
	and $y^{k+1}_{\rm e}-y^{k+1}\to 0$ follows by $\mu_k=\mu>0$ for all $k\geq K_0$.
	Hence, $y^{k+1}_{\rm e}-\hat y^k\to 0$ holds,
	and taking the limit $k\to_K\infty$ in \eqref{eq:approximate_optimality_condition}
	yields $0\in\partial(-\dualLL)(\tilde y)$, see \cite[Thm~24.4]{Rockafellar1970} again,
	i.e., $\tilde y$ is a stationary point of $-\dualLL$ and, thus, a solution of \eqref{eq:PPA:dual_problem}.
	Particularly, $\tilde y\in\multYY$ follows by \cref{cor:solution_set_dual_problem}.
	\\
	On the one hand, we note that $\{\hat y^k\}_{k\in K}\subseteq\rho \safeYY$ holds.
	On the other hand, $y^{k+1}-\hat y^k\to 0$ and $y^{k+1}\to_K\tilde y$
	yield $\hat y^k\to_K\tilde y$.
	Closedness of $\rho \safeYY$ guarantees $\tilde y\in\rho \safeYY$.
	Hence, we have shown that each accumulation point of $\{y^k\}$ belongs to $\multYY\cap(\rho \safeYY)$.
	\\
	Let us argue that $\{y^k\}$ is bounded in the present situation,
	which yields the existence of a convergent subsequence and, thus,
	that $\multYY\cap(\rho \safeYY)$ is nonempty.
	Indeed, as we have $\{\hat y^k\}\subseteq \rho \safeYY$, $\{\hat y^k\}$ is bounded.
	Recalling $y^{k+1}-\hat y^k\to 0$,
	$\{y^k\}$ must also be bounded.
	\\
	Pick any $k\in\N$ such that $k\geq K_0$.
	For each $\bar{y}_\rho\in \multYY\cap(\rho \safeYY)$, we have
	\begin{align*}
		\norm{y^{k+1}-\bar{y}_\rho}
		\leq{}&
		\norm{y^{k+1}-y^{k+1}_{\rm e}} + \norm{y^{k+1}_{\rm e}-\bar{y}_\rho}
		\\
		\leq{}&
		\sqrt{2\varepsilon_k/\mu} + \norm{\prox_{-\nicefrac1{\mu}\dualLL}(\hat y^k)-\prox_{-\nicefrac1{\mu}\dualLL}(\bar{y}_\rho)}
		\\
		\leq{}&
		\sqrt{2\varepsilon_k/\mu} + \norm{\hat y^k-\bar{y}_\rho}
		=
		\sqrt{2\varepsilon_k/\mu} + \norm{\proj_{\rho \safeYY}(y^k)-\proj_{\rho \safeYY}(\bar{y}_\rho)}
		\\
		\leq{}&
		\sqrt{2\varepsilon_k/\mu} + \norm{y^k-\bar{y}_\rho}
	\end{align*}
	by \eqref{eq:distance_to_exact_prox} and nonexpansiveness of the proximal operator.
	As in the proof of \cref{lem:PPA},
	we combine this estimate with the inexactness requirement \eqref{eq:summability} in order to find 
	the convergence of $\{y^k\}$ to a point in $\multYY\cap(\rho \safeYY)$ through \cref{lem:Banerts_lemma}.
	\\
	Finally, retracing the proof of \cref{cor:global_convergence_safeguarded_alm_primal},
	\eqref{eq:upper_limit_optimal} and the remaining claims about primal accumulation points 
	follow from the boundedness of $\{\hat y^k\}$ that we mentioned earlier.
	
	$\triangleright$ Next, we consider the situation where $\mu_k\downtoneq 0$.
	By construction of \cref{alg:ALMsafeguarded_growing}, we have $\rho_k\to\infty$ and $\mu_k\hat y^k\to 0$.
	Choosing a convergent subsequence $\{y^{k+1}\}_{k\in K}$ with limit $\tilde y\in\YY$,
	we have $y^{k+1}_{\rm e}\to_K\tilde y$ from \eqref{eq:distance_to_exact_prox},
	and taking the limit $k\to_K\infty$ in \eqref{eq:approximate_optimality_condition}
	while respecting \cite[Thm~24.4]{Rockafellar1970}
	yields $0\in\partial(-\dualLL)(\tilde y)$ as $\{y^{k+1}_{\rm e}\}_{k\in K}$ is bounded.
	Hence, $\tilde y$ is a solution of \eqref{eq:PPA:dual_problem} and, thus, belongs to $\multYY$,
	see \cref{cor:solution_set_dual_problem} again.
	\\
	Pick $\bar{y} \in \multYY$ arbitrarily.
	As we have $\rho_k\to\infty$, $\bar{y}\in \rho_k \safeYY$ holds for sufficiently large $k\in\N$.
	For any such $k\in\N$, by \eqref{eq:distance_to_exact_prox} we find
	\begin{align*}
		\norm{y^{k+1}-\bar{y}}
		\leq{}&
		\norm{y^{k+1}-y^{k+1}_{\rm e}} + \norm{y^{k+1}_{\rm e}-\bar{y}}
		\\
		\leq{}&
		\sqrt{2\varepsilon_k/\mu_k} + \norm{\prox_{-\nicefrac1{\mu_k}\dualLL}(\hat y^k)-\prox_{-\nicefrac1{\mu_k}\dualLL}(\bar{y})}
		\\
		\leq{}&
		\sqrt{2\varepsilon_k/\mu_k} + \norm{\hat y^k-\bar{y}}
		=
		\sqrt{2\varepsilon_k/\mu_k} + \norm{\proj_{\rho_k \safeYY}(y^k)-\proj_{\rho_k \safeYY}(\bar{y})}
		\\
		\leq{}&
		\sqrt{2\varepsilon_k/\mu_k} + \norm{y^k-\bar{y}},
	\end{align*}
	and the convergence of $\{y^k\}$ to a point in $\multYY$ follows as above, relying on \cref{lem:PPA,lem:Banerts_lemma}.
	\\
	Finally, let $\bar y\in \multYY$ be the limit of $\{y^k\}$.
	As $\{y^k\}$ is bounded while $\rho_k\to\infty$ holds,
	we know that $\hat y^k=y^k$ holds for all sufficiently large $k\in\N$.
	Hence, we find $\hat y^k\to \bar y$ which guarantees that $\{\hat y^k\}$ is bounded.
	Consequently, we have $\mu_k(y^{k+1}-\hat y^k)\to 0$.
	By construction of \cref{alg:ALMsafeguarded_growing}, this gives $c(x^k)-z^k\to 0$.
	Finally, \eqref{eq:upper_limit_optimal} and the remaining claims about
	primal accumulation points follow as in the proof of
	\cref{cor:global_convergence_safeguarded_alm_primal} since $\{\hat y^k\}$ is bounded.
\end{proof}

Let us emphasize that property \eqref{eq:elastic_safeguard_requirement_irreg}
has not been used in the proof of \cref{thm:global_convergence_special_safeguarded_alm},
i.e., it is not necessary to claim $\eta^2<\beta$ in \cref{alg:ALMsafeguarded_growing}
if problem \eqref{eq:P} is known to possess stationary minimizers.

To close the section, let us inspect the irregular case.
The following is a counterpart of \cref{thm:global_convergence_irregular_safeguarded} 
for the special \cref{alg:ALMsafeguarded_growing}, which implements \cref{set:algencan_penalty_parameter}.
Thanks to the elastic safeguarding in \cref{alg:ALMsafeguarded_growing},
the two cases appearing in \cref{thm:global_convergence_irregular_safeguarded} can be merged.

\begin{mybox}
\begin{theorem}\label{thm:global_convergence_special_safeguarded_alm_irregular}
		Suppose that \cref{ass:convexP} holds.
		Furthermore, let all minimizers of \eqref{eq:P} be nonstationary,
		and let the value function $\mathcal V$ from \eqref{eq:value_function}
		be proper and lsc.
		Let $\{(x^k,z^k,y^k,\hat y^k)\}$ be a sequence generated by 
		\cref{alg:ALMsafeguarded_growing}
		such that \eqref{eq:summability} holds.
		Then we have $\norm{y^k}\to\infty$ and \eqref{eq:upper_limit_optimal}.
		Particularly, each accumulation point $\bar x\in\XX$ of $\{x^k\}$ is a minimizer of \eqref{eq:P}.
		For each subsequence $\{x^k\}_{k\in K}$ such that $x^k\to_K\bar x$,
		we have $z^k\to_K c(\bar x)$ and $f(x^k)+g(z^k)\to_K\Phi^\star$.
\end{theorem}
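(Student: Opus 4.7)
The plan is to adapt the two-case structure from \cref{thm:global_convergence_special_safeguarded_alm} to the irregular setting, using the absence of multipliers together with \cref{lem:solutions_of_dual_are_multipliers} to drive contradictions whenever an accumulation point of $\{y^k\}$ can be pinned down. The core technical device is the approximate PPA inclusion $\mu_k(\hat y^k - y^{k+1}_{\rm e}) \in \partial(-\dualLL)(y^{k+1}_{\rm e})$, with $y^{k+1}_{\rm e}$ the exact proximal step as in \eqref{eq:definition_of_exact_prox}, together with the inexactness bound \eqref{eq:distance_to_exact_prox} and the closed-graph continuity of the convex subdifferential \cite[Thm~24.4]{Rockafellar1970}.

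First I would rule out the subcase $\mu_k \downto \mu > 0$. By the update rule of \cref{set:algencan_penalty_parameter} built into \cref{alg:ALMsafeguarded_growing}, stabilization of $\mu_k$ forces stabilization of $\rho_k$ at some $\rho > 0$ and the test $V^k \leq \theta V^{k-1}$ to eventually succeed at every iteration, yielding $V^k \to 0$. This implies $c(x^k) - z^k \to 0$ and, since $\{\hat y^k\} \subseteq \rho \safeYY$, the boundedness of $\{y^k\}$. Along any subsequence with $y^{k+1} \to_K \tilde y$, the inexactness bound yields $y^{k+1}_{\rm e} \to_K \tilde y$, and also $\hat y^k \to_K \tilde y$ because $\hat y^k - y^{k+1} \to 0$. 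Closedness of $\rho \safeYY$ places $\tilde y \in \rho \safeYY$, so $\proj_{\rho \safeYY}(\tilde y) = \tilde y$ and the subdifferential input $\mu_k(\hat y^k - y^{k+1}_{\rm e})$ tends to $0$. Passing to the limit in the subdifferential inclusion thus delivers $0 \in \partial(-\dualLL)(\tilde y)$, so \cref{lem:solutions_of_dual_are_multipliers} places $\tilde y \in \multYY$, contradicting the irregularity assumption.

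Hence $\mu_k \downtoneq 0$, and the elastic design requirement $\beta > \eta^2$ secures $\mu_k \rho_k \to 0$ together with $\mu_k \rho_k^2 \to 0$, so in particular $\mu_k \hat y^k \to 0$ and \eqref{eq:elastic_safeguard_requirement_irreg} hold. The primal assertions—\eqref{eq:upper_limit_optimal}, minimality of each accumulation point of $\{x^k\}$, and the subsequential convergences $z^k \to_K c(\bar x)$ and $f(x^k) + g(z^k) \to_K \Phi^\star$—then follow exactly as in \cref{thm:global_convergence_irregular_safeguarded_primal}, which leans on \cite[Thm~4.3]{DeMarchiMehlitz2024}; the hypothesis that $\dom g$ be closed used therein is automatic in our convex setting by \cite[Prop.\ 4\,(c)]{FriedlanderGoodwinHoheisel2023}.

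Finally, for $\norm{y^k} \to \infty$, I would again argue by contradiction. If $\{y^{k+1}\}_{k \in K}$ is bounded with limit $\tilde y$, then $y^{k+1}_{\rm e} \to_K \tilde y$ via \eqref{eq:distance_to_exact_prox}, while $\mu_k \hat y^k \to 0$ by the elastic safeguard and $\mu_k y^{k+1}_{\rm e} \to 0$ since a bounded subsequence is multiplied by the null sequence $\{\mu_k\}$; hence $\mu_k(\hat y^k - y^{k+1}_{\rm e}) \to 0$, and closed-graph continuity gives $0 \in \partial(-\dualLL)(\tilde y)$. \cref{lem:solutions_of_dual_are_multipliers} then places $\tilde y \in \multYY$, again contradicting irregularity. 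The main obstacle is precisely the ruling-out of $\mu_k \downto \mu > 0$: the key observation is that under stabilization, $\hat y^k$ and $y^{k+1}$ share a common limit (thanks to $V^k \to 0$ and $\mu_k \geq \mu$), which pushes the would-be accumulation point $\tilde y$ into the safeguarding set and nullifies the residual ``penalty'' term $\mu(\proj_{\rho \safeYY}(\tilde y) - \tilde y)$ which would otherwise appear in the limit, as it does in \cref{lem:dual_sequences_safeguarded}.
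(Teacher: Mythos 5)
Your proposal is correct in substance and uses the paper's core machinery — the inclusion $\mu_k(\hat y^k-y^{k+1}_{\rm e})\in\partial(-\dualLL)(y^{k+1}_{\rm e})$, the bound \eqref{eq:distance_to_exact_prox}, closedness of the subdifferential graph, and \cref{lem:solutions_of_dual_are_multipliers} to turn any dual accumulation point into a contradiction with irregularity — but it is organized differently. The paper first proves $\norm{y^k}\to\infty$ unconditionally and then treats the cases $\mu_k\downtoneq 0$ and $\mu_k\downto\mu>0$ on an equal footing, proving the primal claims directly in each. You instead show that $\mu_k\downto\mu>0$ is outright \emph{impossible} under irregularity: stabilization of $\mu_k$ forces $V^k\to 0$, hence a bounded dual sequence whose accumulation points must lie in $\multYY$. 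That argument is correct and in fact sharper than what the paper records (its own $\mu>0$ branch is vacuous for exactly this reason, though this is never stated), so your decomposition is legitimate and arguably cleaner. One point needs care: you claim the primal assertions follow ``exactly as in'' \cref{thm:global_convergence_irregular_safeguarded_primal}, but that proposition concerns the rigid safeguard, where $\{\hat y^k\}\subseteq\safeYY$ is bounded; here $\rho_k\to\infty$ can make $\{\hat y^k\}$ unbounded, so the cited result does not apply verbatim and the argument must be redone with the substitutes $\mu_k\hat y^k\to 0$ and \eqref{eq:elastic_safeguard_requirement_irreg} that you correctly isolate. Concretely, one writes $f(x^k)+g(z^k)+\tfrac{\mu_k}{2}\bigl(\norm{y^{k+1}}^2-\norm{\hat y^k}^2\bigr)=\LL_{\mu_k}(x^k,\hat y^k)\leq\Phi^\star+\varepsilon_k$, uses $\mu_k\norm{\hat y^k}^2\to 0$ to see the bracketed term has nonnegative lower limit so that \eqref{eq:upper_limit_optimal} follows, and then uses $\mu_k\hat y^k\to 0$ together with the proximal characterization of $z^k$ to obtain $c(x^k)-z^k\to_K 0$ along convergent primal subsequences. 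This is what the paper spells out and what your deferral elides; with it filled in, your plan goes through.
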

\end{mybox}
\begin{proof}
	From the proof of \cref{thm:global_convergence_special_safeguarded_alm}
	we observe that any accumulation point of $\{y^k\}$ is a minimizer of
	$-\dualLL$ and, thus, an element of $\multYY$, see \cref{lem:solutions_of_dual_are_multipliers}, 
	which is assumed to be empty.
	Hence, $\{y^k\}$ does not possess a convergent subsequence, and $\norm{y^k}\to\infty$ follows.
	We proceed by distinguishing two cases.
	
	$\triangleright$ Assume that $\mu_k\downarrow 0$.
	Relation \eqref{eq:bound_auglag_optivalue:safeguarded} gives
	\begin{multline*}
		f(x^k)+g(z^k)+\frac{\mu_k}{2}\left( \norm{y^{k+1}}^2-\norm{\hat{y}^k}^2 \right)
		=
		f(x^k)+g(z^k)+\frac{1}{2\mu_k}\norm{c(x^k)+\mu_k \hat{y}^k-z^k}^2-\frac{\mu_k}{2}\norm{\hat{y}^k}^2
		\\
		=
		\LL_{\mu_k}(x^k,\hat{y}^k)
		\leq
		\Phi^\star+\varepsilon_k.
	\end{multline*}
	From \eqref{eq:elastic_safeguard_requirement_irreg} we know that $\mu_k\norm{\hat y^k}^2 \to 0$,
	and the latter yields
	\[
		\liminf_{k\to\infty}\frac{\mu_k}{2}\left( \norm{y^{k+1}}^2-\norm{\hat{y}^k}^2 \right)\geq 0.
	\]
	Hence, taking the upper limit $k\to\infty$ in the above estimate, we find \eqref{eq:upper_limit_optimal}.
	
	Let us pick an arbitrary accumulation point $\bar{x}$ of $\{x^k\}$,
	and let $K\subseteq\N$ be such that $x^k\to_K\bar x$.
	To show the final claims,
	one can proceed as in the proof of \cref{thm:global_convergence}
	provided that we have $c(x^k)-z^k\to_K 0$.
	First, we exploit $\mu_k\hat y^k\to 0$, which holds by construction of \cref{alg:ALMsafeguarded_growing}, 
	and \cite[Prop.\ 4\,(c)]{FriedlanderGoodwinHoheisel2023}
	to find
	\[
		\mu_k g(z^k) + \frac12\norm{z^k-c(x^k)-\mu_k\hat y^k}^2 \to_K 0.
	\]
	Second, applying \cite[Lem.\ 2.2]{DeMarchiMehlitz2024} 
	yields $\norm{z^k-c(x^k)-\mu_k\hat y^k}\to_K 0$.
	Using $\mu_k\hat y^k\to 0$ once again, $c(x^k)-z^k\to_K 0$ follows.
	
	$\triangleright$ Let us assume that $\mu_k\downto \mu$ for some $\mu>0$.
	To start, we note that, due to the update mechanism at \cref{step:ALMsafeguarded_growing:change_mu,step:ALMsafeguarded_growing:check_V} 
	of \cref{alg:ALMsafeguarded_growing}, $V^k\downto 0$ follows
	which yields $c(x^k)-z^k\to 0$ and that $\{\rho_k\}$ and, thus, $\{\hat y^k\}$ remain bounded.
	Hence, validity of \eqref{eq:upper_limit_optimal} can be shown as in \cref{cor:global_convergence_safeguarded_alm_primal},
	and the final claims follow as in \cref{thm:global_convergence}.
\end{proof}

Let us note that, in the setting of \cref{thm:global_convergence_special_safeguarded_alm_irregular},
we even have $c(x^k)-z^k\to 0$ if $\{\mu_k\}$ remains bounded away from zero (as mentioned in the proof).
To obtain $c(x^k)-z^k\to 0$ also in the case $\mu_k\downtoneq 0$,
due to $\mu_k\hat y^k\to 0$, it would be necessary to verify $\mu_k y^{k+1}\to 0$,
but we are not aware of a proof or a counterexample for this property.
Note, however, that in 
\cref{thm:global_convergence_irregular_safeguarded,thm:global_convergence_irregular_safeguarded_primal},
we also do not obtain the full convergence of $\{c(x^k)-z^k\}$ in general.
Apart from this minor drawback, in the considered irregular case,
\cref{thm:global_convergence_special_safeguarded_alm_irregular} 
provides significantly better global convergence properties for \cref{alg:ALMsafeguarded_growing}
than \cref{thm:global_convergence_irregular} does for \cref{alg:ALMclassic}.

\section{Illustrative examples}\label{sec:summary_and_numerics}

In this section, we illustrate our findings by means of three example problems,
comparing different safeguarding techniques and update rules for the penalty parameter.

\paragraph*{Problems}
Three one-dimensional toy problems are considered to illustrate the behavior of solvers under different circumstances.
All of them possess a single inequality constraint and, thus, can be modeled as a convex composite problem
of type \eqref{eq:P} such that the indicator function of the nonpositive real line plays the role of $g$.
\begin{itemize}
	\item
	\emph{Regular example}:
	A convex problem with the unique, stationary minimizer $\bar{x} \coloneqq 0$
	and associated multiplier $\bar{y} \coloneqq 1$:
	\begin{equation}\label{problem:cvx_reg}
		\minimize_{x} ~ x \quad\stt\quad x^2-x \leq 0 .
	\end{equation}
	\item
	\emph{Irregular example}:
	A convex problem with the unique, nonstationary minimizer $\bar{x} \coloneqq 0$, as observed in \cref{example:cvx_deg}:
	\begin{equation}\label{problem:cvx_deg}
		\minimize_{x} ~ x \quad\stt\quad x^2\leq 0 .
	\end{equation}
	\item
	\emph{Kanzow--Steck example}:
	A nonconvex formulation of a convex problem with the unique, stationary minimizer $\bar{x} \coloneqq 1$
	and associated multiplier $\bar{y} \coloneqq \nicefrac{1}{3}$,
	see \cref{rem:ks_example_nonconvex}:
	\begin{equation}\label{problem:ks_reg}
		\minimize_{x} ~ x \quad\stt\quad 1-x^3\leq 0 .
	\end{equation}
	Although \eqref{problem:ks_reg} does not comply with \cref{ass:convexP} due to its nonconvex formulation,
	we can inspect how different ALMs behave for this example.
\end{itemize}

\paragraph*{Setup}
We execute \cref{alg:ALMclassic,alg:ALMsafeguarded,alg:ALMsafeguarded_growing}
in different configurations.
First, the penalty update follows either \cref{set:constant_penalty_parameter} (\emph{fixed}) or
\cref{set:algencan_penalty_parameter} (\emph{adaptive}) with $\beta = \nicefrac{1}{2}$ and $\theta = \nicefrac{9}{10}$.
The initial penalty value is always $\mu_0 = 1$.
Second, the safeguarding mechanism can be omitted (\emph{none}), \emph{rigid}, or \emph{elastic}.
In case of \emph{elastic} safeguarding, we only consider the \emph{adaptive} update mechanism
for the penalty parameter as studied in \cref{sec:safeguarded_ALM_elastic}.
When required, we set $\safeYY = [-y_{\max},y_{\max}]$ with $y_{\max}\coloneqq \nicefrac{1}{10}$,
strictly smaller than the optimal multiplier for our examples,
and $\eta = \nicefrac{3}{5}$ for the increase of $\rho$.
The sequence of inner tolerances is generated according to
$\varepsilon_{k+1} = \max\{\epsilon,\kappa_\varepsilon\varepsilon_k\}$ 
with $\kappa_\varepsilon=\nicefrac{1}{2}$ and $\varepsilon_0 = 1$.
The primal-dual tolerance $\epsilon=10^{-9}$ defines also the termination conditions:
the algorithm is aborted when $\varepsilon_k\leq \epsilon$ and $|c(x^k)-z^k| \leq \epsilon$.
All runs are initialized with $x^0=0$ and $y^0=0$
and all inner subproblems to compute $x^k$ are warm-started at the previous point $x^{k-1}$
(relevant only for the nonconvex example \eqref{problem:ks_reg}).

The results obtained are illustrated by monitoring and reporting three quantities:
the primal residual $V^k \coloneqq | c(x^k)-z^k |$,
the penalty parameter $\mu_k$,
and the multiplier norm $|y^k|$
against the iteration number $k$.
The dual residual is omitted since it is directly controlled by $\varepsilon_k$ 
at \cref{step:ALMsafeguarded_growing:subproblem}.
We adopt a persistent pattern to depict the ALM configurations in the figures below:
penalty parameters can be fixed (\includegraphics{pics/dashed_line}) and adaptive (\underline{solid line});
safeguards can be none (\textcolor{color_none}{green line}), rigid (\textcolor{color_rigid}{orange line}), and elastic (\textcolor{color_elastic}{purple line}).

\paragraph*{Discussion}
The numerical results for the \emph{regular} example \eqref{problem:cvx_reg} are depicted in \cref{fig:cvx_reg} and confirm the theoretical observations 
in \cref{thm:global_convergence_special_safeguarded_alm,thm:global_convergence,thm:global_convergence_safeguarded_alm_dual}.
Particularly, notice that 
all variants converge to the solution $(\bar{x},\bar{y})$
except the one with fixed penalty parameter and rigid safeguard, 
because the selected safeguarding set $\safeYY$ is too small. 
Convergence can be rescued by decreasing the penalty parameter (adaptive penalty)
and/or by enlarging $\safeYY$ with $\rho \safeYY$ (elastic safeguard).

Analogous observations are valid for the \emph{irregular} example \eqref{problem:cvx_deg}, reported in \cref{fig:cvx_deg}.
All variants converge to the solution $\bar{x}$
except the one with fixed penalty parameter and rigid safeguard.

Finally, let us consider the \emph{Kanzow--Steck} example \eqref{problem:ks_reg}.
Despite the strong similarity with \eqref{problem:cvx_reg},
the convergence results in \cref{fig:cvx_reg,fig:ks_reg} are qualitatively different
for the variants with fixed penalty parameter.
The discrepancy is explained by the nonconvexity afflicting \eqref{problem:ks_reg},
which introduces spurious local minima in the subproblems.
When warm-starting the inner solver at $x^{k-1}$, variants with fixed penalty parameter generate iterates that remain negative, preventing to find feasible points.
Cold-starting at positive values unlocks convergence, and the assessment made for \cref{fig:cvx_reg} remains valid.
In particular, the variant with elastic safeguard converges, faster (and with less penalty updates) than with the rigid one.

\begin{figure}[tbh]
	\centering%
	\includegraphics{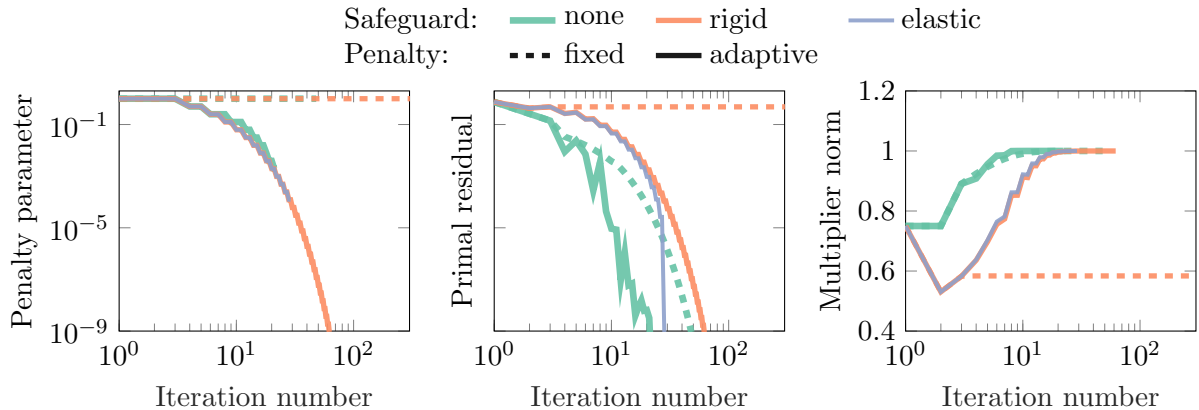}%
	\caption{%
		Regular example: Convex problem with stationary solution \eqref{problem:cvx_reg}.
		All variants converge to the primal-dual solution $(\bar{x},\bar{y})$
		except the one with fixed safeguard and fixed penalty parameter, 
		because the selected safeguarding set $\safeYY$ is too small (middle and right panels).
		The \emph{elastic} variant displays two regimes: 
		first, it behaves as with rigid safeguard when parameter $\rho_k$ is still small, 
		then it switches to a faster convergence 
		as for the classical scheme without safeguard (middle panel). 
		Such transition mitigates the number of penalty parameter updates (left panel).
	}%
	\label{fig:cvx_reg}%
\end{figure}

\begin{figure}[tbh]
	\centering%
	\includegraphics{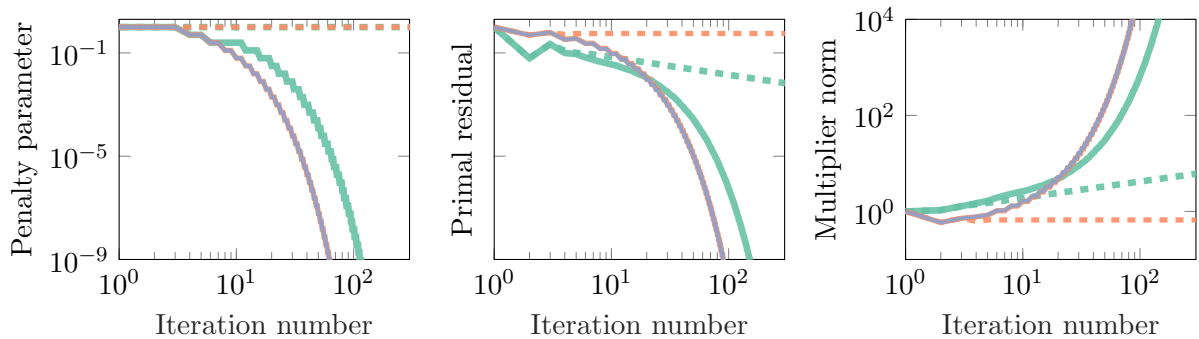}%
	\caption{%
		Irregular example: Convex problem without stationary solution \eqref{problem:cvx_deg}. 
		Legend as in \cref{fig:cvx_reg}.
		All variants converge to the solution $\bar{x}$
		except the one with fixed penalty parameter and rigid safeguard (middle panel).
		All variants with adaptive penalty quickly converge to the solution,
		with $\mu_k\downtoneq 0$ (left panel).
		The classical variant with fixed penalty parameter slowly converges, thanks to $|y^k|\to\infty$ (right panel).
	}%
	\label{fig:cvx_deg}%
\end{figure}

\begin{figure}[tbh]
	\centering%
	\includegraphics{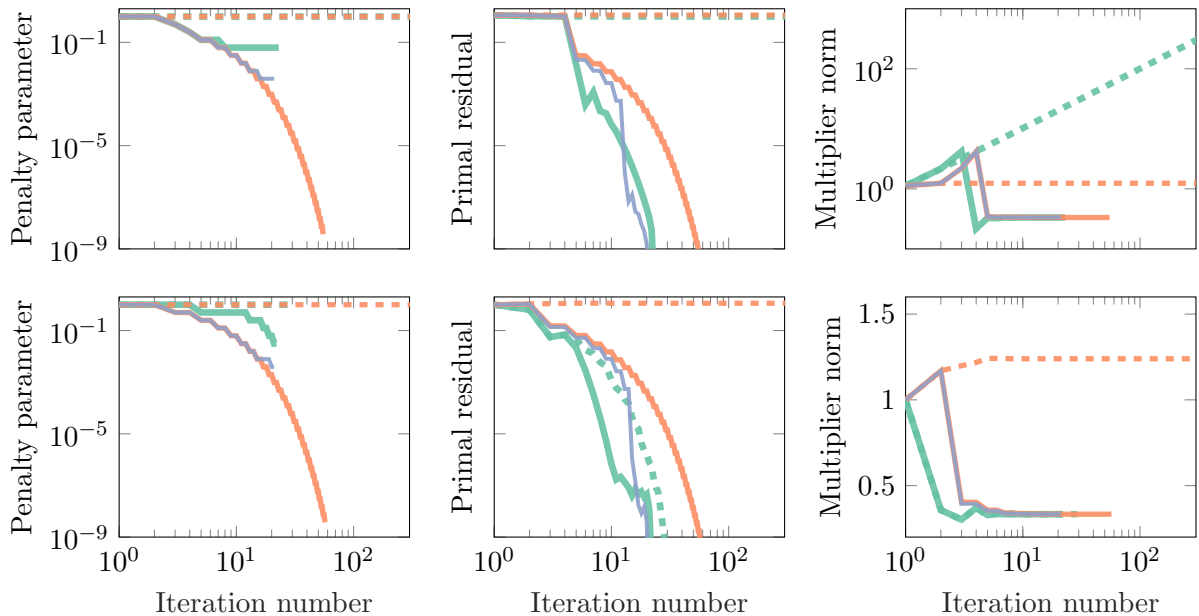}%
	\caption{%
		Kanzow--Steck example: 
		Nonconvex formulation of convex problem with stationary solution \eqref{problem:ks_reg}.
		Legend as in \cref{fig:cvx_reg}.
		The nonconvex subproblem is warm-started at $x^{k-1}$ (top panels) or cold-started at $1$ (bottom panels).
		When warm-started, all variants with adaptive penalty converge to the solution, and
		only the one with rigid safeguard exhibits $\mu_k\downtoneq 0$,
		while variants with fixed penalty parameter fail to converge,
		as they generate primal iterates $x^k$ approaching zero from below.
		When cold-started, the behaviors observed on the \emph{regular} example are resumed.
	}%
	\label{fig:ks_reg}%
\end{figure}

\section{Final remarks and open questions}
\label{sec:conclusions}

In this paper, we illustrated that the correspondence between ALM and PPA, 
well known for inequality-constrained convex problems, 
extends to fully convex composite optimization problems.
The latter has been used to conduct a global convergence analysis for
the classical ALM, a rigidly safeguarded ALM, and a novel elastically safeguarded ALM
both in the regular setting, where minimizers are assumed to be stationary,
and in the irregular situation, where all minimizers are nonstationary.
The ALM with elastic safeguard turned out to consolidate the advantages of both the classical and rigidly
safeguarded version while effectively extenuating their respective disadvantages.
It might be interesting to explore whether similar results can be obtained
for (safeguarded) proximal ALMs, see \cite{rockafellar2024generalizations}.

In our future research, we aim to illustrate the efficiency of elastic safeguarding
for general composite optimization problems. As mentioned in \cref{sec:safeguarded_ALM_elastic},
global convergence properties of associated ALMs should be easy to obtain simply
by paralleling results and proofs available for rigidly safeguarded ALMs.
However, we claim that elastic safeguarding may also be beneficial for a local analysis
(guaranteeing local fast convergence to a primal-dual pair).
Here, for ALMs with rigid safeguard, one typically has to assume (or ensure by comparatively
hard assumptions) that the safeguarding becomes unnecessary for large enough iterations,
i.e., that the dual variable at the limit is in the safeguarding set,
see, e.g., \cite[\S~4.3]{DeMarchiMehlitz2024} and \cite[\S~4.3]{steck2018dissertation}. 
In contrast, it is likely that such an assumption is not required
when elastic safeguarding is used.

\phantomsection
\addcontentsline{toc}{section}{References}%
{\small\bibliographystyle{habbrv}
\bibliography{biblio}}

\appendix
\section{Recession functions of Moreau envelopes}
\label{sec:appendix}

Here, we are going to present a proof of \cref{lem:recession_function_of_moreau_envelope}.
Therefore, we have to introduce some additional notation.

For proper, lsc, convex functions $\func{\varphi}{\YY}{\Rinf}$ and $\func{\psi}{\YY}{\Rinf}$,
we refer to $\func{\varphi\square\psi}{\YY}{\Rinf\cup\{\pm\infty\}}$ given by
\begin{equation*}
	(\varphi\square\psi)(z)
	\coloneqq
	\inf_w\{\varphi(w)+\psi(z-w)\,|\,w\in\YY\}
\end{equation*}
as the \emphdef{infimal convolution} of $\varphi$ and $\psi$.
Observe that $\varphi^\gamma = \varphi \square \nicefrac{1}{2\gamma}\norm{\cdot}^2$ holds for all $\gamma>0$.

For each $t>0$, the proper, lsc, convex function $t\epimult \varphi\colon\YY\to\Rinf$ is given by
\[
(t\epimult \varphi)(z)\coloneqq t\,\varphi(z/t),
\]
and the operation $\epimult$ is referred to as \emphdef{epi-multiplication} in the literature
due to $\epi (t\epimult\varphi)=t\,\epi\varphi$.

A sequence of functions $\{\varphi_k\}$ such that $\func{\varphi_k}{\YY}{\Rinf}$ for each $k\in\N$ is said to be \emphdef{epi-convergent} to
$\varphi$ if, for each $y\in\YY$, there exists a sequence $\{\bar y_k\}\subseteq \YY$ converging to $y$ such that
$
	\limsup_{k\to\infty} \varphi_k(\bar y_k)\leq \varphi(y)
$,
and for each sequence $\{y_k\}\subseteq\YY$ converging to $y$,
$
	\liminf_{k\to\infty} \varphi_k(y_k)\geq \varphi(y)
$
is guaranteed. The epi-convergence of $\{\varphi_k\}$ to $\varphi$ will be denoted by $\varphi_k\epito \varphi$.
In the case where $\{\varphi_t\}_{t>0}$ is a family of functions such that $\func{\varphi_t}{\YY}{\Rinf}$ for each $t>0$,
we say that $\{\varphi_t\}_{t>0}$ epi-converges to $\varphi$ as $t\downtoneq 0$ if, for each null sequence $\{t_k\}\subset(0,\infty)$, 
$\varphi_{k}\epito\varphi$ holds for $\{\varphi_k\}$ given by $\varphi_k\coloneqq \varphi_{t_k}$ for each $k\in\N$,
and we use $\varphi_t\epito\varphi$ to denote this property.

\begin{proof}[Proof of \cref{lem:recession_function_of_moreau_envelope}]
	Throughout the proof, let $\gamma>0$ be arbitrarily chosen.
	For each $t>0$ and $z\in\YY$, we find
	\begin{multline*}
		(t\epimult \varphi^\gamma)(z)
		={}
		t\,\inf_{w}\left\{ \varphi(w)+\frac{1}{2\gamma}\norm{w-z/t}^2 \right\}
		={}
		\inf_{w}\left\{ t\,\varphi(tw/t)+\frac{1}{2t\gamma}\norm{tw-z}^2 \right\}
		\\
		={}
		\inf_{w'}\left\{ (t\epimult \varphi)(w')+\frac{1}{2t\gamma}\norm{w'-z}^2 \right\}
		={}
		\left( (t\epimult\varphi)\square \frac{1}{2t\gamma}\norm{\cdot}^2 \right)(z).
	\end{multline*}
	Due to \cite[Lem.\ 1]{FriedlanderGoodwinHoheisel2023}, we have $t\epimult\varphi\epito\varphi_\infty$
	as $t\downtoneq 0$.
	Furthermore, we can exploit $\nicefrac{1}{2t\gamma}\norm{\cdot}^2=t\epimult \nicefrac{1}{2\gamma}\norm{\cdot}^2$ and
	\cite[Lem.\ 3.5]{BurkeHoheisel2017} in order to find $\nicefrac{1}{2t\gamma}\norm{\cdot}^2\epito\indicator_{\{0\}}$
	as $t\downtoneq 0$.
	Noting that we have $K_\infty=K$ for each closed, convex cone $K\subseteq\YY$,
	\begin{equation*}
		(\epi \varphi_\infty)_\infty \cap \left( -(\epi\indicator_{\{0\}})_\infty \right)
		=
		((\epi \varphi)_\infty)_\infty \cap \left( -(\{0\}\times\R_+)_\infty \right)
		=
		(\epi \varphi)_\infty \cap (\{0\}\times\R_-)
	\end{equation*}
	is obtained, and the latter intersection only contains the origin.
	Indeed, if $(0,-s)\in(\epi \varphi)_\infty$ for some $s>0$, then the definition of the recession cone yields
	$(z,\varphi(z)-s)\in\epi \varphi$, i.e., $\varphi(z)\leq \varphi(z)-s$ for each $z\in\dom\varphi$
	which is a contradiction.
	Thus, we can apply \cite[Thm~4]{McLindenBergstrom1981} which yields
	\begin{equation*}
		t\epimult\varphi^\gamma 
		=
		(t\epimult\varphi)\square \frac{1}{2t\gamma}\norm{\cdot}^2
		\epito
		\varphi_\infty \square \indicator_{\{0\}}
		=
		\varphi_\infty
	\end{equation*}
	as $t\downtoneq 0$.
	Observing that $0\in\dom \varphi^\gamma=\YY$ holds, \cite[Thm~8.5]{Rockafellar1970} yields
	\[
		(\varphi^\gamma)_\infty(z) = \lim_{t\downarrow 0}(t\epimult\varphi^\gamma)(z)
	\]
	for each $z\in\YY$.
	Together with the definition of epi-convergence, we have, for each $z\in\YY$,
	\begin{equation*}
		\varphi_\infty(z)
		\leq
		\liminf_{t\downtoneq 0} (t\epimult\varphi^\gamma)(z)
		= 
		\lim_{t\downtoneq 0}(t\epimult\varphi^\gamma)(z)
		=
		(\varphi^\gamma)_\infty(z)
		\leq
		\varphi_\infty(z),
	\end{equation*}
	where the last estimate follows by definition of the recession function 
	and the fact that $\varphi^\gamma$ approximates $\varphi$ from below.
	This completes the proof.
\end{proof}

\end{document}